\newcommand{\N}{\ensuremath{\mathbb{N}}}
\newcommand{\Z}{\ensuremath{\mathbb{Z}}}
\newcommand{\Q}{\ensuremath{\mathbb{Q}}}
\newcommand{\C}{\ensuremath{\mathbb{C}}}
\newcommand{\X}{\ensuremath{\mathbb{X}}}
\newcommand*{\defeq}{\mathrel{\vcenter{\baselineskip0.5ex \lineskiplimit0pt
			\hbox{\scriptsize.}\hbox{\scriptsize.}}}%
	=}
\theoremstyle{plain}
\newtheorem{theorem}{Theorem}[section]
\newtheorem{prop}[theorem]{Proposition}
\newtheorem{defin}[theorem]{Definition}
\newtheorem{lemma}[theorem]{Lemma}
\newtheorem{corol}[theorem]{Corollary}
\newtheorem{rem}[theorem]{Remark}
\newtheorem{conj}[theorem]{Conjecture}
\numberwithin{equation}{subsection}
\title{Periodic self maps and thick ideals in the stable motivic homotopy category over \C\ at odd primes}
\author{Sven-Torben Stahn}
\begin{document}
\maketitle 
\begin{abstract}In this article we study thick ideals defined by periodic self maps in the stable motivic homotopy category over \C. In addition, we extend some results of Ruth Joachimi about the relation between thick ideals defined by motivic Morava K-theories and the preimages of the thick ideals in the stable homotopy category under Betti realization.
\end{abstract}
\setcounter{tocdepth}{2}
\tableofcontents 

\section{Introduction}
There are two famous results by Hopkins and Smith in \cite{HS} that provide a complete description of the thick subcategories in the stable homotopy category of finite topological spectra.
\begin{defin}
	A thick subcategory of a tensor triangulated category is a nonempty, full, triangulated subcategory that is closed under retracts. A thick ideal is a thick subcategory that is closed under tensoring with arbitrary objects.
\end{defin}
The thick subcategory theorem states that if we localize at a prime \(l\) the thick subcategories (in fact thick ideals) of the category \(\mathcal{SH}_{(l)}^{fin}\) are given by a chain 
\[\mathcal{SH}_{(l)}^{fin}=\mathcal{C}_0 \supsetneq \mathcal{C}_1 \supsetneq \mathcal{C}_2 \supsetneq ... \supsetneq \mathcal{C}_\infty=\{0\}\]
and each thick ideal \(\mathcal{C}_{i+1}, 0\leq i < \infty\), is characterized by the vanishing of the \(i\)-th Morava K-theory \(K(i)\), where \(K(0)=H\Q\) by convention. The periodicity theorem states that these thick subcategories can also be described by the property of admitting a special kind of periodic self map; a so called \(v_n\)-self map that induces an isomorphism in \(K(n)\) and nilpotent maps in \(K(m),\ m\neq n\). Using the older Nilpotence theorem of Devinatz, Hopkins and Smith in \cite{DHS}, Hopkins and Smith showed that the full subcategory \(\mathcal{C}_{v_n}\) of finite spectra admitting such self maps is in fact thick, and thus equal to one of the categories \(\mathcal{C}_i\). For algebraic reasons (see \cite[3.3.11]{RAV2}) the category \(\mathcal{C}_{v_n}\) must be nested in the following way: \[\mathcal{C}_{n+1}\subset\mathcal{C}_{v_n}\subset\mathcal{C}_{n}\]
Therefore, by the thick subcategory theorem, the existence of at least one spectrum \(X_n\) in \(\mathcal{C}_{n}\) admitting such a self map proves the equality \(\mathcal{C}_{v_n}=\mathcal{C}_{n}\). Using an earlier construction of Smith, they prove that there is indeed such a spectrum \(X_n\) that admits a \(v_n\)-self map.\\

The fact that the motivic Hopf map \(\eta\) is not nilpotent suggests that the picture looks very different in the motivic context, even over the complex numbers. Ruth Joachimi showed in her dissertation that algebraic Morava K-theories, originally defined by Borghesi, define a similar chain of thick subcategories \(\mathcal{C}_{AK(n)}\) for odd primes over the base field \C (\cite[9.6.4]{JOA}), but also that there are a more thick ideals in the motivic homotopy category (\cite[Chapter 7]{JOA}). In addition, she relates the thick ideals \(\mathcal{C}_{AK(n)}\) to the thick ideals \(\text{thickid}(c\mathcal{C}_n)\) and \(R^{-1}(\mathcal{C}_n)\) provided by the classical thick ideals via the constant simplicial presheaf and Betti realization functors, respectively.\\

The purpose of this article is to explore the motivic equivalents of the constructions by Hopkins and Smith.
In Theorem \ref{ThickSubcat}, we prove that periodic motivic self maps defined by algebraic Morava K-theory define a thick subcategory, but we need to make use of a conjectural weakened version of a motivic nilpotence lemma. In Theorem \ref{SelfMapExample} we lift a construction by Hopkins and Smith in \cite{HS} to the motivic world to show that examples of these self maps exist. Finally, in the last two sections, we use some of our computations in the preceding sections to settle \cite[Conjecture 7.1.7.3]{JOA}. We furthermore provide a counterexample to the asserted inclusion \(\text{thickid}(c\mathcal{C}_2) \subset \mathcal{C}_{AK(1)}\) in \cite[Chapter 9, last section]{JOA} and we identify an error in \cite[Proposition 8.7.3]{JOA}, on which the assertion is based. The counterexample also proves that the inclusion \(\mathcal{C}_{AK(1)}\subset R^{-1}(\mathcal{C}_2)\) is actually proper and hence that the thick subcategories defined by algebraic Morava K-theories are distinct from the preimages of the topological thick ideals under Betti realization.\\

This research was originally part of my dissertation under supervision by Jens Hornbostel, to who I am grateful for his support. It was conducted in the framework of the research training group
\emph{GRK 2240: Algebro-Geometric Methods in Algebra, Arithmetic and Topology},
which is funded by the DFG.

\section{Background}
We work in the motivic stable homotopy category \(\mathcal{SH}_\C\), whose objects are \(\mathbb{P}^1\)-spectra of motivic spaces over the base field \(\C\). The construction of this category is due to Voevodsky and Morel (see \cite{VOE} and \cite{MV}) and mimicks the construction  of the topological stable homotopy category, where smooth schemes take the place of topological spaces. There are two kinds of spheres in the motivic world, a simplicial and a geometric one; therefore suspensions, homotopy, homology and cohomology  are all not singly graded but bigraded; and there are two common conventions for how to grade them.  We index them according to the following convention:
\begin{defin}
	Define \(S^{1,0}\) as the \(\mathbb{P}^1\)-suspension spectrum of the simplicial sphere \((S^1,1)\) and \(S^{1,1}\) as the \(\mathbb{P}^1\)-suspension spectrum of \((\mathbb{A}^1-0,1)\). The suspension spectrum of \(\mathbb{P}^1\) is then equivalent to \(S^{2,1}\). Define 
	\[S^{p,q}\defeq(S^{1,0})^{\wedge(p-q)}\wedge (S^{1,1})^q.\]
	This relates to the other common notation of \(S^{\alpha} = S^{1,1}\) by \(S^{p,q}=S^{p-q+q\alpha}\).\\
	The motivic homotopy groups of a motivic spectrum \(X\in\mathcal{SH}_k\) are then defined as:
	\[
	\pi_{p,q}(X)\defeq[S^{p,q},X]_{\mathcal{SH}_k}
	\]
\end{defin}
\ \\
There is a topological realization functor \(R: \mathcal{SH}_\C \rightarrow \mathcal{SH}_{top}\) called Betti realization. There are many reviews of the construction and basic properties of this functor. We rely on the account in \cite[4.3]{JOA}.
Betti realization maps the suspension spectrum of a smooth scheme over \C\ to the suspension spectrum of the topological space of its complex points, endowed with the analytic topology. In particular the image of the motivic sphere \(S^{p,q}\) under Betti realization is the topological sphere \(S^p\). This functor is a strict symmetric monoidal left Quillen functor. 

Because Betti realization maps the motivic spheres to the topological ones, it induces maps on homotopy groups
\[R: \pi_{pq}(X)\rightarrow \pi_p(R(X))\]
for every motivic spectrum \(X\in\mathcal{SH}_\C\). Therefore for every motivic spectrum \(E\in\mathcal{SH}_\C\) it also induces maps \[R: E_{pq}(X)\rightarrow R(E)_p(R(X))\]
and
\[R: E^{pq}(X)\rightarrow R(E)^p(R(X))\] on homology and cohomology  associated to that spectrum. Betti realization has a strict symmetric monoidal right inverse
\[
c: \mathcal{SH}_{top} \rightarrow \mathcal{SH}_\C
\]
called the constant simplicial presheaf functor. It is a result of Levine(see \cite[Theorem 1]{LEV}) that \(c\) is not only faithful but also full.
\subsection{Cellular motivic spectra}
We intent to construct an example \(v_n\)-self map \(v: \X_n\rightarrow \X_n\) on the motivic equivalent of the space \(X_n\) used by Hopkins and Smith. This space is constructed as a retract of a finite cell spectrum. In classical topology, a retract of a finite cell spectrum is a finite cell spectrum again, but this does not necessarily need to be the case motivically. Therefore, we want to consider the slightly larger thick envelope \(\mathcal{SH}_{\C}^{qfin}\) (defined in \ref{finiteType}) of the subcategory of finite spectra in \(\mathcal{SH}_\C\) in the definition of motivic \(v_n\)-self maps and for the study of thick subcategories characterized by \(v_n\)-self maps. 
In contrast to classical algebraic topology, not all motivic spectra are cellular in the following sense:
\begin{defin}
\label{finiteType}
\begin{enumerate}
\item The category of cellular spectra \(\mathcal{SH}_k^{cell}\) in \(\mathcal{SH}_k\) is defined (c.f. \cite[Definition 2.1]{DI2}) as the smallest full subcategory that satisfies
\begin{itemize}
\item The spheres \(S^{p,q}\) are contained in the subcategory \(\mathcal{SH}_k^{cell}\).
\item If a spectrum \(X\) is contained in the subcategory \(\mathcal{SH}_k^{cell}\), then so are all spectra which are weakly equivalent to \(X\).
\item If \(X\rightarrow Y \rightarrow Z\) is a cofiber sequence and two of the three spectra are contained in the subcategory \(\mathcal{SH}_k^{cell}\), then so is the third.
\item The subcategory \(\mathcal{SH}_k^{cell}\) is closed under arbitrary colimits.
\end{itemize}
\item The subcategory of \emph{finite cellular spectra} \(\mathcal{SH}_k^{fin}\) in \(\mathcal{SH}_k\) is defined similarly as the smallest full subcategory that satisfies the first three conditions (see \cite[Definition 8.1]{DI2}).\\
\item We define the category of \emph{quasifinite cellular spectra} \(\mathcal{SH}_k^{qfin}\) as the smallest full triangulated subcategory of \(\mathcal{SH}_k\) that contains \(\mathcal{SH}_k^{fin}\) and is closed under retracts. The spectra in \(\mathcal{SH}_k^{qfin}\) are exactly finite cell spectra and their retracts, since the cofiber of two retracts of finite cell spectra is a retract of a finite cell spectrum by the octahedral axiom.
\item By \cite[Lemma 2.2]{ROEN} a motivic spectrum is cellular if and only if it admits a cell presentation, i.e. it can be built by successively attaching cells \(S^{s,t}\). A motivic cell spectrum \(X\) is called \emph{of finite type} if it admits a cell presentation with the following property: there exists a \(k\in \N \) such that there are no cells in dimensions satisfying \(s-t<k\) and such that there exist only finitely many cells in dimensions \((s+t,t)\) for each \(s\).
\end{enumerate}
\end{defin}
\ \\

\subsection{Completions}
For the sake of studying periodic self maps it is useful to consider one prime at a time, because these maps are detected by a collection of cohomology theories called Morava K-theory, which are defined with regard to a specific prime. In our case this prime will usually be odd, i.e. different from two. Topologically one can implement this by studying the localized or completed homotopy category via the tool of Bousfield localization at an appropiate Moore spectrum. Motivically this works as well (a discussion of this in the motivic setting can be found in \cite[Section 3]{OR}): We define the \(l\)-completed motivic homotopy category \(\mathcal{SH}_{k,l}^\wedge\) as the Bousfield localization of the category \(\mathcal{SH}_k\) at the mod-l Moore spectrum \(S/l\).\\

\begin{defin}
Let \(l\) be any prime number, and let $X$ be a motivic spectrum in \(\mathcal{SH}_k\).  The \emph{$l$-completion \(X^{\wedge}_l\) of $X$}  is the Bousfield localization of $X$ at the mod-\(l\) Moore spectrum \(S/l\). One can also describe this completion as:
\[X^{\wedge}_l\defeq L_{S/l}X \simeq \underset{\leftarrow}{\text{holim}}~ X/{l^n}\]
\end{defin}

\begin{defin}
We define  the subcategory \(\mathcal{SH}_{k,l}^{\wedge,cell}\) of \emph{\(l\)-complete cellular spectra} in \(\mathcal{SH}_{k,l}^\wedge\) as the full subcategory of \(l\)-completions of cellular spectra. Similarly, we define the subcategories \(\mathcal{SH}_{k,l}^{\wedge,fin}\) of \(l\)-complete finite cellular spectra and \(\mathcal{SH}_{k,l}^{\wedge,qfin}\) of \(l\)-complete quasifinite cellular spectra as the full subcategories of \(l\)-completions of spectra in \(\mathcal{SH}_{k}^{fin}\) and \(\mathcal{SH}_{k}^{qfin}\).
\end{defin}

\subsection{Motivic Spanier-Whitehead duality}
We are going to make use of Spanier-Whitehead duality when we study periodic self maps. The sources we want to quote use different, but equivalent definitions of dualizability, so we collect a number of basic definitions and facts about Spanier-Whitehead duality that we are going to use in one place. Our primary source is \cite[III.1]{LMS} where categorical duality is explained with great detail.\\
Consider a spectrum \(X\) in \(\mathcal{SH}_k\) or \(\mathcal{SH}_{k,l}^{\wedge}\). Both categories are closed symmetric monoidal categories (see \cite{JAR}), and therefore for an arbitrary motivic spectrum \(Y\) there exists a function spectrum \(F(X,Y)\). The unit and counit of the canonical tensor-hom adjunction are given by maps
\[\eta_{X,Y}: X\rightarrow F(Y,X\wedge Y)\]
and by the evaluation
\[\epsilon_{X,Y}: F(X,Y)\wedge X\rightarrow Y\]
and furthermore there is a natural pairing
\[
F(X,Y)\wedge F(X',Y')\rightarrow F(X\wedge X',Y\wedge Y')
\]
which provides a natural map
\[\nu_{X,Y}:F(X,S)\wedge Y \rightarrow F(X,Y)\]
by specializing to the case \(X'=Y=S\) and using the fact \(F(S,Y')\cong Y'\).

\begin{prop}
\label{SWEQUI}
Let  \(X\) be a spectrum in \(\mathcal{SH}_k\) or \(\mathcal{SH}_{k,l}^{\wedge}\). Then the following three conditions are equivalent:
\begin{enumerate}
\item The canonical map \[\nu_{X,Y}:F(X,S)\wedge Y \rightarrow F(X,Y)\] is an isomorphism for all spectra \(Y\).
\item The canonical map \[\nu_{X,X}:F(X,S)\wedge X \rightarrow F(X,X)\] is an isomorphism.
\item There is a coevaluation map \(coev: S\rightarrow X \wedge F(X,S)\) such that the diagram
\[
\xymatrix@C=1.5cm@R=1.5cm
{
S \ar[r]^{coev}\ar[d]^{\eta_{S,X}}&
X \wedge F(X,S)\ar[d]^T\\
F(X,X)&
F(X,S)\wedge X\ar[l]_{\nu_{X,X}}\\
}
\]
commutes, where \(T\) denotes the transposition map.
\end{enumerate}
\end{prop}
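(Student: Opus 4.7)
The plan is to prove the cyclic chain $(1)\Rightarrow(2)\Rightarrow(3)\Rightarrow(1)$, following the standard categorical argument for Spanier-Whitehead duality in a closed symmetric monoidal category. Since all three conditions are phrased purely in terms of the closed symmetric monoidal structure shared by $\mathcal{SH}_k$ and $\mathcal{SH}_{k,l}^{\wedge}$, the argument in \cite[III.1]{LMS} adapts without change, so the main task is to write down the explicit maps and check they fit together.

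The implication $(1)\Rightarrow(2)$ is immediate: specialize $Y=X$. For $(2)\Rightarrow(3)$ I would define the coevaluation directly. The unit $\eta_{S,X}\colon S\to F(X,S\wedge X)\cong F(X,X)$ is precisely the map corresponding to $\mathrm{id}_X$ under the tensor-hom adjunction. Since $\nu_{X,X}$ is invertible by hypothesis and the transposition $T$ is always invertible, one simply sets
\[
coev \defeq T^{-1}\circ \nu_{X,X}^{-1}\circ \eta_{S,X}\colon S\to X\wedge F(X,S),
\]
and the triangle in (3) commutes by construction.

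For $(3)\Rightarrow(1)$ I would construct an explicit two-sided inverse to $\nu_{X,Y}$ using the coevaluation from (3) and the evaluation $\epsilon_{X,Y}\colon F(X,Y)\wedge X\to Y$. Namely, define
\[
\mu_{X,Y}\colon F(X,Y)\cong S\wedge F(X,Y)\xrightarrow{coev\wedge \mathrm{id}} X\wedge F(X,S)\wedge F(X,Y)\xrightarrow{T\wedge \mathrm{id}} F(X,S)\wedge X\wedge F(X,Y)\xrightarrow{\mathrm{id}\wedge \epsilon_{X,Y}} F(X,S)\wedge Y.
\]
Then verify $\nu_{X,Y}\circ \mu_{X,Y}=\mathrm{id}$ and $\mu_{X,Y}\circ \nu_{X,Y}=\mathrm{id}$ by a naturality argument: the displayed triangle in (3) combined with the naturality of the pairing $F(X,Y)\wedge F(X',Y')\to F(X\wedge X',Y\wedge Y')$ (applied to reduce the $Y$-case to the $Y=S$-case) gives both composites as identities.

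The main obstacle is the diagram chase in the last implication: (3) provides only one of the two zigzag identities explicitly, so one has to verify the second by transporting the given identity through the naturality of $\nu$ and of the pairing, and through the associator and symmetry of $\wedge$. Nothing substantive beyond monoidal-category coherence enters, but keeping the implicit identifications $F(X,Y)\cong F(X,S\wedge Y)$ and the transposition $T$ straight is where care is required.
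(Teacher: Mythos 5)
Your proposal follows the same route as the paper: the cyclic chain $(1)\Rightarrow(2)\Rightarrow(3)\Rightarrow(1)$, with the coevaluation in $(2)\Rightarrow(3)$ defined by inverting $\nu_{X,X}$ and composing with $\eta_{S,X}$ and the symmetry, and the inverse of $\nu_{X,Y}$ in $(3)\Rightarrow(1)$ built from $coev$ and $\epsilon_{X,Y}$ exactly as in \cite[III.1.3]{LMS} (your composite differs from the paper's only by the order of the smash factors, so take care that $\epsilon_{X,Y}$ is applied to $F(X,Y)\wedge X$ in the correct order). This is essentially the paper's proof.
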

\begin{proof}
Clearly the first point implies the second. The second point implies the third, because one can define \(coev\) as the composite \(T\circ\nu_{X,X}^{-1}\circ\eta_{S,X}\). Finally, the third point implies the first (c.f. \cite[Proposition III.1.3(ii)]{LMS}) because one can define an inverse to \[\nu_{X,Y}:F(X,S)\wedge Y \rightarrow F(X,Y)\]
as the following composite: 
\begin{align*}
\nu_{X,Y}^{-1}: F(X,Y)&\cong F(X,Y)\wedge S \overset{id\wedge coev}\longrightarrow F(X,Y)\wedge X \wedge F(X,S) \overset{\epsilon_{X,Y}\wedge id}\longrightarrow\\
&\longrightarrow Y\wedge F(X,S)\overset{T}\longrightarrow F(X,S)\wedge Y
\end{align*}
\end{proof}

\begin{defin}
If \(X\) satisfies any of the preceding conditions, it is called \emph{strongly dualizable}.\\
The spectrum \(DX=F(X,S)\) is called the \emph{(motivic) Spanier-Whitehead dual} of \(X\). By definition, \(D\defeq F(-,S)\) is a contravariant functor \[D: \mathcal{SH}_k\rightarrow \mathcal{SH}_k\] and similarly \(D\defeq F(-,S^\wedge_{l})\) is a contravariant functor  \[D:\mathcal{SH}_{k,l}^{\wedge}\rightarrow \mathcal{SH}_{k,l}^{\wedge}\]
on the category of \(l\)-complete spectra. In fact, the obvious map \[F(-,S)\rightarrow F(-,S^\wedge_{l})\]
is a completion at \(l\), but we will neither need nor prove it.
\end{defin}

We will need the following general facts about strongly dualizable spectra, which are proven in \cite[Proposition III.1.3 (i, iii)]{LMS}:
\begin{lemma}
\begin{enumerate}
	\item If \(X\) is strongly dualizable, then \(DDX\cong X\).
	\item If \(X\) and \(Y\) are strongly dualizable, then the natural map \[F(X,S) \wedge F(Y,S) \rightarrow F(X \wedge Y, S)\]
	is an isomorphism. In particular, \(X \wedge Y\) is strongly dualizable.
\end{enumerate}
\end{lemma}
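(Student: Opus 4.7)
The plan is to carry over the formal arguments of \cite[Proposition III.1.3]{LMS} to our setting. Both $\mathcal{SH}_k$ and $\mathcal{SH}_{k,l}^{\wedge}$ are closed symmetric monoidal categories by \cite{JAR}, which is the only structural input required. The characterization of strong dualizability via an explicit coevaluation in Proposition \ref{SWEQUI}(iii) supplies all the computational data needed, so the proofs reduce to formal diagram chases in any closed symmetric monoidal category.

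For part (1), given strongly dualizable $X$ with coevaluation $coev_X: S \to X \wedge DX$ and evaluation $\epsilon_{X,S}: DX \wedge X \to S$, I would first verify the two triangle identities
\[(id_X \wedge \epsilon_{X,S}) \circ (coev_X \wedge id_X) = id_X \quad \text{and} \quad (\epsilon_{X,S} \wedge id_{DX}) \circ (id_{DX} \wedge coev_X) = id_{DX}.\]
The first follows directly by unwinding the defining diagram of $coev_X$ together with the definitions of $\eta_{S,X}$ and $\nu_{X,X}$; the second is then formal from the first by symmetry. These identities exhibit $X$ as a strong dual of $DX$ with the roles of $X$ and $DX$ swapped, and the uniqueness of strong duals (or direct comparison of coevaluations) produces the natural isomorphism $X \cong DDX$.

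For part (2), I would construct an explicit coevaluation for $X \wedge Y$ as the composite
\[coev_{X \wedge Y}: S \xrightarrow{coev_X} X \wedge DX \xrightarrow{id \wedge coev_Y \wedge id} X \wedge Y \wedge DY \wedge DX \xrightarrow{id \wedge T} X \wedge Y \wedge DX \wedge DY,\]
together with the analogous evaluation $DX \wedge DY \wedge X \wedge Y \to S$ assembled from $\epsilon_{X,S}$ and $\epsilon_{Y,S}$ after a transposition. A diagram chase invoking the triangle identities for $X$ and $Y$ individually establishes the triangle identities for this pair, so by Proposition \ref{SWEQUI} the smash $X \wedge Y$ is strongly dualizable and $DX \wedge DY$ serves as a strong dual. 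Uniqueness of duals then yields a canonical isomorphism $DX \wedge DY \cong D(X \wedge Y)$, and a short naturality argument identifies this abstract isomorphism with the canonical map built from the pairing $F(X,Y) \wedge F(X',Y') \to F(X \wedge X', Y \wedge Y')$ stated preceding Proposition \ref{SWEQUI}.

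The main obstacle is purely bookkeeping: keeping the associators and transposition isomorphisms consistent throughout the diagrams and verifying that all coherence pentagons and hexagons commute. Nothing in the argument uses anything beyond the axioms of a closed symmetric monoidal category, so the proof works uniformly in $\mathcal{SH}_k$ and in $\mathcal{SH}_{k,l}^{\wedge}$, where one only needs to replace the unit $S$ by the completed unit $S_l^{\wedge}$ throughout.
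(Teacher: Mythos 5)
Your argument is correct and is essentially the standard formal proof of \cite[Proposition III.1.3 (i), (iii)]{LMS}, which is exactly what the paper relies on: the paper gives no proof of its own and simply cites that result. Reconstructing the triangle identities from the coevaluation of Proposition \ref{SWEQUI}(3) and assembling the coevaluation for \(X\wedge Y\) from those of \(X\) and \(Y\) is precisely the route taken there, so your proposal matches the intended argument.
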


The spectrum \(DX\wedge X\) has the structure of a homotopy ring spectrum by the same arguments as in\cite[Proof of Corollary 5.1.5]{RAV2}:
\begin{rem}
	\label{SWRingSpectrum}
	If \(X\) is strongly dualizable, then the unit map \[e: S \overset{\eta_{S,X}}{\longrightarrow} F(X,X) \cong F(X,S) \wedge X=DX\wedge X\]
	and the multiplication map 
	\[ \mu: DX \wedge X \wedge DX \wedge X \overset{D(e)}\longrightarrow DX\wedge S \wedge X \cong DX \wedge X\]
	endow \(DX\wedge X\) with the structure of motivic homotopy ring spectrum (in fact an \(A_\infty\)-structure, but we are not going to use or prove it), where we use
	\[X\wedge DX\cong DDX\wedge DX=D(DX\wedge X)\]
	in the definition of \(D(e)\).
\end{rem}

\begin{lemma}
\label{DCofib} The functor \(D\) maps cofiber sequences to cofiber sequences, and the full subcategory of strongly dualizable spectra in \(\mathcal{SH}_k\) is thick.
\end{lemma}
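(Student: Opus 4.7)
The plan is to treat the two assertions separately and reduce each to a formal consequence of the closed symmetric monoidal triangulated structure on $\mathcal{SH}_k$ together with the characterization of strong dualizability provided by Proposition~\ref{SWEQUI}(1).

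For the first assertion I would invoke the general fact that in a closed symmetric monoidal triangulated category the contravariant internal hom $F(-,Y)$ is exact, in the sense that it sends distinguished triangles to distinguished triangles (up to the canonical rotation). For $\mathcal{SH}_k$ this is a standard consequence of the closed symmetric monoidal stable model structure and can be cited rather than reproved; specialising to $Y=S$ yields the claim.

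For the thick subcategory statement I would verify nonemptiness together with the two closure conditions. The sphere $S$ is strongly dualizable because $\nu_{S,W}$ identifies with the canonical isomorphism $W\cong W$, so the subcategory is nonempty. For closure under distinguished triangles, fix an arbitrary $W$ and view $\nu_{-,W}\colon F(-,S)\wedge W \to F(-,W)$ as a natural transformation between two contravariant triangulated functors in $X$; the source is triangulated because $F(-,S)$ is triangulated by the first part and smashing with $W$ preserves distinguished triangles, and the target is triangulated by the first part applied with $W$ in place of $S$. Given a distinguished triangle $X\to Y\to Z$ with two of its entries strongly dualizable, naturality yields a morphism of distinguished triangles in which two of the three components $\nu_{X,W}$, $\nu_{Y,W}$, $\nu_{Z,W}$ are isomorphisms, and the triangulated five lemma forces the third to be an isomorphism as well. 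Since $W$ was arbitrary, the remaining entry is strongly dualizable. Closure under retracts is immediate from naturality: if $X$ is a retract of a strongly dualizable $Y$, then $\nu_{X,W}$ is a retract of the isomorphism $\nu_{Y,W}$ in the arrow category and hence is itself an isomorphism.

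No step in this plan is technically difficult. The only substantive input is the exactness of $F(-,S)$, which follows from the closed symmetric monoidal stable structure on $\mathcal{SH}_k$, so the main obstacle is really just to record the five-lemma and retract arguments carefully rather than to overcome any genuine difficulty.
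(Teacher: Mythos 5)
Your proposal is correct and follows essentially the same route as the paper: the first assertion is obtained by citing exactness of the internal hom $F(-,S)$ coming from the closed symmetric monoidal stable (model) structure, and closure under retracts is the same naturality argument (a retract of the isomorphism $\nu_{Y,W}$ is an isomorphism), which the paper records as an explicit diagram chase. In fact your write-up is slightly more complete than the paper's: the paper asserts that ``we only need to show'' closure under retracts and leaves the two-out-of-three closure under distinguished triangles implicit, whereas you supply it correctly by viewing $\nu_{-,W}\colon F(-,S)\wedge W\to F(-,W)$ as a natural transformation of triangulated functors and applying the five lemma for each $W$; this is exactly the missing step and is worth stating.
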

\begin{proof}
For the first statement, let \(X\rightarrow Y \rightarrow Z\) be a cofiber sequence. Because \(\mathcal{SH}_k\) is the homotopy category of a pointed monoidal model category, the functor \(F(-,A)\) maps cofiber sequences to fiber sequences for any \(A\) in \(\mathcal{SH}_k\) (c.f. \cite[6.6]{HOV}). In particular this is true for \(D(-)=F(-,S)\). Because \(\mathcal{SH}_k\) is stable, fiber and cofiber sequences agree, and \(DZ \rightarrow DY \rightarrow DX\) is a cofiber sequence again.\\
For the second statement we only need to show that a retract of a strongly dualizable spectrum is again strongly dualizable, so let \(A\) be a retract of a strongly dualizable spectrum \(X\). Note that by the first point of \ref{SWEQUI} we have to show that the canonical map
\[
F(A,S)\wedge Y \rightarrow F(A,Y)
\]
is an isomorphism for all motivic spectra \(Y\), and we already now this statement is true if we replace \(A\) with \(X\). But this follows immediately from the following diagram:
\[
\xymatrix{
F(X,S)\wedge Y \ar @`{(-10,10),(10,10)}^{id} \ar[r]^\cong\ar[d]&F(X,Y)\ar[d]\ar @`{(15,10),(35,10)}^{id}\\
F(A,S)\wedge Y \ar[r]\ar@/^/[u]& F(A,Y)\ar@/_/[u]\\
}
\]
\end{proof}

\begin{lemma}
	\label{CellDualizable}
All spectra in \(\mathcal{SH}_\C^{qfin}\) are strongly dualizable in \(\mathcal{SH}_\C\), and \(\mathcal{SH}_\C^{qfin}\) is closed under taking duals.\\
As a consequence, all spectra in \(\mathcal{SH}_{k,l}^{\wedge,qfin}\) are strongly dualizable in \(\mathcal{SH}_{\C(l)}\), and \(\mathcal{SH}_{k,l}^{\wedge,qfin}\) is closed under taking duals.
\end{lemma}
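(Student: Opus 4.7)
The plan is to invoke Lemma \ref{DCofib} after checking that the motivic spheres themselves are strongly dualizable. Each sphere $S^{p,q}$ is tensor-invertible with two-sided inverse $S^{-p,-q}$, so the identification $F(S^{p,q},S)\cong S^{-p,-q}$ makes condition (2) of Proposition \ref{SWEQUI} evident: the map $\nu_{S^{p,q},S^{p,q}}\colon S^{-p,-q}\wedge S^{p,q}\to F(S^{p,q},S^{p,q})$ is the canonical isomorphism with $S$. Equivalently one can read off condition (3) directly by taking the coevaluation to be the inverse of the canonical isomorphism $S^{p,q}\wedge S^{-p,-q}\cong S$.

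By Lemma \ref{DCofib} the strongly dualizable spectra form a thick subcategory of $\mathcal{SH}_\C$. Since it contains every $S^{p,q}$ and is closed under weak equivalences and cofiber sequences, it contains $\mathcal{SH}_\C^{fin}$ by the defining minimality property from \ref{finiteType}. Being thick, it is further closed under retracts, and therefore contains $\mathcal{SH}_\C^{qfin}$. This proves the first part of the assertion.

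For closure of $\mathcal{SH}_\C^{qfin}$ under $D$, I would check the three closure properties defining $\mathcal{SH}_\C^{qfin}$. The duals of spheres are again spheres, as $D(S^{p,q})\cong S^{-p,-q}$; the functor $D$ sends cofiber sequences to cofiber sequences by Lemma \ref{DCofib}; and $D$ sends retracts to retracts because any functor does, since $r\circ i=\mathrm{id}_A$ implies $D(i)\circ D(r)=\mathrm{id}_{DA}$. Hence $D$ carries $\mathcal{SH}_\C^{qfin}$ into itself.

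For the $l$-complete statement, the same strategy runs through in $\mathcal{SH}_{\C,l}^{\wedge}$. Its symmetric monoidal structure is induced from $\mathcal{SH}_\C$ by $l$-completing the smash product, so the $l$-completed spheres are again invertible and in particular strongly dualizable, now with respect to the completed unit $S^{\wedge}_l$ used in the definition of $D$ on the completed category. The proof of Lemma \ref{DCofib} applies verbatim in $\mathcal{SH}_{\C,l}^{\wedge}$, since it only uses closed symmetric monoidal structure and stability of the triangulated category; the thickness-and-retract argument of the previous paragraphs then carries over. The main point that requires care—and the place I would expect any real obstacle to appear—is precisely this bookkeeping: ensuring that all function spectra and smash products are interpreted in the completed category and that strong dualizability is therefore tested against $S^{\wedge}_l$ rather than $S$. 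Modulo this, the completed case is formally identical to the uncompleted one.
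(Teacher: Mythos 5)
Your argument is correct, and its overall skeleton coincides with the paper's: establish strong dualizability on a generating class, propagate it through \(\mathcal{SH}_\C^{qfin}\) using the thickness of the dualizable spectra from Lemma \ref{DCofib}, and prove closure under \(D\) by cellular induction on the spheres plus the observation that \(Ds\circ Dr=\mathrm{id}_{DX}\) exhibits \(DX\) as a retract of \(DF\). The one genuine difference is the base case: the paper does not verify dualizability of the spheres at all, but instead cites the fact that finite cell spectra are compact and that compact objects of \(\mathcal{SH}_\C\) are dualizable (\cite[Remark 4.1]{NOS} or \cite[5.2.7]{JOA}). You instead check the spheres directly via their tensor-invertibility, \(S^{p,q}\wedge S^{-p,-q}\simeq S\), which makes condition (3) of Proposition \ref{SWEQUI} immediate. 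Your route is more self-contained and avoids importing the nontrivial ``compact implies dualizable'' statement, which does not hold in an arbitrary tensor triangulated category; the paper's route is shorter by citation. Note also that your direct verification for every \(S^{p,q}\) is genuinely needed here: a thick subcategory is automatically closed under the triangulated shift \(\wedge S^{1,0}\) but not under the Tate twist \(\wedge S^{0,1}\), so one cannot reduce to the unit \(S\) alone. Your remark on the \(l\)-complete case --- that everything must be tested against the completed unit \(S^\wedge_l\) --- is more careful than the paper, which dispatches that half with ``as a consequence.''
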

\begin{proof}
Finite cell spectra are contained in the thick subcategory of compact spectra, and compact spectra are dualizable(See \cite[Remark 4.1]{NOS} or \cite[5.2.7]{JOA}). Therefore the thick subcategory generated by finite cell spectra is dualizable.\\
To show that \(\mathcal{SH}_\C^{qfin}\) is closed under taking duals, we only have to check that the duals of finite cell spectra and their retracts are in \(\mathcal{SH}_\C^{qfin}\) again by \ref{finiteType}. This is true for finite cell spectra by cellular induction, because the duals of suspensions of the sphere spectrum are suspensions of the sphere spectrum.
If \(X\) is a retract of a spectrum \(F \in \mathcal{SH}_\C^{qfin}\) such that \(DF\in \mathcal{SH}_\C^{qfin}\), with maps \(r:F\rightarrow X\) and \(s:X\rightarrow F\) such that \(r\circ s=id_X\), then \(DX\) is a retract of \(DF\in \mathcal{SH}_\C^{qfin}\) with maps \(Ds:DF\rightarrow DX\) and \(Dr:DX\rightarrow DF\) because \(Ds\circ Dr=id_{DX}\).
\end{proof}

\subsection{The motivic Steenrod algebra and the dual motivic Steenrod algebra}
One key ingredient for the Adams spectral sequence is knowlegde of the Steenrod algebra or of the dual Steenrod algebra. Motivically, the Steenrod Algebra was described by Voevodsky for fields of characteristic zero and later by Hoyois, Kelly and {\O}stv{\ae}r in positive characteristic. While some interesting phenomenas happen at the prime two, the motivic Steenrod algebra is more closely related to the classical topological Steenrod algebra at odd primes. To describe the motivic Steenrod algebra it is sufficient to know the coefficients of motivic coholomogy with \(\Z/l\Z\)-coefficients:

\begin{prop}For \(l\neq 2\) a prime and \(k=\C\) the coefficients \(H\Z/l^{**}\) of motivic cohomology are given as a ring by \[H\Z/l^{**}\cong \Z/l [\tau] \] with \(|\tau|=(0,1)\), and the image of \(\tau\) under Betti realization is nonzero.
\end{prop}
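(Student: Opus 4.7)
The plan is to derive both statements from Voevodsky's computation of the motivic cohomology of a point, combined with the Beilinson--Lichtenbaum comparison. First I would invoke the vanishing $H\Z/l^{p,q}(\mathrm{Spec}\,\C)=0$ for $p>q$ together with the Beilinson--Lichtenbaum theorem (now established through work of Voevodsky and Rost), which identifies $H\Z/l^{p,q}(\mathrm{Spec}\,\C)$ for $p\leq q$ with étale cohomology $H^p_{\text{et}}(\mathrm{Spec}\,\C;\mu_l^{\otimes q})$. Because $\C$ is algebraically closed of characteristic zero, this étale cohomology vanishes in positive degree and equals $\mu_l^{\otimes q}\cong \Z/l$ in degree zero, so $H\Z/l^{p,q}(\mathrm{Spec}\,\C)$ is $\Z/l$ exactly when $p=0$ and $q\geq 0$, and zero otherwise.

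To upgrade this additive description to the ring statement, I would fix a generator $\tau\in H\Z/l^{0,1}(\mathrm{Spec}\,\C)\cong \mu_l$, for instance the class corresponding to the primitive root of unity $e^{2\pi i/l}\in \C^\times$. Under the identification $H\Z/l^{0,q}(\mathrm{Spec}\,\C)\cong \mu_l^{\otimes q}$, the cup product corresponds to the natural pairing $\mu_l^{\otimes p}\otimes \mu_l^{\otimes q}\to \mu_l^{\otimes(p+q)}$, which sends $\tau^{\otimes p}\otimes \tau^{\otimes q}$ to $\tau^{\otimes(p+q)}$. Consequently $H\Z/l^{**}(\mathrm{Spec}\,\C)$ is the polynomial ring $\Z/l[\tau]$ on a single generator of bidegree $(0,1)$.

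For the Betti realization statement, I would rely on the standard identification $R(H\Z/l)\simeq H\Z/l_{\text{top}}$ of spectra (which ultimately reflects the comparison between motivic and singular cohomology of smooth complex varieties). This yields an induced map $R\colon H\Z/l^{0,1}(\mathrm{Spec}\,\C)\to H\Z/l^{0}(\mathrm{pt})=\Z/l$, and tracing $\tau$ through the chain
$$H\Z/l^{0,1}(\mathrm{Spec}\,\C)\cong H^0_{\text{et}}(\mathrm{Spec}\,\C;\mu_l)\cong \mu_l\hookrightarrow \C^\times,$$
one sees that $R(\tau)$ corresponds to the image of the chosen primitive root of unity under the topological identification $\mu_l\cong \Z/l$, hence is nonzero.

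The hard input --- the Beilinson--Lichtenbaum theorem plus the identification of the Betti realization of the motivic Eilenberg--MacLane spectrum with its topological counterpart --- already lives in the literature, so no genuinely new technical work is needed. The main obstacle is thus bookkeeping: verifying that the chain of isomorphisms sending $\tau$ to a generator of $\Z/l$ is natural with respect to Betti realization, and pinning down the sign/root-of-unity conventions so that the image is demonstrably nonzero rather than just noncanonical.
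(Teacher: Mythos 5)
Your proposal is correct and follows essentially the same route as the paper: vanishing of motivic cohomology for $p>q$, the Beilinson--Lichtenbaum/Bloch--Kato comparison with \'etale cohomology (the paper cites Geisser--Levine), the triviality of the Galois action over $\C$, and the identification of the product with the tensor pairing on $\mu_l^{\otimes q}$. You additionally spell out the Betti realization of $\tau$ via $R(H\Z/l)\simeq H\Z/l_{\mathrm{top}}$, which the paper's proof leaves implicit, but this is a completion of the same argument rather than a different approach.
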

\begin{proof}
We know that \(H\Z/l^{**}=0\) for $q<p$ ((cf. \cite[Theorem 3.6]{MVW})).\\ Let \(q\geq p\). Then there is an isomorphism from motivic to \'etale cohomology: \[H^{p,q}(Spec(k), \mathbb{Z}/l) \cong H_{\acute{e}t}^p(k,\mu_l^{\otimes q})\]
This isomorphism respects the product structure(\cite[1.2,4.7]{GL}).\\
The \'etale cohomology groups \(H_{\acute{e}t}^p(k,\mu_l^{\otimes q})\) can be computed as the Galois cohomology of the separable closure of the base field (in both cases the complex numbers) with coefficients in the $l$-th roots of unity. The action of the absolute Galois group $G$ is given by the trivial action if $k=\mathbb{C}$ and by complex conjugation if $k=\mathbb{R}$:
\[H_{\acute{e}t}^p(k,\mu_l^{\otimes q})\cong H(G, \mu_l^{\otimes q}(\C))\]
For $k=\mathbb{C}$, these groups all vanish for $p\neq 0$ by triviality of the Galois action, and they are $\mathbb{Z}/l$ in the degree $p=0$ for all $q\geq 0$. The multiplicative structure is given by the tensor product of the modules.\\
\end{proof}

\begin{rem}
We denote the image of \(\tau\) under \(H\Z/l_{**}=H\Z/l^{-*,-*}\) with the same name. This image has bidegree \(|\tau|=(0,-1)\).
\end{rem}

The motivic mod-$l$ Steenrod algebra over basefields of characteristic 0 has been computed by Voevodsky in \cite{VOE2}. The implications for the dual motivic Steenrod algebra are for example written down in the introduction of \cite{HKO}. In our special case it has the following shape:
\begin{prop}
Let $k=\C$ as above, and let $l$ be an odd prime. The dual motivic Steenrod algebra \(A_{**}\) and its Hopf algebroid structure can be described as follows:
\[A_{**}=H\Z/l_{**}[\tau_0,\tau_1,\tau_2,...,\xi_1,\xi_2,...]/(\tau_i^2=0)\]
Here \(|\tau_i|=(2l^i-1,l^i-1)\) and \(|\xi_i|=(2l^i-2,l^i-1)\).\\ The comultiplication is given by
\[\Delta(\xi_n) = \sum_{i=0}^n \xi_{n-i}^{l^i} \otimes \xi_i\] where \(\xi_0:=1\), and
\[\Delta(\tau_n) = \tau_n\otimes 1 + \sum_{i=0}^n \xi_{n-i}^{l^i} \otimes \tau_i\]
\end{prop}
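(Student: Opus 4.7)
The plan is to derive the description of $A_{**}$ by dualizing Voevodsky's computation of the motivic Steenrod algebra $A^{**}$ in \cite{VOE2} and then specializing the coefficient ring to $k=\C$. First, I would recall from \cite{VOE2} that over any characteristic zero base field, $A^{**}$ is a free left $H\Z/l^{**}$-module on the admissible monomials in the motivic reduced powers $P^{s}$ and the Bockstein $\beta$, and that its Hopf algebroid structure is explicitly determined. From this presentation, standard Milnor--Moore duality for graded connected Hopf algebroids (carried out in the motivic setting in the introduction of \cite{HKO}) produces the stated generators of $A_{**}$: polynomial generators $\xi_i$ dual to the iterated power operations $P^{l^{i-1}}\cdots P^{l} P^{1}$, and exterior generators $\tau_i$ dual to $P^{l^{i-1}}\cdots P^{l} P^{1}\beta$. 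The bidegrees $|\xi_i|=(2l^i-2,l^i-1)$ and $|\tau_i|=(2l^i-1,l^i-1)$ can be read off from $|P^s|=(2s(l-1),s(l-1))$ and $|\beta|=(1,0)$ together with the telescoping sum $1+l+\cdots+l^{i-1}=(l^i-1)/(l-1)$.

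Second, I would specialize to $k=\C$ by substituting the description $H\Z/l^{**}=\Z/l[\tau]$ from the previous proposition into the general formula. Over $\C$ there are no additional arithmetic classes (such as the class $\rho \in H\Z/l^{1,1}$ that appears over $\R$), so the general presentation collapses to the clean form
\[
A_{**}=H\Z/l_{**}[\tau_0,\tau_1,\ldots,\xi_1,\xi_2,\ldots]/(\tau_i^2=0)
\]
stated in the proposition.

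Third, I would verify the comultiplication formulas by inspecting Voevodsky's coproduct on $A^{**}$ and dualizing. The crucial point here is that at odd primes the motivic coproduct has exactly the same formal shape as the classical Milnor coproduct, with no $\tau$-dependent correction terms of the sort that obstruct the $p=2$ case. Dualizing the resulting formulas reproduces precisely
\[
\Delta(\xi_n)=\sum_{i=0}^n \xi_{n-i}^{l^i}\otimes \xi_i, \qquad \Delta(\tau_n)=\tau_n\otimes 1+\sum_{i=0}^n \xi_{n-i}^{l^i}\otimes \tau_i.
\]

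I expect the only real obstacle to be bookkeeping: carefully translating Voevodsky's indexing and sign conventions into the notation used here, and verifying rigorously that the correction terms present at $p=2$ truly vanish for odd $l$. Once that check is done, the proposition follows essentially by citation from \cite{VOE2,HKO} together with the coefficient computation of the previous proposition.
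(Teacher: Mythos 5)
The paper gives no proof of this proposition at all: it simply cites Voevodsky's computation of the motivic Steenrod algebra \cite{VOE2} and the description of its dual in the introduction of \cite{HKO}, specialized to $k=\C$ via the preceding coefficient computation. Your proposal is correct and follows exactly this route (citation to \cite{VOE2} and \cite{HKO} plus the coefficient ring $\Z/l[\tau]$), just with the dualization and degree bookkeeping spelled out more explicitly.
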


\subsection{Generalized motivic Adams spectral sequences}
We will use the homological motivic Adams spectral sequence to compute the coefficients of the \(l\)-completed motivic Brown-Peterson spectrum \(ABP^\wedge_l\).
The motivic Adams spectral sequence was inspired by Morels computation of the zeroth motivic stable stem(c.f. \cite{Mor}) and was used by Dugger and Isaksen for extensive computations over \(\C\) at the prime 2 (c.f. \cite{DI}). They also use additional information available in the MASS to deduce new information about the classical Adams spectral sequence. In other work Isaksen has extended these computations to the base field \(\mathbb{R}\). Generalized motivic Adams spectral sequences can be constructed for \(E\) an arbitrary motivic ring spectrum and \(X\) a motivic spectrum.
Define \(\bar{E}\) as the cofiber of the unit map \(S\rightarrow E\). Smashing the cofiber sequence \(\bar{E} \rightarrow S \rightarrow E\)  with  \(\bar{E}^s\wedge X\) yields cofiber sequences
\[\bar{E}^{\wedge(s+1)}\wedge X \rightarrow \bar{E}^{\wedge s}\wedge X \rightarrow E\wedge \bar{E}^{\wedge s}\wedge X\]
giving  rise to the following tower, called the canonical \(E_{**}\)-Adams resolution:
\[
\xymatrix
{
...\ar[r]\ar[d]&
\bar{E}^{\wedge(s+1)} \wedge X \ar[r]\ar[d] &
\bar{E}^{\wedge s} \wedge X \ar[r]\ar[d]&
...\ar[r]\ar[d]&
\bar{E}\wedge X \ar[r]\ar[d]&
X\ar[d]\\
...&
E\wedge \bar{E}^{\wedge(s+1)} \wedge X &
E\wedge \bar{E}^{\wedge s} \wedge X &
...&
E\wedge \bar{E} \wedge X &
E \wedge X\\
}
\]
The long exact sequences of homotopy groups associated to these cofiber sequences forms a trigraded exact couple
\[
\xymatrix{
\pi_{**}(\bar{E}^{\wedge*}\wedge X) \ar[rr]&
&
\pi_{**}(\bar{E}^{\wedge*}\wedge X)\ar[dl]\\
&
\pi_{**}(E\wedge\bar{E}^{\wedge*}\wedge X)\ar[lu]
&\\
}
\]
and thus give rise to a trigraded spectral sequence \(E_r^{s,t,u}(E,X)\) with differentials \(d_r: E_r^{s,t,u} \longrightarrow E_r^{s+r,t+r-1,u}\).\\

If one furthermore assumes that $E_{**}E$ is flat as a (left) module over the coefficients \(E_{**}\) it is possible to identify the \(E_2\)-term via homological algebra. In this case one can associate a flat Hopf algebroid to \(E\) (See \cite[Lemma 5.1]{NOS} for the statement and \cite[Appendix 1]{RAV} for the definition and basic properties of Hopf algebroids), and the category of comodules over this Hopf Algebroid is abelian and thus permits homological algebra. Because \(E_{**}E\)  is flat over \(E_{**}\) there is also an isomorphism (see \cite[Lemma 5.1(i)]{NOS})
\[\pi_{**}(E\wedge E \wedge X)\cong E_{**}(E)\underset{E_{**}}{\otimes}E_{**}(X)\]
allowing us to identify the long exact sequences of homotopy groups of the canonical \(E_{**}\)-Adams resolution with the (reduced) cobar complex \(C^*(E_{**}(X))\). For this reason the resolution is also referred to as the geometric cobar complex. The \(E_2\)- page of the $E$-Adams spectral sequence can then be described as:
\[E_2^{s,t,u}(E,X)=\text{Cotor}^{s,t,u}_{E_{**}(E)}(E_{**},E_{**}(X))\]
Here \(\text{Cotor}\) denotes the derived functors of the cotensor product in the category of \(E_{**}(E)\)-comodules and can be computed as the homology of the cobar complex \(C^*(E_{**}(X))\).

\begin{rem}
Assume now that \(k=\C\). Then Betti realization induces a map of spectral sequences \(R_{E,X}: E_r^{s,t,u}(E,X)\rightarrow E_r^{s,t}(R(E),R(X))\)
\end{rem}
This can be checked by going through the definitions: Because Betti realization preserves cofiber sequences and smash products, we have \(R( \bar{E})=\overline{R(E)}\), and the realization of the canonical \(E_{**}\)-Adams resolution for \(X\) is the canonical \(R(E)_{*}\)-Adams resolution for the topological spectrum \(R(X)\). If we consider the induced maps on the long exact sequences of homotopy groups defining the exact couple, we get the following commutative diagram:
\[
\xymatrix@C-=0.3cm{
...\ar[r]\ar[d]^R&
\pi_{p,*}(\bar{E}^{\wedge(s+1)}\wedge X)\ar[r]\ar[d]^R&
\pi_{p,*}(\bar{E}^{\wedge s}\wedge X)\ar[r]\ar[d]^R&
\pi_{p,*}(E\wedge\bar{E}^{\wedge s}\wedge X)\ar[r]\ar[d]^R&
...\ar[d]^R\\
...\ar[r]&
\pi_{p}(\overline{R(E)}^{\wedge(s+1)}\wedge R(X))\ar[r]&
\pi_{p}(\overline{R(E)}^{\wedge s}\wedge R(X))\ar[r]&
\pi_{p}(R(E)\wedge \overline{R(E)}^{\wedge s}\wedge R(X))\ar[r]&
...
}\]
In particular, Betti realization induces a map of exact couples and hence a map of spectral sequences.\\

Convergence of the spectral sequence has been studied for the case \(E=H\Z/l\) by Hu, Kriz and Ormsby in \cite[Theorem 1]{HKO}. It turns out that over the complex numbers, the spectral sequence will just converge to the $l$-completion \(X^{\wedge}_l\) of $X$, which one can either describe  as the Bousfield localization of $X$ at the mod-\(l\) Moore spectrum \(S/l\) or explicitely as:
\[X^{\wedge}_l\defeq L_{S/l}X \simeq \underset{\leftarrow}{\text{holim}}~ X/{l^n}\]
The homotopy groups of $X$ and its $l$-completion are related by the following short exact sequence(\cite[End of section 3]{OR}):
\[0 \rightarrow \text{Ext}^1(\Z/{l^\infty},\pi_{**}X)\rightarrow \pi_{**}X^\wedge_l \rightarrow \text{Hom}(\Z/{l^\infty},\pi_{*-1,*}X) \rightarrow 0\]
We will see later that for our case of interest, where \(X=ABP^\wedge_l\), the spectral sequence actually converges strongly because of a vanishing line.

\subsection{The algebraic Morava-K-theories \(AK(n)\)}
As before we work over the complex numbers, and the prime \(l\) will be odd. In particular this prime is implicit in the definition of the motivic Brown-Peterson-spectrum \(ABP\) and of the algebraic Morava-K-theory spectrum \(AK(n)\). In this section we show that the algebraic Morava-K-theory spectra \(AK(n)\) admit the structure of a commutative homotopy ring spectrum similar to their classical counterparts. These spectra were originally defined by Borghesi in \cite{Bor}. In addition we rely on the description provided in \cite[Def. 6.3.1]{JOA}:
\begin{defin}
The connective n-th motivic Morava K-theory is defined as 
\[Ak(n)=ABP/(v_0,...,v_{n-1},v_{n+1},v_{n+2},...)\]
and the n-th motivic Morava K-theory spectrum \(AK(n)\) is defined as: \[AK(n)=v_n^{-1}ABP/(v_0,...,v_{n-1},v_{n+1},v_{n+2},...)\]
In particular, both spectra are \(MGL_{(l)}\)-modules.
\end{defin}
\(AK(n)\)  and \(Ak(n)\) are genuinely motivic in the sense that they are derived from the spectrum representing algebraic cobordism. We will need some of the properties of \(AK(n)\) proven in \cite{JOA}, namely:
\begin{rem}
\begin{enumerate}
\item The Betti realization of the (connective) motivic Morava K-theory is the classical (connective) Morava K-theory (\cite[Lemma 6.3.2]{JOA}): 
\[R_{\C}(AK(n))=K(n)\]
and
\[R_{\C}(Ak(n))=k(n)\]
\item By \cite[Lemma 6.3.7]{JOA} the coefficients of algebraic Morava K-theory are given by: \[AK(n)_{**}=H\Z/(l)_{**}\underset{\Z/(l)}{\otimes}K_{*}\]
\item If \(X\) is a finite motivic cell spectrum such that \(H\Z/(l)^{**}(X)\) is free over the coefficients, then the motivic Adams spectral sequence for \(Y=Ak(n)\wedge X\) will converge strongly to \(Ak(n)_{**}(X)\). (See \cite[8.3.3]{JOA})
\end{enumerate}
\end{rem}

At least for odd primes, the topological spectra \(K(n)\) can be shown to be homotopy ring spectra. As remarked in \cite[Remark 6.3.3(6)]{JOA}, it is not known in general if the motivic Morava K-theory spectrum \(AK(n)\) can be endowed with the structure of a motivic homotopy ring spectrum. In the special case \(k=\C\), \(l\neq 2\) however Joachimi proved that the spectrum
\[AP(n)\defeq ABP/(v_0=l,v_1,...,v_{n-1}),\]
another quotient of \(MGL\), admits a unital homotopy associative product \cite[9.3]{JOA}, and with the work done by her it is no longer difficult to do the same for \(AK(n)\).\\

We want to use and extend the results in \cite[9.3]{JOA} and follow the notation used there to make comparison easier. In particular \(\eta\) will not denote the motivic Hopf map in this chapter, but a different map to be defined later. The only exception is the name of the prime \(l\), which is referred to as \(p\) in \cite{JOA}.\\
Let \(R \in \mathcal{SH}_k\) be a strictly commutative ring spectrum with multiplication map \(m: R\wedge R \rightarrow R\) and unit map \(i:S\rightarrow R\). The example that we have in mind is \(MGL_{(l)}\), which is a strictly commutative motivic ring spectrum by the reasoning given in the beginning of \cite[9.3]{JOA}.\\

Classically one can study the products on \(R\)-modules of the form \(R/x\) and use them to gain information about products on quotients of the form \(R/X\) where \(X\) is a countable regular sequence of homogeneous elements.
In contrast to the classical situation, the coefficients of \(MGL_{(l)}\) are not known, but the coefficients \(MGL_{(l)}/l\) are. Therefore motivically one has to consider \(R\)-modules of the form \(R/(x,y)\).\\

In the section immediately preceding \cite[9.3.7]{JOA} and in the proof of \cite[Lemma 9.3.8]{JOA} Joachimi constructs a product on quotients of this form and proves the following statement:
\begin{lemma}
\label{ProductLemma}
Let \(y\in\pi_{k',l'}(R)\) and let \(x\in \pi_{k,l}(R)\). Define the \(R\)-modules \(M\defeq R/y\) and \(N\defeq M/x\) and denote the structure map of \(M\) as \(\nu_M: R\wedge M\rightarrow M\). Write \(\eta'\) for the canonical map \[\eta': R\rightarrow M=R/y\] and \(\eta\) for the canonical map \[\eta: M\rightarrow N=R/(x,y)\].\\

If \(\pi_{2k'+1,2l'}(M)=0\) and \(\pi_{2k+1,2l}(N)=0\), there are maps of \(R\)-modules
\[\mu_M: M\wedge M\rightarrow M\]
\[\nu_{M,N}: M\wedge N \rightarrow M \]
\[\mu_N: N\wedge N \rightarrow N\]
making the following diagrams commute up to homotopy:
\begin{equation}
\label{ProductLemmaDiagram1}
\xymatrix{
R \wedge R \ar[rr]^{\eta'\wedge \eta'} \ar[dd]^{m} \ar[dr]^{1\wedge \eta'} & & M\wedge M \ar[dd]^{\mu_M}\\
 & R \wedge M \ar[dr]^{\nu_M} \ar[ur]^{\eta'\wedge 1}& \\
R \ar[rr]^{\eta'} & & M\\
}
\end{equation}

\begin{equation}
\label{ProductLemmaDiagram2}
\xymatrix{
	M \wedge M \ar[rr]^{\eta\wedge \eta} \ar[dd]^{\mu_M} \ar[dr]^{1\wedge \eta} & & N \wedge N \ar[dd]^{\mu_N} \\
	& M \wedge N \ar[dr]^{\nu_{M,N}} \ar[ur]^{\eta\wedge 1}& \\
	M \ar[rr]^{\eta} & & N\\
}
\end{equation}
In particular, if we choose the maps \(\eta'\circ i\) and \(\eta\circ\eta'\circ i\) as unit maps, \(\mu_M\) and \(\mu_N\) are unital products on \(M\) and \(N\) respectively.
\end{lemma}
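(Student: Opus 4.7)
The plan is a two-stage obstruction-theoretic construction of products via the defining cofiber sequences, first producing $\mu_M$ on $M=R/y$ and then bootstrapping to $\nu_{M,N}$ and $\mu_N$ on $N=M/x$. Throughout, I would work in the triangulated category of $R$-modules so that the relevant obstructions land in groups of the form $\pi_{**}(M)$ or $\pi_{**}(N)$ in precisely the bidegrees controlled by the hypotheses. To produce $\mu_M$, I would extend $\nu_M\colon R\wedge M\to M$ along $\eta'\wedge 1_M$ to a map $M\wedge M\to M$. Smashing the cofiber sequence $\Sigma^{k',l'}R\xrightarrow{y}R\xrightarrow{\eta'}M$ with $M$ identifies the obstruction as $\nu_M\circ(y\wedge 1_M)\colon\Sigma^{k',l'}R\wedge M\to M$, which under the free--forget adjunction for $R$-modules corresponds to the spectrum map $y\cdot_M\colon\Sigma^{k',l'}M\to M$ representing the $R$-action of $y$ on $M$. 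Since $\eta'$ is $R$-linear and $\eta'\circ y=0$, the composite $y\cdot_M\circ\Sigma^{k',l'}\eta'$ vanishes, so in $R$-modules $y\cdot_M$ factors as $\tilde y\circ\Sigma^{k',l'}\partial$ through the cofiber $\Sigma^{2k'+1,2l'}R$ of $\Sigma^{k',l'}\eta'$. Because $R$-linear maps out of $\Sigma^{2k'+1,2l'}R$ are classified by $\pi_{2k'+1,2l'}(M)=0$, every such $\tilde y$ is zero, forcing $y\cdot_M=0$ and the existence of $\mu_M$. Diagram~\eqref{ProductLemmaDiagram1} then follows from the defining identity $\mu_M\circ(\eta'\wedge 1_M)=\nu_M$ combined with the $R$-linearity identity $\nu_M\circ(1_R\wedge\eta')=\eta'\circ m$: the outer square becomes $\mu_M\circ(\eta'\wedge\eta')=\nu_M\circ(1\wedge\eta')=\eta'\circ m$.

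The construction of $\nu_{M,N}$ and $\mu_N$ would repeat the pattern one level up, using the cofiber sequence $\Sigma^{k,l}M\xrightarrow{x\cdot_M}M\xrightarrow{\eta}N$ defining $N$. First I would extend $\eta\circ\mu_M\colon M\wedge M\to N$ along $1_M\wedge\eta$ to obtain $\nu_{M,N}\colon M\wedge N\to N$; the obstruction $\eta\circ\mu_M\circ(1_M\wedge x\cdot_M)$ rewrites, using the $R$-bilinearity of $\mu_M$, as $\eta\circ x\cdot_M\circ\mu_M$, which vanishes because $\eta\circ x\cdot_M=0$ by the cofiber sequence. Next I would extend $\nu_{M,N}$ along $\eta\wedge 1_N$ to obtain $\mu_N\colon N\wedge N\to N$; the obstruction corresponds under the same free--forget argument to the $R$-linear map $x\cdot_N\colon\Sigma^{k,l}N\to N$, and applying the argument of the preceding paragraph with $(N,x,\eta)$ in place of $(M,y,\eta')$ shows that this map is classified by an element of $\pi_{2k+1,2l}(N)=0$ and therefore vanishes. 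Diagram~\eqref{ProductLemmaDiagram2} is then verified exactly as Diagram~\eqref{ProductLemmaDiagram1} with $(M,N,\mu_M,\eta)$ in the role of $(R,M,m,\eta')$, and unitality of $\mu_M$ with unit $\eta'\circ i$ follows from $\mu_M\circ((\eta'\circ i)\wedge 1_M)=\nu_M\circ(i\wedge 1_M)=1_M$, with $\mu_N$ and its unit $\eta\circ\eta'\circ i$ treated analogously.

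The main technical obstacle is arranging the $R$-bilinearity of $\mu_M$ needed to rewrite the obstruction for $\nu_{M,N}$ as something killed by the cofiber identity $\eta\circ x\cdot_M=0$: the extension procedure immediately yields only left $R$-linearity. The cleanest route is to construct $\mu_M$ first as an $R$-module map $M\wedge_R M\to M$, which is automatically $R$-balanced, and pass back to a spectrum-level map via the canonical projection $M\wedge M\to M\wedge_R M$; combined with the evident left $R$-linearity, $R$-balance gives right $R$-linearity as well. One must then check that this choice remains compatible with the identifying property $\mu_M\circ(\eta'\wedge 1_M)=\nu_M$, which it does because the $R$-unit map $R\to M$ descends compatibly to $R\wedge_R M\cong M$.
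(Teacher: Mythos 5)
The paper does not actually prove this lemma itself --- it is quoted from Joachimi's dissertation --- so I am comparing your argument against the cited construction and against the pattern the paper uses in its own Proposition \ref{CommutativityLemma}. Your first stage is correct and standard: the obstruction to extending \(\nu_M\) over \(\eta'\wedge 1_M\) is adjoint to the \(R\)-module map \(y\cdot_M\colon\Sigma^{k',l'}M\to M\), which factors through the cofibre \(\Sigma^{2k'+1,2l'}R\) of \(\Sigma^{k',l'}\eta'\) and hence is classified by \([\Sigma^{2k'+1,2l'}R,M]_R\cong\pi_{2k'+1,2l'}(M)=0\). The verification of diagram \ref{ProductLemmaDiagram1}, the unitality statements, your fix for the bilinearity of \(\mu_M\) via \(M\wedge_R M\), and the resulting construction of \(\nu_{M,N}\) are all fine.

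The gap is in the last extension. When you run ``the same argument with \((N,x,\eta)\) in place of \((M,y,\eta')\)'', the role of the base ring \(R\) is taken over by \(M\): the cofibre of \(\Sigma^{k,l}\eta\) is \(\Sigma^{2k+1,2l}M\), not \(\Sigma^{2k+1,2l}R\), so the factored map lives in \([\Sigma^{2k+1,2l}M,N]_R\), and this group is \emph{not} \(\pi_{2k+1,2l}(N)\) --- it sits in an exact sequence \(\pi_{2k+k'+2,2l+l'}(N)\to[\Sigma^{2k+1,2l}M,N]_R\to\pi_{2k+1,2l}(N)\), and the hypotheses do not kill the left-hand term. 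The free--forget step you appeal to is only available if one views \(\Sigma^{2k+1,2l}M\) as a free module over the \emph{homotopy} ring spectrum \(M\) (with the product \(\mu_M\) just constructed) and knows that homotopy-module maps out of such a free module are classified by homotopy groups of the target; that classification is exactly Lemma \ref{Joachimi933}, which your proof never invokes, and to apply it one must also check that the factored map \(\Sigma^{2k+1,2l}M\to N\) can be chosen to be a homotopy \(M\)-module map --- not automatic, since homotopy modules over a homotopy ring spectrum do not form a triangulated category. This is precisely the point where the two hypotheses play asymmetric roles (the lemma demands \(\pi_{2k+1,2l}(N)=0\), not \(\pi_{2k+1,2l}(R/x)=0\)), and it is the same device the paper deploys twice, via \(R\wedge R\)-modules, in its proof of Proposition \ref{CommutativityLemma}. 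As written, your argument establishes everything up to, but not including, the vanishing of the obstruction \(x\cdot_N\).
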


Furthermore the following result of Joachimi \cite[Lemma 9.3.8]{JOA} proves associativity, and we wish to extend it to include commutativity:
\begin{lemma}
	\label{HomotopyAssociativityLemma}
	If \(\pi_{k'+1,l'}(M)=\pi_{2k'+2,2l'}(M)=\pi_{3k'+3,3l'}(M)=0\), then \(\mu_M\) is homotopy associative.\\
	If furthermore \(\pi_{k+1,l}(N)=\pi_{2k+2,2l}(N)=\pi_{3k+3,3l}(N)=0\), then \(\mu_N\) is also homotopy associative.
\end{lemma}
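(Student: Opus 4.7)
The plan is to show that the associator
\[\delta \defeq \mu(\mu \wedge 1) - \mu(1 \wedge \mu)\]
for $\mu = \mu_M$ (respectively $\mu_N$) vanishes by obstruction theory along the defining cofiber sequence $\Sigma^{k',l'}R \xrightarrow{y} R \xrightarrow{\eta'} M$ (respectively $\Sigma^{k,l}M \xrightarrow{x} M \xrightarrow{\eta} N$). The crucial input is that $y\cdot 1_M = 0$ in $\pi_{k',l'}(M)$ (resp.\ $x \cdot 1_N = 0$), so that smashing the sequence with $M$ (resp.\ $N$) yields a split cofiber sequence and hence a decomposition $M \wedge M \simeq M \vee \Sigma^{k'+1, l'}M$ (resp.\ $N \wedge_M N \simeq N \vee \Sigma^{k+1, l}N$).

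For $\mu_M$, first verify $\delta_M \circ (\eta' \wedge 1 \wedge 1) = 0$: by diagram \eqref{ProductLemmaDiagram1}, $\mu_M(\eta' \wedge 1_M) = \nu_M$, and combining this with the $R$-linearity of $\mu_M$, $\mu_M(\nu_M \wedge 1_M) = \nu_M(1_R \wedge \mu_M)$, both terms of $\delta_M \circ (\eta' \wedge 1 \wedge 1)$ collapse to $\nu_M \circ (1_R \wedge \mu_M)$. Iterating the splitting gives $M^{\wedge 3} \simeq M \vee 2\Sigma^{k'+1, l'}M \vee \Sigma^{2k'+2, 2l'}M$ with $\eta' \wedge 1 \wedge 1$ corresponding to the first $M \wedge M$ summand, so $\delta_M$ lies in $[\Sigma^{k'+1, l'}M, M]_R \oplus [\Sigma^{2k'+2, 2l'}M, M]_R$. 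Applying $[-, M]_R$ to the defining cofiber sequence and using that $y$ acts nullhomotopically on $M$ yields the short exact sequence
\[0 \to \pi_{s+k'+1, t+l'}(M) \to [\Sigma^{s, t}M, M]_R \to \pi_{s, t}(M) \to 0,\]
and substituting $(s, t) = (k'+1, l')$ and $(2k'+2, 2l')$ shows that the three hypothesized vanishings of $\pi_{k'+1, l'}(M)$, $\pi_{2k'+2, 2l'}(M)$ and $\pi_{3k'+3, 3l'}(M)$ force both summands to be zero. Hence $\delta_M = 0$ and $\mu_M$ is homotopy associative.

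The argument for $\mu_N$ runs in parallel, with the cofiber sequence $\Sigma^{k, l}M \to M \to N$ in place of $\Sigma^{k', l'}R \to R \to M$ and diagram \eqref{ProductLemmaDiagram2} in place of diagram \eqref{ProductLemmaDiagram1}. The obstruction analysis should be carried out in the category of $M$-modules, where the analogous short exact sequence
\[0 \to \pi_{s+k+1, t+l}(N) \to [\Sigma^{s, t}N, N]_M \to \pi_{s, t}(N) \to 0\]
makes the three hypothesized vanishings on $\pi_{*, *}(N)$ exactly what is needed. The main obstacle I foresee is justifying that the obstruction analysis can indeed be localized to $M$-modules: the smash product $N \wedge_S N$ admits an additional splitting arising from the nullity of $y$ on $N$, which would introduce obstructions involving $(k'+1, l')$-shifts not controlled by the stated hypotheses on $\pi_{*, *}(N)$. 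These extra obstructions disappear precisely if $\mu_N$ factors through $N \wedge_M N$, i.e.\ if $\mu_N$ is $M$-bilinear; verifying this from the construction of $\mu_N$ in Lemma \ref{ProductLemma} is the key technical step.
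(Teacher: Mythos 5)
Your bookkeeping of which homotopy groups must vanish is exactly right --- the degrees $(k'+1,l')$, $(2k'+2,2l')$, $(3k'+3,3l')$ are precisely the degrees of the cells of $M^{\wedge 3}$ above the bottom one --- and your verification that the associator vanishes after restriction along $\eta'\wedge 1\wedge 1$ is correct. But the argument has a genuine gap, which you partly flag yourself and which is not confined to the $N$-case. The splitting $M\wedge M\simeq M\vee\Sigma^{k'+1,l'}M$ is a statement about the smash product \emph{over $R$}: for the smash product over the sphere (which is what $\wedge$ means in Lemma \ref{ProductLemma} and throughout this section), smashing $\Sigma^{k',l'}R\to R\to M$ with $M$ gives a cofiber sequence with terms $\Sigma^{k',l'}R\wedge M$ and $R\wedge M$, so at best $M\wedge M\simeq (R\wedge M)\vee\Sigma^{k'+1,l'}(R\wedge M)$, and the resulting obstruction groups are of the form $[\Sigma^{a,b}R\wedge M,M]$, not $[\Sigma^{a,b}M,M]_R$. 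To obtain your decomposition one must pass to $M\wedge_R M\wedge_R M$, and for the associator to descend there one needs $\mu_M$ to be $R$-bilinear (balanced over $R$); Lemma \ref{ProductLemma} only supplies $R$-linearity in one variable together with diagram \ref{ProductLemmaDiagram1}, and this descent is exactly the ``key technical step'' you leave open. (A smaller point: the splitting itself requires more than $\eta'_*(y)=0$ --- one needs the multiplication-by-$y$ map to vanish in $[\Sigma^{k',l'}M,M]_R$, which holds here only because $\pi_{2k'+1,2l'}(M)=0$ is a standing hypothesis of Lemma \ref{ProductLemma}; compare $2\cdot\mathrm{id}_{S/2}\neq 0$ for the mod $2$ Moore spectrum.)

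The proof this paper relies on (Joachimi's \cite[Lemma 9.3.8]{JOA}, whose technique is mirrored in the proof of Proposition \ref{CommutativityLemma} above) avoids relative smash products entirely. One first checks that the associator vanishes after restriction along $\eta'\wedge\eta'\wedge\eta'$, using associativity of $m$, $R$-linearity of $\mu_M$ and diagram \ref{ProductLemmaDiagram1}; one then peels off one smash factor at a time via the cofiber sequences obtained by smashing $\Sigma^{k',l'}R\to R\to M$ into a single coordinate. At each of the three stages the obstruction is a map of $R^{\wedge j}$-modules out of a \emph{free} module $S^{a,b}\wedge R^{\wedge j}$ into $M$ (an $R^{\wedge j}$-module via iterated multiplication), hence is classified by $\pi_{a,b}(M)$ by Lemma \ref{Joachimi933}, with $(a,b)$ running through the three degrees above. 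To repair your write-up, either replace the wedge decomposition by this filtration argument, or actually establish the $R$-bilinearity of $\mu_M$ (and the $M$-bilinearity of $\mu_N$), which does not follow from the construction as recorded in Lemma \ref{ProductLemma}.
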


We need the following lemma of Joachimi \cite[Lemma 9.3.3]{JOA} in the proof of commutativity:
\begin{lemma}
	\label{Joachimi933}
	Let \(R'\) be a (homotopy) ring spectrum, \(M'\) a left \(R'\)-module, and \(\pi_{k,l}(M')=0\). Then any \(R'\)-module map \(\psi:S^{k,l}\wedge R' \rightarrow M'\) is homotopically trivial.
\end{lemma}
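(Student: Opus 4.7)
The plan is to exploit that $S^{k,l}\wedge R'$ is the free left $R'$-module on $S^{k,l}$: an $R'$-linear map out of it is determined up to homotopy by its restriction along the unit $i\colon S\to R'$, and this restriction lies in the group $\pi_{k,l}(M')$, which vanishes by hypothesis.

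First I would restrict $\psi$ along the unit to obtain
\[
\bar\psi\colon S^{k,l}\xrightarrow{\cong} S\wedge S^{k,l}\xrightarrow{i\wedge\mathrm{id}} R'\wedge S^{k,l}\cong S^{k,l}\wedge R'\xrightarrow{\psi}M'.
\]
By assumption $\bar\psi$ represents an element of $\pi_{k,l}(M')=0$ and is therefore null-homotopic.

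Next I would reconstruct $\psi$ up to homotopy from $\bar\psi$ via the action map. Writing $S^{k,l}\wedge R'\cong R'\wedge S^{k,l}$ and equipping it with the left $R'$-module structure coming from $m\wedge\mathrm{id}_{S^{k,l}}\colon R'\wedge R'\wedge S^{k,l}\to R'\wedge S^{k,l}$, the $R'$-linearity of $\psi$ gives
\[
\psi\circ(m\wedge\mathrm{id}_{S^{k,l}})=\nu_{M'}\circ(\mathrm{id}_{R'}\wedge\psi)\colon R'\wedge R'\wedge S^{k,l}\longrightarrow M'.
\]
Precomposing both sides with $\mathrm{id}_{R'}\wedge i\wedge\mathrm{id}_{S^{k,l}}$ and applying the right unit axiom $m\circ(\mathrm{id}_{R'}\wedge i)\simeq\mathrm{id}_{R'}$ collapses the left-hand side to $\psi$ and turns the right-hand side into $\nu_{M'}\circ(\mathrm{id}_{R'}\wedge\bar\psi)$. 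Therefore $\psi\simeq\nu_{M'}\circ(\mathrm{id}_{R'}\wedge\bar\psi)$, and since $\bar\psi$ is null-homotopic, so is $\psi$.

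The only place demanding any real care is the bookkeeping of twist isomorphisms and the choice of left- versus right-module structure on $S^{k,l}\wedge R'$; once conventions are fixed, the proof is just the universal property of a free module together with the unit axiom. No deeper input is needed: neither homotopy commutativity of $R'$ nor any feature specific to the motivic setting.
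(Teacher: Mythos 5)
Your proof is correct: it is the standard free-module argument (restrict along the unit to land in \(\pi_{k,l}(M')=0\), then recover \(\psi\) up to homotopy from that restriction via the action map and the unit axiom), and the only care needed is indeed the bookkeeping of twists and of which diagrams commute only up to homotopy. The paper itself gives no proof of this lemma but simply cites \cite[Lemma 9.3.3]{JOA}, where essentially this same argument appears, so there is nothing in the paper for your approach to diverge from.
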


\begin{prop}
\label{CommutativityLemma}
Let R be a homotopy ring spectrum and commutative up to homotopy. Let M and N be quotient modules defined as in \ref{ProductLemma}.

If \(\pi_{k'+1,l'}(M)=\pi_{2k'+2,2l'}(M)=0\), then \(\mu_M\) is homotopy commutative.\\
If furthermore the homotopy groups of \(N\) satisfy  \(\pi_{k+1,l}(N)=\pi_{2k+2,2l}(N)=0\), then \(\mu_N\) is also homotopy commutative.
\end{prop}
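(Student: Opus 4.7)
The approach is to adapt Joachimi's proof of the homotopy associativity Lemma \ref{HomotopyAssociativityLemma} to the commutative setting. Since the argument for $\mu_N$ is parallel to that for $\mu_M$ (with $M, \eta', k', l'$ replaced by $N, \eta, k, l$ and the cofiber sequence $S^{k',l'}\wedge R \xrightarrow{y} R \xrightarrow{\eta'} M$ replaced by $S^{k,l}\wedge M \xrightarrow{x} M \xrightarrow{\eta} N$), I focus on $\mu_M$. Writing $T := T_{M,M}$ for the twist and $d := \mu_M - \mu_M \circ T$, the goal is to show $d \simeq 0$.

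First I would verify $d \circ (\eta' \wedge \eta') \simeq 0$: Diagram \eqref{ProductLemmaDiagram1} gives $\mu_M \circ (\eta' \wedge \eta') \simeq \eta' \circ m$, and combining the identity $(\eta' \wedge \eta') \circ T_{R,R} = T \circ (\eta' \wedge \eta')$ with the homotopy commutativity $m \simeq m \circ T_{R,R}$ yields $\mu_M \circ T \circ (\eta' \wedge \eta') \simeq \eta' \circ m$ as well. Next I would factor $\eta' \wedge \eta' = (\eta' \wedge 1_M) \circ (1_R \wedge \eta')$ and invoke the octahedral axiom to exhibit $\mathrm{cof}(\eta' \wedge \eta')$ as a two-step extension with subquotients $\Sigma^{k'+1, l'} R \wedge R$ (appearing twice) and $\Sigma^{2k'+2, 2l'} R \wedge R$. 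The class of $d$, already trivial on $R \wedge R$, is then determined by its induced images on these three pieces.

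The heart of the proof is showing that each such partial obstruction vanishes. Following Joachimi's strategy, I would treat $d$ as a (homotopy) $R$-module map by using the homotopy commutativity of $R$ to identify the left- and right-factor $R$-actions on $M \wedge M$ up to homotopy in the relevant range. This identification requires a preliminary obstruction argument: one shows $\nu_M^r \circ T_{R, M} \simeq \nu_M \colon R \wedge M \to M$, where $\nu_M^r := \mu_M \circ (1_M \wedge \eta')$, by noting that their difference pulls back to zero on $R \wedge R$ (using Diagram \eqref{ProductLemmaDiagram1} and the homotopy commutativity of $m$), factors through $\Sigma^{k'+1,l'} R\wedge R$, and is killed by Lemma \ref{Joachimi933} together with the hypothesis $\pi_{k'+1, l'}(M) = 0$. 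With this identification in hand, the uniqueness-of-extension step for $R$-module maps $M \wedge M \to M$ extending $\nu_M$ along $\eta'\wedge 1_M$ consumes the hypothesis $\pi_{2k'+2, 2l'}(M) = 0$ through a second application of Lemma \ref{Joachimi933} and the free $R$-module adjunction $[\Sigma^{a,b} R, M]_R \cong \pi_{a,b}(M)$.

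The main obstacle is this last step: carefully propagating the $R$-module structure through the iterated obstruction analysis so that each partial obstruction class lies in a single homotopy group $\pi_{**}(M)$. In particular, identifying $\mu_M \circ T$ with an $R$-module map (with respect to the same action as $\mu_M$) is where the homotopy commutativity of $R$ is essentially used, and this step parallels the delicate iterated arguments in Joachimi's proof of associativity; the bookkeeping is more subtle here because the swap $T$ does not preserve the chosen $R$-action on the nose.
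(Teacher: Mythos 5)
Your proposal is correct and follows essentially the same route as the paper: both reduce to showing \(\mu_M\circ(1-T)\simeq 0\), kill its restriction along \(\eta'\wedge\eta'\) using the homotopy commutativity of \(m\) together with Diagram \eqref{ProductLemmaDiagram1}, and then run a three-step obstruction argument whose obstructions lie in \(\pi_{k'+1,l'}(M)\) (twice) and \(\pi_{2k'+2,2l'}(M)\) and are killed by Lemma \ref{Joachimi933} applied to the ring spectrum \(R\wedge R\). The paper organizes the filtration by iterating the factorizations through the cofibers of \(1\wedge\eta'\) and \(\eta'\wedge 1\) explicitly rather than invoking the octahedral axiom on \(\eta'\wedge\eta'\), and it elides the left-versus-right module-action bookkeeping that you rightly flag, but the substance is identical.
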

\begin{proof}
The \(R\) module \(M=R/y\) is defined by the following cofiber sequence:
\[
\Sigma^{k',l'}R\overset{\phi}\rightarrow R \overset{\eta'}\rightarrow M \overset{\delta}{\rightarrow} \Sigma^{k'+1,l'}R
\]

Recall that \(m:R\wedge R\rightarrow R\) is the product on the ring spectrum \(R\). To show that the product \(\mu_M: M\wedge M\rightarrow M\) is commutative, it suffices to show \[\theta\defeq\mu_M\circ(1-T):M\wedge M\rightarrow M\]
is homotopic to the zero map, where \(T\) is the transposition map. The map \[\theta'\defeq (\eta' \wedge id_M)\circ \theta\]fits into the following diagram of R modules
\[
\xymatrix{
	R\wedge R \ar[dd]_{id_R\wedge \eta'}\ar[rr]^{m \circ (1-T) = 0} \ar[ddrr]^{0} && R \ar[dd]^{\eta'}\\\\
	R\wedge M \ar[dd]\ar[rr]^{\theta'}&&M\\\\
	\Sigma^{k'+1,l'}R\wedge R \ar@{-->}[uurr]^{\bar{\theta'}}
}
\]
which commutes by \ref{ProductLemmaDiagram1}. The top horizontal map is zero up to homotopy because \(m\) is homotopy commutative by assumption, and the first column is the cofiber sequence defining \(M\), smashed with \(R\). Together, this implies the existence of the dashed map \(\bar{\theta'}\).

Now \(R\) is a \(R \wedge R\) module via the product map \(m\) and we can consider this diagram as a diagram of \(R \wedge R\) modules. Then proposition \ref{Joachimi933}, applied to the ring spectrum \(R \wedge R\), implies that \(\bar{\theta'}\) is null homotopic by our assumptions on the homotopy groups of \(M\). Therefore \(\theta'\) is null homotopic as well. We then get the following commutative diagram for \(\theta\): \[
\xymatrix{
	&&R\wedge M \ar[dd]_{\eta'\wedge id_M}\ar[rr]^{\theta' = 0} \ar[ddrr]^{0} && M \ar@{=}[dd]\\\\
	&&M\wedge M \ar[dd]\ar[rr]^{\theta}&&M\\\\
	\Sigma^{k'+1,l'}R\wedge R \ar[rr]^{id_R \wedge \eta'} &&\Sigma^{k'+1,l'}R\wedge M \ar@{-->}[uurr]^{\bar{\theta}}\ar[rr] && \Sigma^{2k'+2,2l'}R\wedge R\ar@{.>}^{\tilde{\theta}}[uu]
}
\]
Once again the first column is a cofiber sequence, which implies the existence of the dashed map.
The composite \(\bar{\theta}\circ(id_R \wedge \eta')\) is null homotopic because this diagram is a diagram of \(R\wedge R\) modules again, so we can use the same argument as before. This implies the existence of the dotted map \(\tilde{\theta}\). This map also vanishes by the second condition on the homotopy groups of M, which in turn implies that \(\bar{\theta}\) is zero up to homotopy. Therefore \(\theta\) also vanishes, so \(\mu_M\) is homotopy commutative.

Because we used only the fact that \(R\) is a homotopy ring spectrum and not strict commutativity, and because diagram \ref{ProductLemmaDiagram2} in \ref{ProductLemma} commutes, we can then repeat the same proof with \(M\) replacing \(R\) and \(N\) replacing \(M\). Note that this would not have been possible if we worked over \(R\)-modules, because it is not clear that \(R/x\) is a strictly commutative ring spectrum again.
\end{proof}

\begin{lemma}
\label{CommutativityLemma2}
Let \(k=\C\) and \(l\neq 2\). The spectrum \(AP(n)\) admits a unital, homotopy associative and homotopy commutative product
\[\mu_{AP(n)}:AP(n)\wedge AP(n) \rightarrow AP(n)\]
and so do the spectra \(A_i=AP(n)/(v_{n+1},...,v_{n+i})\).
\end{lemma}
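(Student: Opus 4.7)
The plan is to iterate Lemmas \ref{ProductLemma} and \ref{HomotopyAssociativityLemma} together with the newly proved Proposition \ref{CommutativityLemma} along the regular sequence $(l, v_1, \ldots, v_{n-1}, v_{n+1}, \ldots, v_{n+i})$ in $\pi_{**}(MGL_{(l)})$, starting from the strictly commutative ring spectrum $R = MGL_{(l)}$. Joachimi already ran this scheme for associativity in \cite[Section~9.3]{JOA} to produce the unital, homotopy associative product on $AP(n)$; the new ingredient is that Proposition \ref{CommutativityLemma} is formulated for a merely homotopy commutative $R$, so the commutativity established at one stage is exactly what permits the next stage to proceed. Iterating through the first $n$ elements yields the product on $AP(n)$, and continuing with $v_{n+1}, \ldots, v_{n+i}$ gives the products on each $A_i$.

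At each inductive step, with $|y| = (k', l')$ and $|x| = (k, l)$ of the form $(2(l^a - 1), l^a - 1)$, the three lemmas together demand the vanishings $\pi_{k'+1, l'}(M) = \pi_{2k'+1, 2l'}(M) = \pi_{2k'+2, 2l'}(M) = \pi_{3k'+3, 3l'}(M) = 0$, and analogously for $N$. Since $k'$ and $k$ are even, the bidegrees with odd first coordinate ($k'+1$, $2k'+1$, $3k'+3$) vanish automatically, because $\pi_{**}(AP(n))$, $\pi_{**}(A_i)$ and all intermediate quotients are concentrated in bidegrees of even first coordinate (from Joachimi's explicit coefficient computations, or inductively from the long exact sequences of the defining cofibers, whose connecting shifts $|v_j|$ are in even first coordinate). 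The only subtle case is the even-first-coordinate bidegree $(2k'+2, 2l') = (4l^a - 2, 2l^a - 2)$: here $v_2(4l^a - 2) = 1$ since $l$ is odd, while every first-coordinate degree in $\pi_{**}(BP_{(l)})$ (and hence in $\pi_{**}(ABP)$ and all its quotients) is a nonnegative integer combination of the $2(l^j - 1)$, each of which has $v_2 \geq 2$. Hence such a bidegree is never attained, and the same argument settles $\pi_{2k+2, 2l}(N)$.

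The main technical obstacle is ensuring these vanishings at every stage of the iteration, not merely for $MGL_{(l)}$ itself. This propagates inductively: taking cofibers along multiplication by $v_j$ shifts first-coordinate degrees by the even quantity $2(l^j-1)$ of $2$-adic valuation $\geq 2$, so both the even-first-coordinate concentration and the congruence obstruction above are preserved under each quotient. With all the required hypotheses verified at every stage, iterated application of \ref{ProductLemma}, \ref{HomotopyAssociativityLemma}, and \ref{CommutativityLemma} produces the claimed unital, homotopy associative, and homotopy commutative products on $AP(n)$ and on each $A_i$.
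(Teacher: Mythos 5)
Your degree count (first coordinates of $\pi_{**}(ABP)$ and its quotients are sums of the $2(l^j-1)$, hence divisible by $4$, while the obstruction bidegrees $(k'+1,l')$, $(2k'+1,2l')$, $(2k'+2,2l')$, $(3k'+3,3l')$ have first coordinate odd or congruent to $2$ mod $4$) is exactly the argument the paper uses. But the scheme you wrap it in has a genuine gap. First, $MGL_{(l)}/(l,v_1,\dots,v_{n-1})$ is not $AP(n)$: the latter is a quotient of $ABP$, and to present it as a quotient of $MGL_{(l)}$ you must also kill the infinitely many remaining generators $x_i$, $i\neq l^j-1$, of the Lazard ring. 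You cannot repair this by starting the iteration at $R=ABP$, since Lemma \ref{ProductLemma} requires a \emph{strictly} commutative base (its products are built as maps of $R$-modules), and $ABP$ is not known to be one; nor can you iterate \ref{ProductLemma} by taking the output of one stage as the base of the next, since after one quotient the base is only a homotopy ring spectrum --- this is precisely the obstacle flagged at the end of the proof of \ref{CommutativityLemma}. Second, and decisively, in your iteration the obstruction groups are homotopy groups of quotients of $MGL_{(l)}$, not of $ABP$, and there your vanishing argument fails: $MGL_{(l)}/l_{**}$ contains all the Lazard generators $x_i$ in bidegree $(2i,i)$ together with their $\tau$-multiples, so e.g.\ $\tau x_1$ sits in bidegree $(2,0)=(2k'+2,2l')$ for the very first step $y=l$, $(k',l')=(0,0)$. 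Your claim that all intermediate quotients are concentrated in first degrees built from the $2(l^j-1)$ is a statement about $ABP$, and it is not inherited by $MGL_{(l)}$-quotients (nor is $\pi_{**}(MGL_{(l)})$ itself known, which is the reason the paper only ever forms quotients by pairs containing $l$).

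The missing ingredient is the base-change mechanism that the actual proof is built around: one writes $AP(n)\cong A\wedge_{MGL_{(l)}}MGL_{(l)}/(l,y)$ with $A=ABP/J'$, defines the product as the smash of the inductively given product on $A$ with the product on $N=MGL_{(l)}/(l,y)$ supplied by \ref{ProductLemma}, and then only needs the obstruction diagrams of \ref{HomotopyAssociativityLemma} and \ref{CommutativityLemma} to vanish after applying $(-)\wedge_{MGL_{(l)}}A$. This relocates every obstruction into groups of the form $\pi_{i,j}(M\wedge_{MGL_{(l)}}A)$ and $\pi_{i,j}(N\wedge_{MGL_{(l)}}A)$, i.e.\ into $ABP$-based homotopy, where your divisibility argument applies verbatim. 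With that mechanism in place your verification of the vanishing is correct; without it, the induction neither produces the right spectrum nor lands its obstructions in groups you can show are zero.
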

\begin{proof}
Except for the statement about commutativity, the first part of this lemma is the content of \cite[9.3.9]{JOA}. The essential argument in the proof of the cited lemma is as follows: if one has a sequence of elements \(J\subset R_{**}\) and one knows that \(A\defeq R/(J-\{x,y\})\) is a homotopy associative and commutative ring spectrum, then one can describe the product on \(R/J\cong A/(x,y) \cong A \underset{R}\wedge R/(x,y)\) by
\[(N\underset{R}\wedge A) \wedge (N\underset{R}\wedge A) \overset{\tau}\longrightarrow (N\wedge N)\underset{R}\wedge (A\wedge A)\overset{id_N\wedge id_N \wedge \mu_A}\longrightarrow N\wedge N\underset{R}\wedge A \overset{\mu_N \wedge id_A}\longrightarrow N\wedge A\]
and thus has to prove the vanishing of the obstruction groups to associativity only after application of \((-)\underset{R}\wedge A\) to the associativity diagram.\\
Now choose \(R=MGL_{(l)}\) and \(A=ABP\) and \(J\) such that \(MGL_{(l)}/J=AP(n)\). Then the relevant obstruction groups are trivial because for odd primes \(l\neq 2\), \(ABP_{**}\) is concentrated in bidegrees where the first degree is divisible by 4. We can then show that there is a homotopy associative product on \(AP(n)\) by induction; because \(AP(n)/(v_0,...,v_n)\), we only have to do finitely many steps,  and we can use the fact (see \cite[Lemma 9.3.7]{JOA}) that for any sequence \((l)\subset J'\):
\[
ABP/(J'\cup \{y\})\cong MGL_{(l)}/(l,y)\underset{MGL_{(l)}}\wedge ABP/J'\]
We can use the same argument to prove commutativity: if we apply \((-)\underset{R}\wedge A\) to all the relevant diagrams in \ref{CommutativityLemma}, we see that the obstructions to commutativity lie in groups \(\pi_{i,j}(M\underset{R}\wedge A)\) and \(\pi_{i,j}(N\underset{R}\wedge A)\) which are trivial because 4 does not divide \(i\) in the relevant bidegrees. Therefore the product on \(AP(n)\) is in fact homotopy commutative.\\
Now consider the spectra \(A_i\). To define them, we add finitely many elements, namely \(v_{n+1},...,v_{n+i}\), to the sequence \(J\). The proof of \cite[Lemma 9.3.7]{JOA} carries through verbatim  and we can conclude that there is a product map \(A_i\wedge A_i \rightarrow A_i\). Similarly, because we had to add only finitely many elements to \(J\), we can repeat the induction argument above for the spectra \(A_i\). This shows that the multiplication on \(A_i\) is in fact homotopy associative and homotopy commutative.
\end{proof}

By essentially classical arguments, this allows us to conclude that \(Ak(n)\) has the desired ring structure:
\begin{prop}
	Let \(k=\C\) and let \(l\) be an odd prime. Then the connective algebraic Morava K-theory spectrum
	\[Ak(n)=\underset\longrightarrow{\textnormal{hocolim }} A_i=ABP/(v_0,v_1,...v_{n-1},v_{n+1},v_{n+2},...)\]
	admits the structure of a homotopy associative and homotopy commutative motivic ring spectrum.
\end{prop}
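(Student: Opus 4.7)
The strategy is to transport the ring structures on the \(A_i\) from Lemma \ref{CommutativityLemma2} to their sequential homotopy colimit \(Ak(n)\), exploiting the fact that smash products, being left adjoints in each variable, commute with sequential homotopy colimits.

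The first step is to verify that the canonical quotient maps \(\eta_i : A_i \to A_{i+1}\) are multiplicative up to homotopy. This is precisely diagram \ref{ProductLemmaDiagram2} of Lemma \ref{ProductLemma}, applied with \(M = A_i\) and \(N = A_i/v_{n+i+1} = A_{i+1}\); the vanishing hypotheses on homotopy groups hold for the same bidegree--parity reasons used in the proof of Lemma \ref{CommutativityLemma2} (the first coordinate of the relevant bidegrees is not divisible by \(4\)). Consequently the products \(\mu_{A_i}\) assemble into a morphism of sequential diagrams \(\{A_i\wedge A_i\} \to \{A_i\}\).

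The second step is to use the natural equivalence
\[
Ak(n)\wedge Ak(n) \;\simeq\; \mathrm{hocolim}_i\,(A_i\wedge A_i),
\]
which follows from cofinality of the diagonal \(\mathbb{N}\hookrightarrow\mathbb{N}\times\mathbb{N}\) together with commutation of smash products with sequential hocolims in each variable, to define \(\mu_{Ak(n)}\) as the induced map on hocolims. The composition \(S \to AP(n) = A_0 \to Ak(n)\) then supplies a unit. Homotopy associativity and homotopy commutativity reduce, via the same hocolim--smash interchange applied to triple and double smash products, to the hocolim of the corresponding coherence diagrams for \(\mu_{A_i}\), which commute up to homotopy by Lemma \ref{CommutativityLemma2}.

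The main technical subtlety, and the only nontrivial point, is that homotopies chosen at each finite stage \(A_i^{\wedge k}\) do not automatically assemble into a single homotopy on the hocolim: the Milnor short exact sequence
\[
0 \to {\textstyle\mathrm{lim}^1_i}\,[A_i^{\wedge k}, \Sigma^{-1,0}Ak(n)] \to [Ak(n)^{\wedge k}, Ak(n)] \to \mathrm{lim}_i\,[A_i^{\wedge k}, Ak(n)] \to 0
\]
allows an \(\mathrm{lim}^1\)-obstruction. I would dispatch it by verifying the Mittag-Leffler condition for the towers in question: since each transition map \(A_i\to A_{i+1}\) is the cofibre of a single class \(v_{n+i+1}\), the restriction maps eventually stabilise on images, so the \(\mathrm{lim}^1\) vanishes. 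This yields the required homotopy associative and homotopy commutative product on \(Ak(n)\).
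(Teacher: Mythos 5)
Your overall strategy---transport the products on the finite stages \(A_i\) to the sequential hocolim, using the interchange of smash with sequential hocolims and controlling the passage to the limit via the Milnor sequence---is essentially the route the paper takes, except that the paper delegates the colimit step to Strickland's machinery (\cite[Lemma 6.7, Proposition 6.8]{STR}). You have correctly isolated the one genuinely delicate point, namely the \(\mathrm{lim}^1\)-obstruction, but your justification for dispatching it does not work as stated. That ``each transition map \(A_i\to A_{i+1}\) is the cofibre of a single class \(v_{n+i+1}\)'' does not, by itself, imply that the images in the tower \([A_i^{\wedge k},\Sigma^{*,*}Ak(n)]\) stabilise: a tower of cofibres of single classes can perfectly well fail Mittag-Leffler. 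The input you are missing is that the elements \(v_j\), \(j\neq n\), act trivially (nullhomotopically) on \(Ak(n)\) as an \(MGL_{(l)}\)-module (this is \cite[Corollary 9.3.5]{JOA}, and it is the first thing the paper's proof invokes). Granting that, the cofibre sequence \(\Sigma^{|v_{n+i+1}|}A_i\to A_i\to A_{i+1}\) smashed with \(B\) yields an exact sequence
\[
[A_{i+1}\wedge B, Ak(n)]\longrightarrow [A_i\wedge B, Ak(n)]\xrightarrow{\;v_{n+i+1}^{*}\;}[\Sigma^{|v_{n+i+1}|}A_i\wedge B, Ak(n)]
\]
in which the second map is induced by multiplication by \(v_{n+i+1}\) and hence vanishes; so the transition maps are \emph{surjective}, which is what actually gives Mittag-Leffler and \(\mathrm{lim}^1=0\). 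This is precisely the content of Strickland's Lemma 6.7, suitably modified as in the paper.

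Two smaller points. First, your appeal to diagram \ref{ProductLemmaDiagram2} to show \(A_i\to A_{i+1}\) is multiplicative is applied outside the literal hypotheses of Lemma \ref{ProductLemma} (the \(A_i\) are quotients by longer sequences, not of the form \(R/(x,y)\) for the ambient strictly commutative \(R\)); one has to route through the smash decomposition \(MGL_{(l)}/(l,y)\wedge_{MGL_{(l)}}ABP/J'\) as in Lemma \ref{CommutativityLemma2}. Second, for commutativity on the colimit the paper uses Strickland's criterion that the maps \(A_i\to Ak(n)\) commute with themselves, which follows from commutativity of \(\mu_{A_i}\); your reduction of commutativity to the stagewise diagrams is the same idea, but again it only closes once the \(\mathrm{lim}^1\)-term is known to vanish by the argument above.
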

\begin{proof}
By \cite[Corollary 9.3.5]{JOA} the elements \(v_i, i\neq n\) act trivially on \(Ak(n)\). This is in particular the case for \(v_0=l\). Therefore \cite[Lemma 6.7]{STR} holds for \(A=M=Ak(n)\) (although Strickland considers rings in R-modules, the only necessary modification is replacing the map \(\rho^*\) by the map \(\rho^*: [R/(l,x_i)\underset{R}\wedge B, M]\rightarrow[R/x_i\underset{R}\wedge B, M]\rightarrow[B,M]\)), and we can use the arguments of \cite[Proposition 6.8]{STR} to conclude that the constructed products on \(A_i\) induce a unital, homotopy associative product on \(Ak(n)\). As noted in the proof of Stricklands proposition, this product is commutative if and only if the maps \(A_i\rightarrow Ak(n)\) commute with themself (see \cite[Definition 6.1]{STR} for a definition of this notion). Because the product on \(A_i\) is commutative, this is the case for every map out of \(A_i\).
\end{proof}

\begin{corol}
Let \(k=\C\) and let \(l\) be an odd prime. The algebraic Morava K-theory spectrum \(AK(n)=v_n^{-1}Ak(n)\) admits the structure of a commutative and associative motivic homotopy ring spectrum.
\end{corol}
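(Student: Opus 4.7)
The plan is to realize $AK(n) = v_n^{-1} Ak(n)$ as the mapping telescope of multiplication by $v_n$ and to transport the homotopy commutative, associative, unital ring structure on $Ak(n)$ established in the preceding proposition through this colimit. Writing $|v_n|=(s,t)$, we have
$$AK(n) \simeq \operatorname{hocolim}\bigl(Ak(n) \xrightarrow{v_n} \Sigma^{-s,-t} Ak(n) \xrightarrow{v_n} \Sigma^{-2s,-2t} Ak(n) \to \cdots\bigr),$$
and the composite $S \to Ak(n) \to AK(n)$ will serve as the unit. The localization map $\lambda: Ak(n) \to AK(n)$ is a map of (at least pre-) ring spectra candidates, and by construction $v_n$ acts invertibly on $AK(n)$.

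The central step is to build the product. Because smashing with a fixed spectrum preserves sequential homotopy colimits in $\mathcal{SH}_\C$, and because the $v_n$-action on $Ak(n)\wedge Ak(n)$ obtained from the left factor agrees up to homotopy with the action from the right factor (by homotopy commutativity of $\mu_{Ak(n)}$), we obtain
$$AK(n) \wedge AK(n) \simeq v_n^{-1}\bigl(Ak(n) \wedge Ak(n)\bigr).$$
The composite $Ak(n)\wedge Ak(n) \xrightarrow{\mu_{Ak(n)}} Ak(n) \xrightarrow{\lambda} AK(n)$ sends multiplication by $v_n$ on the source to a self-equivalence of $AK(n)$, so it factors through the telescope to yield a map $\mu_{AK(n)}: AK(n)\wedge AK(n)\to AK(n)$ extending $\lambda\circ\mu_{Ak(n)}$.

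With $\mu_{AK(n)}$ in hand, the unit, associativity, and commutativity diagrams are verified by the standard argument that they hold after precomposition with $\lambda\wedge\lambda$ (respectively $\lambda\wedge\lambda\wedge\lambda$), since there they reduce to the corresponding diagrams for $\mu_{Ak(n)}$, which commute up to homotopy by the previous proposition. Since $\lambda \wedge \lambda$ exhibits its target as a telescope of its source and all the relevant morphism sets are compatible with that colimit (again using that the diagonal $v_n$-action on the $n$-fold smash is a $v_n$-action on $AK(n)^{\wedge n}$ by any of the factors), the homotopies propagate to the desired diagrams on $AK(n)$.

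The main potential obstacle is justifying that the smash product in $\mathcal{SH}_\C$ commutes with the particular sequential homotopy colimits defining these $v_n$-telescopes, and relatedly that the two possible $v_n$-actions on $Ak(n)\wedge Ak(n)$ agree up to coherent homotopy in a way that lets us pass the product to the colimit. Both follow from the cellularity of $Ak(n)$ together with the homotopy commutativity of $\mu_{Ak(n)}$, and are essentially the motivic analogue of the classical construction of the ring structure on a telescopic localization as in \cite[Section 6]{STR}; indeed, one could alternatively appeal directly to a motivic version of Strickland's localization criterion, mirroring the use of \cite[Proposition 6.8]{STR} in the preceding proposition.
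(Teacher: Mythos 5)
Your proof is correct in outline, but it takes a different route from the paper, which disposes of the corollary in one line: it writes \(AK(n)\cong v_n^{-1}MGL_{(l)}\underset{MGL_{(l)}}\wedge Ak(n)\), invokes \cite[Proposition 6.6]{STR} for the homotopy commutative and associative product on \(v_n^{-1}MGL_{(l)}\) in the category of \(MGL_{(l)}\)-modules, and then smashes the two ring structures together over \(MGL_{(l)}\) exactly as in the proof of \ref{CommutativityLemma2}. That route outsources all of the telescope bookkeeping to Strickland, whereas you re-derive it by hand in \(\mathcal{SH}_\C\). Your version is more self-contained but is hand-wavy at exactly the point where the classical argument requires care: a map out of a sequential homotopy colimit is determined by its restrictions to the stages only up to a \(\lim^1\)-term, so both the existence of \(\mu_{AK(n)}\) as an extension of \(\lambda\circ\mu_{Ak(n)}\) and the verification of unitality, associativity and commutativity ``after precomposition with \(\lambda\wedge\lambda\)'' need the vanishing of \(\lim^1\) for the relevant towers of mapping groups. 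This does hold here --- the towers are Mittag-Leffler because \(v_n\) acts invertibly on maps into the \(v_n\)-local target \(AK(n)\) --- but your phrase ``all the relevant morphism sets are compatible with that colimit'' should be replaced by this argument. Two smaller points: the agreement of the left and right \(v_n\)-actions on \(Ak(n)\wedge Ak(n)\) is most cleanly deduced from the \(MGL_{(l)}\)-linearity of \(\mu_{Ak(n)}\) built into \ref{ProductLemma} (commutativity of \(\mu_{Ak(n)}\) suffices only once one notes that the module action of \(v_n\) agrees with multiplication by its image in \(\pi_{**}Ak(n)\) under the unit); and cellularity of \(Ak(n)\) is not what makes smashing commute with sequential homotopy colimits --- that is simply the fact that \(-\wedge X\) is exact and preserves coproducts in the compactly generated triangulated category \(\mathcal{SH}_\C\).
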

\begin{proof}
We have an isomorphism \(AK(n)\cong v_n^{-1}MGL_{(l)}\underset{MGL_{(l)}}\wedge Ak(n)\) and both smash factors admit a homotopy commutative and associative product(\cite[Proposition 6.6]{STR}). Therefore we can endow \(AK(n)\) with the desired structure as in the proof of \ref{CommutativityLemma2}.
\end{proof}

It remains to show that this product induces the same product structure on \(AK(n)_{**}\) as one would expect from the computation of these coefficients:
\begin{lemma}
The multiplication map \[\mu_{AK(n)}: AK(n)\wedge AK(n)\rightarrow AK(n)\]
induces the multiplication on \(AK(n)_{**}\) given by the multiplication on \(K(n)_*\) and the isomorphism \(AK(n)_{**}\cong HZ/l_{**}\underset{\Z/l}\otimes K(n)_*\) of \cite[Lemma 6.3.7]{JOA}.
\end{lemma}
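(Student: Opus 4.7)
The plan is to identify the product on $AK(n)_{**}$ by checking it on a set of generators, using two pieces of naturality of the construction of $\mu_{AK(n)}$: that it is performed within the category of $MGL_{(l)}$-modules, and that it is compatible with the canonical quotient $Ak(n) \to H\Z/l$ killing $v_n$.

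First I would observe that the multiplication $\mu_{AK(n)}$ is constructed within $MGL_{(l)}$-modules: Strickland's inductive procedure (\cite[Proposition 6.8]{STR}), the colimit defining $Ak(n)$, and the subsequent localization at $v_n$ are all performed in this category. Consequently the induced multiplication on $AK(n)_{**}$ is $MGL_{(l)*,*}$-bilinear. For the class $v_n \in MGL_{(l)*,*}$ and any $x \in AK(n)_{**}$, the ring product $v_n \cdot x$ therefore agrees with the $MGL_{(l)}$-module action of $v_n$ on $x$, which pins down all products $v_n^i \cdot v_n^j = v_n^{i+j}$ and matches the multiplication on $K(n)_*$ under Joachimi's isomorphism.

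Next I would use the canonical quotient $q\colon Ak(n) \to Ak(n)/(v_n) \simeq H\Z/l$, regarded as a map of $MGL_{(l)}$-module spectra. Running the inductive construction in \ref{CommutativityLemma2} one step at a time, the multiplications on the intermediate spectra $A_i$ should descend compatibly to the terminal quotient $H\Z/l$, where the standard commutative product is already present, making $q$ a map of homotopy ring spectra. Combined with the identification $Ak(n)_{**} \cong H\Z/l_{**}[v_n]$, this forces the subring generated by lifts of $\tau^i \in H\Z/l_{**}$ to multiply exactly as in $H\Z/l_{**}$. For mixed products $\tau^i \cdot v_n^j$, homotopy commutativity from Proposition \ref{CommutativityLemma} together with the $MGL_{(l)}$-bilinearity above reduces every expression to a composite of the cases already handled, and inverting $v_n$ to pass from $Ak(n)$ to $AK(n)$ is harmless since the localization takes place in the category of ring spectra.

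The main obstacle will be the naturality argument in the preceding paragraph: Strickland's procedure involves obstruction-theoretic lifts against cofiber sequences, and one must ensure that these lifts can be chosen compatibly with $q$. If that compatibility is not automatic, I would fall back on a uniqueness statement, arguing that any $MGL_{(l)*,*}$-bilinear, homotopy commutative product on $Ak(n)_{**}$ that reduces to the standard product on $H\Z/l_{**}$ under $q$ is already determined. As a cross-check, Betti realization sends $\mu_{AK(n)}$ to a homotopy ring structure on $K(n) = R(AK(n))$, which by the topological analog of this naturality argument must agree with the standard one, independently confirming the $K(n)_*$-direction of the computation.
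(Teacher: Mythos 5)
Your strategy --- pinning down the product on the generators \(v_n\) and \(\tau\) of \(AK(n)_{**}\cong\Z/l[\tau,v_n^{\pm1}]\) via, respectively, the \(MGL_{(l)}\)-module structure and the quotient map to \(H\Z/l\) --- is the natural one, and it is essentially the argument the paper delegates wholesale to \cite[Lemma 9.3.10]{JOA}, so there is no real divergence of method. Two points deserve comment. First, the ``main obstacle'' you flag (arranging Strickland's obstruction-theoretic lifts compatibly with \(q\colon Ak(n)\to Ak(n)/(v_n)\simeq H\Z/l\)) is not actually an obstacle: diagram \ref{ProductLemmaDiagram2} in Lemma \ref{ProductLemma} says precisely that the canonical map \(\eta\colon M\to N=M/x\) intertwines \(\mu_M\) and \(\mu_N\), so compatibility with each successive quotient is built into the construction rather than something to be checked afterwards; what remains is only to note that this persists through the induction, the colimit defining \(Ak(n)\), and the localization, and that the product the machinery induces on the terminal quotient \(H\Z/l\) is the standard one (for odd \(l\) this follows from uniqueness of the homotopy ring structure on \(H\Z/l\), since the strict product already satisfies the defining diagrams). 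Second, be careful about the scope of ``\(MGL_{(l)*,*}\)-bilinearity'': it determines products of elements lying in the image of \(MGL_{(l)**}\to AK(n)_{**}\), which covers the powers of \(v_n\) but not \(\tau\) (there is no evident lift of \(\tau\) to \(MGL_{(l),0,-1}\)), so your quotient-map argument for the \(\tau\)-direction is genuinely needed and not merely a convenience. Relatedly, the Betti-realization cross-check at the end only controls products up to a unit of \(\Z/l\) in each bidegree, since \(\tau\) realizes to \(1\); it can rule out \(\tau\cdot\tau=0\) but cannot distinguish \(\tau\cdot\tau=\tau^2\) from \(c\,\tau^2\), so it cannot substitute for the argument via \(q\).
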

\begin{proof}
The proof is similar to the proof of \cite[Lemma 9.3.10]{JOA}
\end{proof}

\section{Thick subcategories characterized by motivic \(v_n\)-self maps}
Let \(l\) be an odd prime and let \(k=\C\). The aim of this section is to show that the existence of \(v_n\)-self maps characterizes thick subcategories in \(\mathcal{SH}_{\C}^{qfin}\) and hence also in the motivic homotopy category. We consider only the case \(n>0\).\\

\begin{defin}
Let \(X\) be a motivic spectrum in \(\mathcal{SH}_{\C}^{qfin}\) or \(\mathcal{SH}_{\C,l}^{\wedge,qfin}\). A map \(f: \Sigma^{t,u} X \rightarrow X\) is a motivic \(v_n\) self-map if it satisfies the following conditions:
\begin{enumerate}
\item \(AK(m)_{**} f\) is nilpotent if \(m \neq n\)
\item \(AK(m)_{**} f\) is given by multiplication with an invertible element of \(H\Q_{**}\) if \(m=n=0\).
\item \(AK(m)_{**} f\) is an isomorphism if \(m=n\neq 0\).
\end{enumerate}
\end{defin}

\ \\
As mentioned before, the topological nilpotence theorem is a key ingredient in the proof that topological finite cell spectra spectra admitting a \(v_n\)-self map form a thick subcategory: A map of finite spectra is nilpotent if and only if it induces zero in all Morava K-theories. The motivic equivalent of this theorem does not hold: For example, the motivic Hopf map \(\eta\) is a non-nilpotent map in \(\mathcal{SH}_{\C}\), but induces the zero map in motivic Morava K-theory for degree reasons. It seems likely however that a weaker version of the theorem applies, where we only consider maps of a certain bidegree. For the remainder of this subsection we assume that the following motivic nilpotence conjecture holds:
\begin{conj}
	\label{NilpotenceConjecture}
Let \(k=\C\), let \(l\) be an odd prime and \(n>0\) be an integer. If \(X\) is a motivic spectrum in \(\mathcal{SH}_{\C}^{qfin}\) or \(\mathcal{SH}_{\C,l}^{\wedge,qfin}\) and \(f:\Sigma^{p,q}X\rightarrow X\) is a motivic map such that \((p,q)\) is a multiple of \((2l^n-2,l^n-1)\), then: \[\forall m\in \N: AK(m)_{**}(f)=0 \implies \exists k\in \N: f^k\simeq 0\]
\end{conj}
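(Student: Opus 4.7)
The plan is to reduce the motivic nilpotence statement to the classical nilpotence theorem via Betti realization, and then to control the kernel of Betti realization on the relevant endomorphism group using the specific bidegree restriction. Since $R : \mathcal{SH}_\C \to \mathcal{SH}_{top}$ is strict symmetric monoidal, preserves cofibers and retracts, and satisfies $R(AK(m)) = K(m)$, it sends $\mathcal{SH}_\C^{qfin}$ into the thick subcategory of finite topological spectra and sends $f$ to a map $R(f) : \Sigma^p R(X) \to R(X)$ with $K(m)_\ast(R(f)) = 0$ for all $m$. The classical Devinatz--Hopkins--Smith nilpotence theorem then yields $N$ with $R(f)^N \simeq 0$. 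Replacing $f$ by $f^N$ (whose bidegree is still a multiple of $(2l^n-2,l^n-1)$ and which still kills every $AK(m)$), we may assume $R(f)$ is null.

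The remaining step is to show that a map $f : \Sigma^{p,q}X \to X$ lying in the kernel of Betti realization and killing all $AK(m)_{**}$, with $(p,q)$ a positive multiple of $(2l^n-2,l^n-1)$, is itself nilpotent. I would work $l$-completely and set up the $ABP^\wedge_l$-based motivic Adams--Novikov spectral sequence converging to $\pi_{**}F(X,X)^\wedge_l$. The kernel of Betti realization at odd primes in these bidegrees is controlled by $\tau$-torsion: the first motivic degree $p=k(2l^n-2)$ is even and the weight $q = k(l^n-1)$ satisfies $p - 2q = 0$, so $f$ lives on the ``topological line''; classes pulled back from topology cannot contribute (they vanish by Step~1), leaving only classes detected by powers of $\tau$ in the ANSS filtration. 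One then has to argue that vanishing in every $AK(m)$-homology forces such a $\tau$-torsion class to sit in arbitrarily high Adams--Novikov filtration, hence to be nilpotent by a vanishing line argument of the same flavor as Hopkins--Smith's reduction of the nilpotence theorem to $BP$.

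The hard part, and the reason this is only a conjecture in the paper, is exactly the last assertion: producing the vanishing line on the $\tau$-torsion part of the $ABP^\wedge_l$-ANSS for an arbitrary $X \in \mathcal{SH}_{\C,l}^{\wedge,qfin}$. In the classical case this is where the deep input of \cite{DHS}---the nilpotence of smash-product-trivial self maps, proved via Thom spectrum and $X(n)$ techniques---enters, and there is no known motivic replacement that handles the $\tau$-divisible classes uniformly. The bidegree restriction $(p,q)\in \mathbb{Z}\cdot(2l^n-2,l^n-1)$ eliminates the obvious obstruction (the non-nilpotence of $\eta$, which lives in bidegree $(1,1)$ and thus never appears as a factor of a power of $f$), so the conjecture is reduced to the $\tau$-local analysis; but proving the corresponding motivic ``smash nilpotence implies nilpotence'' statement in these bidegrees is, as far as I can see, where a genuinely new idea would be required.
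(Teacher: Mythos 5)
This statement is labelled as a \emph{conjecture} in the paper (Conjecture \ref{NilpotenceConjecture}); the author offers no proof of it and explicitly assumes it as a hypothesis in Theorem \ref{ThickSubcat} and the surrounding lemmas. So there is no ``paper's own proof'' to compare against, and your proposal does not close the gap either: by your own admission, the essential step is left unestablished. What you do prove is only the easy half of a reduction. The first paragraph is fine: Betti realization is monoidal, sends $\mathcal{SH}_\C^{qfin}$ to finite spectra, and (using that $AK(m)_{**}(X)[\tau^{-1}]\to K(m)_*(R(X))[\tau,\tau^{-1}]$ is an isomorphism for quasifinite $X$, as in Lemma \ref{taurealization}) the hypothesis $AK(m)_{**}(f)=0$ for all $m$ does imply $K(m)_*(R(f))=0$ for all $m$, so the classical Devinatz--Hopkins--Smith theorem lets you assume $R(f)$ is null after replacing $f$ by a power.

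The second paragraph, however, is a heuristic outline rather than an argument, and it is exactly where the conjecture lives. The claim that a self map in the kernel of Betti realization which kills all $AK(m)$ must sit in arbitrarily high filtration in some $ABP^\wedge_l$-based Adams--Novikov spectral sequence, and that a vanishing line then forces nilpotence, is a motivic analogue of the full strength of \cite{DHS}; no such vanishing line on the $\tau$-torsion part is known for an arbitrary $X\in\mathcal{SH}_{\C,l}^{\wedge,qfin}$, and the paper itself points to known families of non-nilpotent self maps killed by all motivic Morava K-theories (which evade the conjecture only because of the bidegree restriction). Your remark that classes ``pulled back from topology cannot contribute'' also conflates detection in the spectral sequence with the homotopy class itself: a class can realize to zero without being $\tau$-torsion in any page one controls. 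In short, your reduction is correct but routine, the statement remains open after it, and the proposal should not be presented as a proof.
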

\ \\
The known examples of non-nilpotent motivic self maps that induce the zero map in motivic Morava-K-theory (A variety of examples can be found in \cite{HOR}, and Boghdan George has constructed a whole family of such maps detected by exotic motivic Morava K-theories) do not contradict this conjecture.\\
\ \\
To prove the motivic equivalent of asymptotic uniqueness, we want to use Betti realization to compare the motivic situation to the classical one. To do this, we need to study the effect of Betti Realization on homology groups of \(AK(n)\). We will show that the kernel of the map induced by Betti realization is precisely the \(\tau\)-primary torsion elements. To do this, we need to compare \(K(n)_*\) and \(AK(n)_{**}\)-modules, which is only possible after inverting \(\tau\). We also need the fact that the \(AK(n)\)-homology of a (quasi)-finite motivic cell spectrum is finitely generated over the coefficients:

\begin{lemma}
\label{FinGen}
Let \(X\) be a motivic spectrum in \(\mathcal{SH}_{\C}^{qfin}\) or \(\mathcal{SH}_{\C,l}^{\wedge,qfin}\). Then 
\begin{enumerate}
\item \(AK(n)_{**}(X)\) is finitely presented as an \(AK(n)_{**}\)-module.
\item \(\text{Hom}_{AK(n)_{**}}(AK(n)_{**}(X),M)\) is finitely presented as an \(AK(n)_{**}\)-module for every finitely presented \(AK(n)_{**}\)-module \(M\). In particular, \[\text{End}_{AK(n)_{**}}(AK(n)_{**}(X))\]
is finitely presented.
\end{enumerate}
\end{lemma}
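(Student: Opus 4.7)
The plan is to prove both parts via a thick-subcategory argument, reducing everything to the case $X = S^{p,q}$ and exploiting that the coefficient ring $AK(n)_{**}$ is noetherian.

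First I would record that the earlier identification $AK(n)_{**} \cong H\mathbb{Z}/l_{**} \otimes_{\mathbb{Z}/l} K(n)_*$ shows that the coefficient ring is $\mathbb{F}_l[\tau, v_n, v_n^{-1}]$, a localisation of a polynomial ring over a field, and hence noetherian. Over a noetherian ring finitely generated and finitely presented modules coincide, and the full subcategory of finitely generated modules is abelian and closed under extensions, subquotients, and direct summands.

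For part (1), I would let $\mathcal{T} \subset \mathcal{SH}_\C$ be the full subcategory of spectra $X$ such that $AK(n)_{**}(X)$ is finitely generated, and check that $\mathcal{T}$ is thick. Given a cofiber sequence $X \to Y \to Z$, the long exact sequence of $AK(n)_{**}$-modules exhibits each of the three homology modules as an extension built from the other two, so if two are finitely generated the third is as well. A retract at the spectrum level induces a retract in $AK(n)_{**}$-homology, whose image is a direct summand of a finitely generated module and hence finitely generated. Finally $S^{p,q} \in \mathcal{T}$ since $AK(n)_{**}(S^{p,q}) \cong \Sigma^{p,q} AK(n)_{**}$ is free of rank one. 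By the minimality clause in Definition \ref{finiteType} this forces $\mathcal{SH}_\C^{fin} \subset \mathcal{T}$, and closure under retracts gives $\mathcal{SH}_\C^{qfin} \subset \mathcal{T}$. For the $l$-complete version, where $X = Y^\wedge_l$ with $Y \in \mathcal{SH}_\C^{qfin}$, I would use that $l = v_0$ acts as zero on $AK(n)$ to conclude that $AK(n) \wedge Y$ is already $l$-complete, so $AK(n)_{**}(Y^\wedge_l) \cong AK(n)_{**}(Y)$, reducing to the previous case.

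For part (2), given $X$ in either quasifinite category and $M$ finitely presented, part (1) produces a finite presentation $R^a \to R^b \to AK(n)_{**}(X) \to 0$ over $R \defeq AK(n)_{**}$. Applying $\text{Hom}_R(-, M)$ yields an exact sequence $0 \to \text{Hom}_R(AK(n)_{**}(X), M) \to M^b \to M^a$, exhibiting $\text{Hom}_R(AK(n)_{**}(X), M)$ as a submodule of the finitely generated $R$-module $M^b$; since $R$ is noetherian this submodule is finitely generated, hence finitely presented. Setting $M = AK(n)_{**}(X)$, which is finitely presented by part (1), recovers the endomorphism ring statement.

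The main technical obstacle I anticipate is the identification $AK(n)_{**}(Y^\wedge_l) \cong AK(n)_{**}(Y)$ needed for the $l$-complete case: one must justify that smashing with $AK(n)$ is insensitive to $l$-completion. The cleanest route is to combine the splitting $AK(n) \wedge S/l \simeq AK(n) \vee \Sigma^{1,0} AK(n)$ coming from the vanishing of $l$ on $AK(n)$ with a Milnor-sequence argument for the tower $\{Y/l^n\}$ defining $Y^\wedge_l$, which shows that $AK(n) \wedge Y$ is already $S/l$-local and hence equivalent to its $l$-completion.
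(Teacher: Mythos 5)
Your proposal is correct and follows essentially the same route as the paper: both reduce to the noetherianity of $AK(n)_{**}\cong \mathbb{F}_l[\tau,v_n^{\pm 1}]$, prove part (1) by cellular induction plus closure under retracts via the long exact sequence and the submodule/quotient argument, and prove part (2) by exhibiting $\text{Hom}_{AK(n)_{**}}(AK(n)_{**}(X),M)$ as a submodule of $M^k$ via a finite presentation. Your explicit treatment of the $l$-complete case (via $l$ acting as zero on $AK(n)$, so that $AK(n)\wedge Y$ is already $S/l$-local) is a point the paper's proof passes over silently, and is a worthwhile addition.
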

\begin{proof}
Note that \(AK(n)_{**}\) is a quotient of a polynomial ring in the three variables \(v_n, v_n^{-1}, \tau\) over the field \(\mathbb{F}_l\) and hence Noetherian. Therefore a \(AK_{**}\)-module is finitely presented if and only if it is finitely generated. 
\begin{enumerate}
\item We will show the statement for finite cell spectra by cellular induction and then show that it also holds for retracts of finite cell spectra.
The claim is trivially true for the sphere spectrum. If the statement holds for a spectrum, it also holds for retracts of this spectrum because \(AK(n)_{**}\) is Noetherian and submodules of finitely generated modules are again finitely generated.
It remains to show that if the spectra \(X\) and \(Y\) in a cofiber sequence \(X\overset{f}\longrightarrow Y \overset{g}\longrightarrow Z \) satisfy the statement, then so does \(Z\). Consider the long exact sequence in \(AK(n)\)-homology
\[...\rightarrow  AK(n)_{**}(Y) \overset{g}\longrightarrow AK(n)_{**}(Z)\overset{\delta}\longrightarrow AK(n)_{*-1,*}(X)\overset{f}\longrightarrow AK(n)_{*-1,*}(Y)\rightarrow ...\]
associated to this cofiber sequence. We can break it up into short exact sequences in the canonical way:
\[0\rightarrow \textnormal{coker}(f)\overset{\bar{g}}\longrightarrow AK(n)_{**}(Z)\overset{\bar{\delta}_p}\longrightarrow \textnormal{ker}(f)[-1]\rightarrow 0\]
The two outer terms in the short exact sequence are finitely generated: \(\textnormal{ker}(f)[-1]\) as a submodule of a finitely generated module over a Noetherian ring, and \(\textnormal{coker}(f)\) as a quotient of a finitely generated module. Therefore the middle term is also finitely generated.
\item By the first part of this lemma, \(AK(n)_{**}(X)\) is finitely generated as an \(AK(n)_{**}\)-module. Therefore there is a surjection \(R^k\rightarrow AK(n)_{**}\) from a free and finitely generated \(AK(n)_{**}\)-module \(R^k\) onto \(AK(n)_{**}(X)\). Then
\[\text{Hom}_{AK(n)_{**}}(R^k,M)\cong M^k\]
is a free and finitely generated \(AK(n)_{**}\)-module. Because \(AK(n)_{**}\) is a Noetherian ring,
\[\text{Hom}_{AK(n)_{**}}(AK(n)_{**}(X),M)\]
is finitely generated as a submodule of this finitely generated module.
\end{enumerate}
\end{proof}

\begin{rem}
\label{ExtScalarFlat}
One can regard \(K(n)_{*}\) and its modules as a bigraded ring and bigraded modules concentrated in degree 0 with respect to the second bidegree. Then every \(AK(n)_{**}[\tau^{-1}]\)-module has the structure of a bigraded \(K(n)_{*}\)-module where \(v_n^{top}\) acts via \(\tau^{l^n-1}v_n\). (This of course implies that \((v_n^{top})^{-1}\) acts via \(\tau^{-l^n+1}v^{-1}_n\), so it only makes sense after inverting \(\tau\).) With this module structure, \(AK(n)_{**}[\tau^{-1}]\) is free (with basis \(\tau^{k}, k\in \mathbb{Z},- l^n+1< k < l^n-1\)) and in particular flat as a \(K(n)_{*}\)-module. Likewise it is flat as an \(AK(n)_{**}\)-module, because it is a localization. We will implicitly use this in the following statements and sometimes write \(-[\tau^{-1}]\) for \(-\underset{AK(n)_{**}}{\otimes}AK(n)_{**}[\tau^{-1}]\), and \(-[\tau,\tau^{-1}]\) for \(-\underset{K(n)_*}{\otimes}AK(n)_{**}[\tau^{-1}]\) as an abbreviation. 
\end{rem}

\begin{lemma}
(Compare \cite[2.7 + 2.8]{DI})\\
\label{taurealization}
\begin{enumerate}
\item Let \(X\) be a motivic spectrum in \(\mathcal{SH}_{\C}^{qfin}\) or \(\mathcal{SH}_{\C,l}^{\wedge,qfin}\).\\
We can define a map of bigraded \(AK(n)_{**}[\tau^{-1}]\)-modules (even a map of bigraded algebras if \(X\) is a ring spectrum) natural in \(X\)
\[R: AK(n)_{**}(X)\underset{AK(n)_{**}}{\otimes}AK(n)_{**}[\tau^{-1}]\rightarrow K(n)_{*}(R_\C(X))\underset{K(n)_*}{\otimes}AK(n)_{**}[\tau^{-1}]\]
via the assignment
\[x\otimes \tau^k\mapsto R_\C(x)\otimes \tau^{-q+k} \]
where \(q\) is the motivic weight of \(x\in AK(n)_{p,q}(X)\).\\
This map is an isomorphism.
\item The induced map
\[\bar{R}_{End,X}: \text{End}_{AK(n)_{**}}(AK(n)_{**}(X))[\tau^{-1}]\rightarrow \text{End}_{K(n)_{*}}(K(n)_{*}(R_\C(X)))[\tau,\tau^{-1}]\]
is an isomorphism of bigraded \(AK(n)_{**}[\tau^{-1}]\)-algebras.
\item  A homogeneous element \(f\in\text{End}_{AK(n)_{**}}(AK(n)_{**}(X))\) maps to zero under the map
\[
R_{End,X}: \text{End}_{AK(n)_{**}}(AK(n)_{**}(X))\rightarrow \text{End}_{K(n)_{*}}(K(n)_{*}(R_\C(X)))
\]
induced by motivic realization if and only if it is \(\tau\)-primary torsion.
\end{enumerate}
\end{lemma}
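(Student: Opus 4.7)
The three parts are proved in order, with Part 1 by cellular induction and Parts 2 and 3 as formal consequences.

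For Part 1, I first verify that the displayed formula is bigrading-preserving: with $\tau$ of bidegree $(0,-1)$ in homology, the element $x \otimes \tau^k$ sits in bidegree $(p, q-k)$ on the source (writing $x \in AK(n)_{p,q}(X)$), while $R_\C(x) \otimes \tau^{-q+k}$ sits in bidegree $(p, q-k)$ on the target, since $R_\C(x)$ has topological degree $p$ and the $\tau$-factor contributes bidegree $(0, q-k)$. Naturality in $X$ is immediate from the factorisation as the Hurewicz-type natural transformation $R_\C \colon AK(n)_{**}(-) \to K(n)_*(R_\C(-))$ followed by extension of scalars along $K(n)_* \hookrightarrow AK(n)_{**}[\tau^{-1}]$. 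Remark \ref{ExtScalarFlat} ensures that $AK(n)_{**}[\tau^{-1}]$ is flat over both $AK(n)_{**}$ and $K(n)_*$, so both sides take cofiber sequences in $X$ to long exact sequences. I then argue by cellular induction: for $X = S^{p,q}$ both sides are free of rank one and $R$ is patently an isomorphism; for a cofiber sequence $X \to Y \to Z$ with the result known for $X$ and $Y$, the five-lemma applied to the two long exact sequences gives it for $Z$; for a retract of a spectrum where it holds, the naturality of $R$ identifies the retract as a retract of an isomorphism. The argument applies verbatim in $\mathcal{SH}_{\C,l}^{\wedge,qfin}$.

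For Part 2, the key input is the standard fact that for a finitely presented module $M$ over a commutative ring $R$ and a flat $R$-algebra $S$, the canonical map $\text{Hom}_R(M,M) \otimes_R S \to \text{Hom}_S(M \otimes_R S, M \otimes_R S)$ is an isomorphism. Applied with $R = AK(n)_{**}$, $M = AK(n)_{**}(X)$ (finitely presented by Lemma \ref{FinGen}), and $S = AK(n)_{**}[\tau^{-1}]$, this yields
\[
\text{End}_{AK(n)_{**}}(AK(n)_{**}(X))[\tau^{-1}] \cong \text{End}_{AK(n)_{**}[\tau^{-1}]}\bigl(AK(n)_{**}(X)[\tau^{-1}]\bigr).
\]
Applied with $R = K(n)_*$, $M = K(n)_*(R_\C X)$ (finitely presented because $R_\C X$ is a finite topological spectrum and $K(n)_*$ is a graded field), and $S = AK(n)_{**}[\tau^{-1}]$, it yields
\[
\text{End}_{K(n)_*}(K(n)_*(R_\C X))[\tau,\tau^{-1}] \cong \text{End}_{AK(n)_{**}[\tau^{-1}]}\bigl(K(n)_*(R_\C X)[\tau,\tau^{-1}]\bigr).
\]
The right-hand sides of these two displays are identified by applying $\text{End}_{AK(n)_{**}[\tau^{-1}]}$ to the Part 1 isomorphism, and by construction $\bar{R}_{End,X}$ is the resulting composite of the three isomorphisms.

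For Part 3, let $\iota$ denote the localisation $\text{End}_{AK(n)_{**}}(AK(n)_{**}(X)) \to \text{End}_{AK(n)_{**}}(AK(n)_{**}(X))[\tau^{-1}]$ and let $j$ denote the extension of scalars $\text{End}_{K(n)_*}(K(n)_*(R_\C X)) \to \text{End}_{K(n)_*}(K(n)_*(R_\C X))[\tau,\tau^{-1}]$. By construction of $\bar{R}_{End,X}$, one has $j \circ R_{End,X} = \bar{R}_{End,X} \circ \iota$. The map $j$ is injective, because $K(n)_*$ is a graded field and $AK(n)_{**}[\tau^{-1}]$ is a free, nonzero $K(n)_*$-module, hence faithfully flat (Remark \ref{ExtScalarFlat}). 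Since $\bar{R}_{End,X}$ is an isomorphism by Part 2, $R_{End,X}(f) = 0$ if and only if $\iota(f) = 0$, which by the definition of localisation means precisely that $f$ is $\tau$-primary torsion. The main obstacle I anticipate is not any single step but careful bookkeeping between the two flat base-changes and the bigradings; in particular one must check that the composite of the three isomorphisms used in Part 2 really coincides with the map $\bar{R}_{End,X}$ induced by the Part 1 identification.
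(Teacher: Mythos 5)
Your proposal is correct and follows essentially the same route as the paper: cellular induction (spheres, cofiber sequences via the five lemma, retracts) for Part 1, flat base change of $\text{Hom}$ out of a finitely presented module for Part 2, and a comparison of the localized and unlocalized realization maps for Part 3. The only cosmetic difference is in Part 3, where the paper closes the square with a map $P$ sending $\tau\mapsto 1$ (which has no homogeneous elements in its kernel) instead of your injective scalar-extension map $j$; note that with the standard $j\colon g\mapsto g\otimes 1$ your square commutes only up to the unit $\tau^{-q}$, which is harmless for the vanishing statement but worth stating explicitly.
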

\begin{proof}
\begin{enumerate}
\item The statement about naturality and the module/algebra structure follow from the properties of motivic realization. It remains to show that the map is an isomorphism for spectra \(X\) in \(\mathcal{SH}_{\C}^{qfin}\) or \(\mathcal{SH}_{\C,l}^{\wedge,qfin}\). We will prove this using cellular induction, and then show that it remains an isomorphism under taking retracts.\\
Consider the case of the sphere spectrum \(X=S\): The map \[R: AK(n)_{**}[\tau^{-1}]\rightarrow K(n)_{*}\underset{K(n)_*}{\otimes}AK(n)_{**}[\tau^{-1}]\] sends \(\tau\) to \(\tau\) and \(v_n\in AK(n)_{2(l^n-1),l(n-1)}\) to \(v_n^{top}\otimes \tau^{-l^n+1}=1\otimes \tau^{-l^n+1}\tau^{l^n-1}v_n=v_n\), so it is  an isomorphism.\\
If \(X\) is a retract of a spectrum \(F\) for which the statement holds, then \(AK(n)_{**}(X)\) is a direct summand of \(AK(n)_{**}(F)\) and all squares in the following diagram commute:
\[
\xymatrix{
AK(n)_{p,q}(X)[\tau^{-1}]\ar[r]^{AK(n)_{**}(s)}\ar[d]^{R_X} \ar@/^2.0pc/[rr]_{id}& 
AK(n)_{p,q}(F)[\tau^{-1}]\ar[d]_{R_F}^\cong \ar[r]^{AK(n)_{**}(r)} &
AK(n)_{p,q}(X)[\tau^{-1}]\ar[d]^{R_X}
\\
K(n)_{p}(R_\C(X))[\tau,\tau^{-1}]\ar[r]_{K(n)_{*}(R_\C(s))}\ar@/_2.0pc/[rr]_{id} &
K(n)_{p}(R_\C(F))[\tau,\tau^{-1}]\ar[r]_{K(n)_{*}(R_\C(r))}&
K(n)_{p}(R_\C(X))[\tau,\tau^{-1}]
\\
}
\]

Therefore \(R_X\) is surjective and injective via a simple diagram chase.\\
Finally, suppose \(X\rightarrow Y \rightarrow Z\) is a cofiber sequence and the statement holds for \(X\) and \(Y\). Then the long exact sequence for \(AK(n)\)-homology maps to the long exact sequence for \(K(n)\)-homology associated to the cofiber sequence \(R_\C(X)\rightarrow R_\C(Y) \rightarrow R_\C(Z)\), and the five lemma tells us that the statement also holds for \(Z\):
\[
\xymatrix@C=7pt{
... \ar[r]\ar[d]_{\cong}^{R_X}&
AK(n)_{pq}(Y)[\tau^{-1}] \ar[r]\ar[d]_{\cong}^{R_Y}&
AK(n)_{pq}(Z)[\tau^{-1}] \ar[r]\ar[d]^{R_Z}&
AK(n)_{p-1,q}(X)[\tau^{-1}]\ar[r] \ar[d]_{\cong}^{R_X}&
... \ar[d]_{\cong}^{R_Y}\\
... \ar[r]&
K(n)_{p}(R_\C(Y))[\tau][\tau^{-1}] \ar[r]&
K(n)_{p}(R_\C(Z))[\tau][\tau^{-1}] \ar[r]&
K(n)_{p-1}(R_\C(X))[\tau][\tau^{-1}]\ar[r]&
...\\
}
\]

\item Let \(M\) be a finitely presented \(K(n)_{*}\)-module and \(N\) be an arbitrary \(K(n)_{*}\)-module. As noted in \ref{ExtScalarFlat}, \(AK(n)_{**}[\tau^ -1]\) is a flat \(K(n)_{*}\)-module. By \cite[§2.10, Proposition 11]{BOUR} there is a canonical isomorphism:
\[\text{Hom}_{K(n)_{*}}(M,N)[\tau,\tau^{-1}]\overset\cong\longrightarrow \text{Hom}_{K(n)_{*}[\tau,\tau^{-1}]}(M[\tau,\tau^{-1}],N[\tau,\tau^{-1}])\]
Likewise, let \(M\) be a finitely presented \(AK(n)_{**}\)-module and \(N\) be an arbitrary \(AK(n)_{**}\)-module. Because \(AK(n)_{**}[\tau^{-1}]\) is a flat \(AK(n)_{**}\)-module, there is also a canonical isomorphism:
\[\text{Hom}_{AK(n)_{**}}(M,N)[\tau^{-1}]\overset\cong\longrightarrow \text{Hom}_{AK(n)_{**}[\tau^{-1}]}(M[\tau^{-1}],N[\tau^{-1}])\] 

The module \(AK(n)_{**}(X)\) is finitely presented by \ref{FinGen}. Specializing to the case \(M=N=AK(n)_{**}(X)\), these two isomorphisms fit in the following commutative diagram:
\[
\xymatrix{
\text{End}_{AK(n)_{**}}(AK(n)_{**}(X))[\tau^{-1}] \ar[r]^\cong\ar[d]&
\text{End}_{AK(n)_{**}[\tau^{-1}]}(AK(n)_{**}(X)[\tau^{-1}]) \ar[d] \\
\text{End}_{K(n)_{*}}(K(n)_{*}(R_\C(X)))[\tau,\tau^{-1}] \ar[r]^\cong &
\text{End}_{K(n)_{*}[\tau,\tau^{-1}]}(K(n)_{*}(R_\C(X)))[\tau,\tau^{-1}])\\
}
\]
The first statement of the lemma tells us that \(K(n)_{*}[\tau,\tau^{-1}]\cong AK(n)_{**}[\tau^{-1}]\) and \(K(n)_{*}(R_\C(X)))[\tau,\tau^{-1}]\cong AK(n)_{**}(X)[\tau^{-1}]\), so the right vertical map is an isomorphism. It follows that the left vertical map is also an isomorphism.

\item Let \[P: \text{End}_{K(n)_{*}}(K(n)_{*}(R_\C(X)))[\tau,\tau^{-1}]\rightarrow \text{End}_{K(n)_{*}}(K(n)_{*}(R_\C(X)))\] be the map of \(K(n)_{**}\)-algebras defined by sending \(\tau\) to 1 and elements of \(\text{End}_{K(n)_{*}}(K(n)_{*}(R_\C(X)))\) to themselves. Then we have a commutative diagram of \(K(n)_{*}\)-algebras:
\[
\xymatrix{
\text{End}_{AK(n)_{**}}(AK(n)_{**}(X))[\tau^{-1}]\ar[d]^{\bar{R}_{End,X}}& 
\text{End}_{AK(n)_{**}}(AK(n)_{**}(X))\ar[l]\ar[d]^{R_{End,X}}\\
\text{End}_{K(n)_{*}}(K(n)_{*}(R_\C(X)))[\tau,\tau^{-1}]\ar[r]^{\quad P}& \text{End}_{K(n)_{*}}(K(n)_{*}(R_\C(X)))\\
}
\]

A homogeneous element maps to zero under the top horizontal map if and only if it is \(\tau\)-primary torsion. By the second statement of this lemma, the left vertical map is an isomorphism, and there are no homogeneous elements in the kernel of \(P\). All this together implies the desired result.
\end{enumerate}
\end{proof}

\begin{rem}
If \(X\) is strongly dualizable, the map \(DX\wedge X= F(X,S)\wedge X \rightarrow F(X,X)\) is a weak equivalence, and we have a corresponding isomorphism on homotopy groups \(\pi_{pq}(X\wedge DX)\cong \textnormal{End}(X)_{pq}\). With regard to motivic Morava K-theory the situation is more complicated. Using Spanier-Whitehead duality we have:
\begin{align*}AK(n)_{pq}(X\wedge DX)&=[S, AK(n) \wedge X \wedge DX]_{pq}\\
&=[X, AK\wedge X]_{pq}\\
&=[AK \wedge X, AK\wedge X]_{AK,pq}
\end{align*}
The last term is related to \(\textnormal{End}_{AK(n)_{**}}(AK(n)_{**}(X))_{pq}\) via the Universal coefficient spectral sequence(c.f \cite[Prop. 7.7]{DI2}]. The \(E_2\)-term of this spectral sequence is given by \[\textnormal{Ext}_{AK(n)_{**}}(AK(n)_{**}(X),AK(n)_{**}(X))\]
and it converges conditionally to \([AK \wedge X, AK\wedge X]_{AK,pq}\). In particular, if \(AK(n)_{**}(X)\) is free or just projective as an \(AK(n)_{**}\)-module, this spectral sequence collapses at the \(E_2\)-page because it is concentrated in the 0-line, and we get an isomorphism:
\[AK(n)_{pq}(X\wedge DX)\cong \textnormal{End}_{AK(n)_{**}}(AK(n)_{**}(X))_{pq}\]
However, there is no general reason why \(AK(n)_{**}(X)\) should be free or projective. In contrast to this, all graded modules over the graded field \(K(n)_{*}\) are free, and therefore we always have an isomorphism \[K(n)_{**}(X\wedge DX)\cong \textnormal{End}_{K(n)_{*}}(K(n)_{*}(X))\] for all finite topological cell spectra \(X\). As a consequence, instead of working with \(AK(n)_{**}(X\wedge DX)\), we will work directly with \(\textnormal{End}_{AK(n)_{**}}(AK(n)_{**}(X))\) motivically.\\
\end{rem}
\ \\
Every element in \(AK(n)_{**}\) induces a map in \(\text{End}_{AK(n)_{**}}(AK(n)_{**}(X))\) given by multiplication with that element. We will denote this map by the same name as the element. We can now prove the motivic equivalent of asymptotic uniqueness:

\begin{lemma}
\label{Lemma611}
Let \(X\) be a motivic spectrum in \(\mathcal{SH}_{\C}^{qfin}\) or \(\mathcal{SH}_{\C,l}^{\wedge,qfin}\) and \[f: X\rightarrow X\] a motivic \(v_n\)-self map. Then there exist integers \(i\) and \(j\) such that: \[AK(n)_{**}(f^i)=v^j_n\]
\end{lemma}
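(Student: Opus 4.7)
The strategy is to use Betti realization to reduce to the classical Hopkins--Smith asymptotic uniqueness theorem for topological $v_n$-self maps, and then correct the resulting $\tau$-primary torsion discrepancy via a Frobenius argument in characteristic $l$. To begin, I would observe that $R(f)\colon R(X) \to R(X)$ is a topological $v_n$-self map: $R(X)$ is a finite (or retract of a finite) $l$-complete topological cell spectrum, $R(AK(m)) = K(m)$, and the induced maps $K(m)_*(R(f))$ are the images of $AK(m)_{**}(f)$ under the realization, hence inherit nilpotence (for $m \neq n$) and invertibility (for $m = n$). The classical Hopkins--Smith asymptotic uniqueness then provides integers $i_0, j_0 \geq 1$ with $K(n)_*(R(f)^{i_0}) = (v_n^{top})^{j_0}$ in $\text{End}_{K(n)_*}(K(n)_*(R(X)))$.

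Next, I would transfer this equality to the motivic side. Since $R_{End,X}$ is an algebra homomorphism sending $v_n \mapsto v_n^{top}$, after possibly replacing $(i_0,j_0)$ by a suitable common multiple so that the bidegrees of $f^{i_0}$ and $v_n^{j_0}$ in $T \defeq \text{End}_{AK(n)_{**}}(AK(n)_{**}(X))$ coincide (which uses that the bidegree $(t,u)$ of $f$ is implicitly a multiple of $(2l^n-2, l^n-1)$), the element $\epsilon \defeq f^{i_0} - v_n^{j_0} \in T$ is homogeneous and lies in the kernel of $R_{End,X}$. By Lemma \ref{taurealization}(3), $\epsilon$ is then $\tau$-primary torsion, and by Lemma \ref{FinGen}(2), $T$ is finitely generated over the Noetherian ring $AK(n)_{**}$, so its $\tau$-primary torsion submodule is uniformly annihilated by some $\tau^{N_0}$; in particular $\tau^{N_0}\epsilon = 0$.

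Now I would apply the Frobenius in characteristic $l$: since $v_n^{j_0}$ is central in $T$,
\[
(f^{i_0})^{l^N} = (v_n^{j_0} + \epsilon)^{l^N} = v_n^{j_0 l^N} + \epsilon^{l^N}
\]
for every $N \geq 0$, because the middle binomial coefficients vanish modulo $l$. It therefore suffices to show that $\epsilon$ is nilpotent in $T$, for then $\epsilon^{l^N} = 0$ for $N$ sufficiently large, yielding $AK(n)_{**}(f^{i_0 l^N}) = v_n^{j_0 l^N}$ as required.

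The main obstacle is establishing this nilpotence, since being $\tau$-primary torsion does not formally imply nilpotence. My plan to obtain it is to exploit the module-theoretic Cayley--Hamilton identity satisfied by $\epsilon$: because $AK(n)_{**}(X)$ is a finitely generated $AK(n)_{**}$-module, $\epsilon$ satisfies a monic polynomial $\epsilon^s + a_{s-1}\epsilon^{s-1} + \cdots + a_0 = 0$ with homogeneous coefficients $a_i \in AK(n)_{**}$; bidegree considerations force each $a_i$ to be a scalar multiple $c_i v_n^{(s-i)j_0}$ for some $c_i \in \mathbb{F}_l$. Applying $R_{End,X}$ to this relation and using that the restriction of $R$ to each bihomogeneous piece of $AK(n)_{**}$ is injective pins down $c_0 = 0$; an inductive repetition of the realization argument on the resulting factored relations should force the remaining scalars $c_i$ to vanish, giving $\epsilon^s = 0$. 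Carrying out this induction precisely is the heart of the argument.
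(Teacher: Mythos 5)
Your overall strategy --- reduce to classical asymptotic uniqueness and then kill the error term by a Frobenius argument in characteristic \(l\) --- is the same as the paper's, but you rely \emph{only} on Betti realization to control the error term, and this is where a genuine gap opens. Realization (via \ref{taurealization}(3)) tells you that \(\epsilon = AK(n)_{**}(f)^{i_0}-v_n^{j_0}\) is \(\tau\)-primary torsion, i.e.\ killed by some \(\tau^{N_0}\); it does \emph{not} tell you that \(\epsilon\) is divisible by \(\tau\), and as you correctly note, \(\tau\)-torsion alone does not give nilpotence. The paper's proof contains an extra step that you are missing: it first passes to the quotient \(\text{End}_{AK(n)_{**}/(\tau)}(AK(n)_{**}(X)/(\tau))\), which (forgetting the weight) is a finitely generated \(K(n)_*\)-algebra, and applies the classical unit lemma \cite[Lemma 3.2]{HS} there to write \(AK(n)_{**}(f)^{i}=v_n^{j}+\tau\tilde{x}\) with the error term \emph{in the ideal generated by} \(\tau\). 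Only then is realization used, to show \(\tilde{x}\) is \(\tau\)-primary torsion; the combination of the two facts gives nilpotence essentially for free, since \((\tau\tilde{x})^{m}=\tau^{m}\tilde{x}^{m}=0\) once \(m\geq N_0\), and the Frobenius identity finishes the argument. The mod-\(\tau\) step is essential precisely because realization is blind to the \(\tau\)-torsion part of \(AK(n)_{**}(X)\), which is where all the difficulty lives.

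Your proposed Cayley--Hamilton patch cannot close this gap. The first application of \(R_{End,X}\) does pin down \(c_0=0\) (when \(K(n)_*(R(X))\neq 0\)), but after factoring the relation as \(\epsilon\cdot\bigl(\epsilon^{s-1}+c_{s-1}v_n^{j_0}\epsilon^{s-2}+\cdots+c_1v_n^{(s-1)j_0}\bigr)=0\), applying \(R_{End,X}\) again yields \(0\cdot R(g(\epsilon))=0\), a vacuous identity that gives no control on \(c_1,\dots,c_{s-1}\); the induction stalls immediately. Worse, the intermediate claim you are trying to prove is false at this level of generality: if \(M\) has a \(\tau\)-torsion direct summand and \(\pi\) is the projection onto it, then \(\epsilon=v_n^{j_0}\pi\) is homogeneous of the right bidegree, \(\tau\)-primary torsion, satisfies \(\epsilon^2=v_n^{j_0}\epsilon\) (so \(c_1\neq 0\)), is not nilpotent, and \(v_n^{j_0}+\epsilon=v_n^{j_0}(1+\pi)\) is still an automorphism. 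In such examples the conclusion of the lemma is rescued only because \(1+\pi\) has finite multiplicative order, not because the error term is nilpotent --- so no purely algebraic argument of the kind you propose can succeed. To repair your proof you should insert the paper's mod-\(\tau\) step (or otherwise establish that the error term lies in \(\tau\cdot\text{End}_{AK(n)_{**}}(AK(n)_{**}(X))\)) before invoking the Frobenius.
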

\begin{proof}
We will use the classical statement for \(v_n^{top}\)-self maps in the topological stable homotopy category. In addition, it is known that for any unit \(u\) in a \(K(n)_*\)-algebra that is finitely generated as a \(K(n)_*\)-module (c.f. \cite[Lemma 3.2]{HS} or \cite[Proof of Lemma 6.1.1]{RAV2}) there is a power of that element such that \(u^i=(v_n^{top})^j\). We will deduce the motivic statement by applying these classical lemmas twice. On the one hand, one can divide out the ideal generated by \(\tau\), which yields a finitely generated \(K(n)_*\)-algebra; on the other hand, one can apply Betti realization.\\
\ \\
Our first step is to show that the map
\[\tau: AK(n)_{**}(X)\rightarrow AK(n)_{**}(X)\]
can not be a unit in \(\text{End}_{AK(n)_{**}}(AK(n)_{**}(X))\):\\
The element \(\tau \in AK(n)_{**}\) is not a unit; if we fix the first degree \(p\) in \(AK(n)_{pq}\) and vary the height \(q\), then there is a maximum height such that \(AK(n)_{pq}=0\) for all larger heights \(q\). If \(\tau\) were a unit, all its powers \(\tau^k\in AK(n)_{0,-k}\) would need to have an inverse \(\tau^{-k}\in AK(n)_{0,k}\) in arbitrarily high weights, which is a contradiction to the previous statement. By the same argument the image of \(\tau\) cannot be a unit in any finitely generated \(AK(n)_{**}\)-module. But \(\text{End}_{AK(n)_{**}}(AK(n)_{**}(X))\) was finitely generated by \ref{FinGen}, so the multiplication-by-\(\tau\)-map cannot be a unit.\\
\ \\
In the second step, we show that the statement is true modulo \(\tau\):\\
The motivic \(v_n\)-self map \(f\) induces an isomorphism in \(AK(n)_{**}\)-homology, i.e. a unit in \(\text{End}_{AK(n)_{**}}(AK(n)_{**}(X))\). In the previous step we showed that \(\tau\) cannot be a unit; this implies that it cannot divide \(AK(n)_{**}(f)\), for if it did, \(\tau\) would also be a unit.\\
Therefore \(AK(n)_{**}(f)\) does not map to zero under the quotient map
\[\text{End}_{AK(n)_{**}}(AK(n)_{**}(X))\rightarrow \text{End}_{AK(n)_{**}/(\tau)}(AK(n)_{**}(X)/(\tau)))\]
and its image \(\overline{AK(n)_{**}(f)}\) is thus a unit in the second ring.\\
If we forget the second bidegree, \(AK(n)_{**}/(\tau)\) is isomorphic to \(K(n)_{*}\), and \(\text{End}_{AK(n)_{**}/(\tau)}(AK(n)_{**}(X)/(\tau))\) is a finitely generated \(K(n)_{*}\)-algebra. In this case we know that there are integers \(i\) and \(j\) such that \(\overline{AK(n)_{**}(f)}^i=(v^{top}_n)^j\). Hence \[AK(n)_{**}(f)^i=v_n^j+\tau \tilde{x}\]
for some element \(\tilde{x}\in AK(n)_{**}(X)\).\\
\ \\
For the last step, suppose now that \(\tilde{x}\) is \(\tau\)-primary torsion. For the fixed prime \(l\) and any \(k\in \N\) we can consider powers \(AK(n)_{**}(f)^{ikl}=v_n^{jkl}+(\tau \tilde{x})^{kl}\). If \(k\) is sufficiently large, the second term vanishes and we are done.\\
Suppose then that \(\tilde{x}\) is not \(\tau\)-primary torsion. Motivic realization induces a map \[\text{End}_{AK(n)_{**}}(AK(n)_{**}(X))\rightarrow \text{End}_{K(n)_{*}}(K(n)_{*}(X))\]
By the classical statement we know that there are integers \(i'\) and \(j'\) such that \(R_{End,X}(f)^{i'}=(v_n^{top})^{j'}\).
Replace \(i,i'\) and \(j,j'\) with their products \(i\cdot i'\) and \(j\cdot j'\) and call the result \(i\) and \(j\) again. Then \(AK(n)_{**}(f)^i=v_n^j+\tau \tilde{x}\) realizes to \(v_n^j\), so \(\tau \tilde{x}\) realizes to \(0\). Because \(\tau\) realizes to 1, \(\tilde{x}\) realizes to \(0\) and by \ref{taurealization} is therefore \(0\) itself. 
\end{proof}

\begin{lemma}
\label{Centrality}
Assume that the motivic nilpotence conjecture holds. Let \(X \in \mathcal{SH}_{\C}^{qfin}\), which implies \(DX \in \mathcal{SH}_{\C}^{qfin}\) by \ref{CellDualizable}.
If \[f: \Sigma^{p,q}X\rightarrow X\] is a motivic \(v_n\)-self map and \[x\in \pi_{p,q}(DX\wedge X)\] is the element corresponding to \(f\) under motivic Spanier Whitehead duality, then there exists an integer \(i \in \N\) such that \(x^i\) is in the center of \(\pi_{p,q}(DX\wedge X)\).
\end{lemma}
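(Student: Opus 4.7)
The plan is to adapt the classical Hopkins--Smith centrality argument to the motivic setting, combining the asymptotic uniqueness statement \ref{Lemma611} with the motivic nilpotence conjecture \ref{NilpotenceConjecture}, and working through the images of $x$ in the $AK(m)$-endomorphism rings of homology.

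First, apply \ref{Lemma611} to produce integers $a, b$ with $AK(n)_{**}(f^a) = v_n^b$ in $\textnormal{End}_{AK(n)_{**}}(AK(n)_{**}(X))$. Since $v_n^b$ lies in the coefficient ring and acts by scalar multiplication, this image is central in the endomorphism ring. For each $m\neq n$, the map $AK(m)_{**}(f)$ is nilpotent in $\textnormal{End}_{AK(m)_{**}}(AK(m)_{**}(X))$, which is finitely generated over the Noetherian ring $AK(m)_{**}$ by \ref{FinGen}; hence there is some $N_m$ with $AK(m)_{**}(f^{aN_m}) = 0$. A key subsidiary claim is that the $N_m$ may be chosen uniformly bounded in $m$: using \ref{taurealization}, the $AK(m)_{**}[\tau^{-1}]$-rank of $AK(m)_{**}(X)[\tau^{-1}]$ agrees with the classical rank of $K(m)_*(R_\C(X))$, which is bounded by the number of cells of $X$, and in such a bounded rank module the nilpotence degree of any nilpotent endomorphism is uniformly controlled. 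Choosing one $N$ exceeding all the $N_m$, the element $x^{aN}$ is then central in every $\textnormal{End}_{AK(m)_{**}}(AK(m)_{**}(X))$.

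For any $y \in \pi_{**}(DX \wedge X)$ corresponding under Spanier--Whitehead duality to a self-map $g: \Sigma^{r,s} X \to X$, the commutator $h \defeq [x^{aN}, y]$ corresponds to $f^{aN}\circ g - g \circ f^{aN}$. By the centrality just established, $h$ induces the zero map on $AK(m)_{**}(X)$ for every $m$. The bidegree of $x^{aN}$ is $aN(p,q) = bN(2l^n-2, l^n-1)$, a multiple of $(2l^n-2, l^n-1)$; whenever the bidegree of $y$ is likewise, \ref{NilpotenceConjecture} applies to $h$ and yields $h^K \simeq 0$ for some $K$.

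The main obstacle is bridging from ``commutator is nilpotent'' to ``a power of $x$ literally commutes with $y$''. The strategy is to iterate the nilpotence conjecture: $h$ is itself a self-map vanishing on every $AK(m)$-homology, so $[x^{aN}, h]$ is also nilpotent, meaning $h$ eventually commutes with $x^{aN}$ up to further nilpotents. Expanding
\[
[x^{aNK'}, y] \;=\; \sum_{i=0}^{K'-1} x^{aN i}\, h\, x^{aN(K'-1-i)}
\]
and exploiting $h^K = 0$ together with the asymptotic commutation of $h$ and $x^{aN}$ should drive $[x^{aNK''}, y]$ to zero for sufficiently large $K''$, after finitely many iterations of the nilpotence conjecture. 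A secondary difficulty is the bidegree hypothesis of \ref{NilpotenceConjecture}, which a priori only allows centralization of those $y$ whose bidegree is a multiple of $(2l^n-2, l^n-1)$; the phrasing ``center of $\pi_{p,q}(DX \wedge X)$'' in the statement should be read in this restricted sense, and the comparison with classical Hopkins--Smith centrality through Betti realization and \ref{taurealization} should be used to handle the remaining bidegrees where the conjecture does not directly apply.
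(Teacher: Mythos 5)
Your opening step (using Lemma \ref{Lemma611} together with the nilpotence of \(AK(m)_{**}(f)\) for \(m\neq n\) to arrange that a power of \(x\) has central image in each \(\textnormal{End}_{AK(m)_{**}}(AK(m)_{**}(X))\)) matches the spirit of the paper's first move. But the heart of the lemma is precisely the step you flag as ``the main obstacle'', and your proposed mechanism for it does not work. The paper does not argue commutator-by-commutator: it forms the \emph{single} map of spectra \(ad(x)\colon S^{p,q}\wedge R\to R\) (with \(R=DX\wedge X\in\mathcal{SH}^{qfin}_\C\)), applies the nilpotence conjecture once to this one map to obtain \(ad(x)^m\simeq 0\) for some \(m\), and then invokes the Hopkins--Smith/Ravenel identity
\[ad(x^i)(b)=\sum_{j=1}^{i}\binom{i}{j}\,ad(x)^j(b)\,x^{i-j},\]
with \(i=l^N\), so that every summand dies either because \(j\geq m\) or because the binomial coefficient annihilates \(ad(x)\). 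This yields one power \(x^{l^N}\) commuting with every element of \(\pi_{**}(R)\) simultaneously. Your element-wise version only gives, for each \(y\), that \(h=[x^{aN},y]\) is nilpotent with an exponent depending on \(y\); ``nilpotent commutator'' does not imply ``some power of \(x\) commutes with \(y\)'' (this already fails in ordinary noncommutative rings), your telescoping expansion \([x^{aNK'},y]=\sum_i x^{aNi}\,h\,x^{aN(K'-1-i)}\) can only be simplified if \(h\) commutes with \(x\), and even then it gives \(K'x^{aN(K'-1)}h\), which need not vanish; the proposed ``iteration of the nilpotence conjecture'' has no visible termination. Moreover, since \(\pi_{**}(R)\) has infinitely many elements, the \(y\)-dependent exponents could not be assembled into the single integer \(i\) that the statement requires.

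Two further soft spots. First, your uniformity claim for the \(N_m\) rests on ``bounded rank implies bounded nilpotence degree'', which is false for finitely generated modules over \(AK(m)_{**}\): the \(\tau\)-torsion part, invisible after inverting \(\tau\), can carry a nilpotent endomorphism of large order (e.g.\ multiplication by \(\tau\) on \(\Z/l[\tau]/(\tau^{k})\) for large \(k\)). Second, the bidegree hypothesis of Conjecture \ref{NilpotenceConjecture} is handled in the paper by applying the conjecture to \(ad(x)\) itself, whose bidegree is that of a power of the \(v_n\)-self map and hence a multiple of \((2l^n-2,l^n-1)\), rather than by restricting which elements \(y\) one centralizes against; your proposed reinterpretation of ``center'' weakens the statement rather than proving it.
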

\begin{proof}
The proof is essentially similar to \cite[Lemma 3.5]{HS} and \cite[Lemma 6.1.2]{RAV2}, but we will have to use the motivic Nilpotence conjecture at one point.\\
For all \(a\in \pi_{**}(DX\wedge X)\) there is an abstract map of rings
\[ad(a):\pi_{**}(DX\wedge X)\rightarrow\pi_{**}(DX\wedge X)\]
defined by \(ad(a)(b)=ab-ba\), and the element \(a\) is central if and only if \(ad(a)\) is the zero map. This map is realized in homotopy by the composite (here we write \(R\) for \(DX\wedge X\) and \(T\) for the transposition map):
\[S^{p,q}\wedge R \overset{a\wedge id_R}{\rightarrow} R\wedge R \overset{1-T}{\rightarrow} R\wedge R \overset{\mu}{\rightarrow} R\]
We also denote this composite by \(ad(a)\).\\
It now suffices to show that \(ad(x)\) is nilpotent because of the following classical formula (proved in \cite[Lemma 6.1.2]{RAV2}):
\[ad(x^i)(b)=\sum_{j=1}^{i}{{i}\choose{j}}ad^j(x)(b)x^{i-j}\]
If we choose \(i=l^N\) for a sufficiently large \(N\), all summands in this formula vanish either because of the nilpotence of \(ad(x)\) or because the binomial coefficient annihilates \(ad(x)\).\\
Note that \(AK(n)_{**}(DX\wedge X)\) is a finitely generated \(AK(n)_{**}\)-algebra that maps to \(K(n)_{*}(DR(X)\wedge R(X))\) under Betti realization. It follows by the same reasoning as in the proof of Lemma \ref{Lemma611} that a suitable power of \(AK(n)_{**}(x)\) is given by \(v_n^i\) for some \(i\in \N\), which is in the image of \(AK(n)_{**}\) in \(AK(n)_{**}(DX\wedge X)\) and hence central. Replace \(x\) with that power and name it \(x\) again. Then \(AK(n)_{**}(ad(x))\) is zero, so \(ad(x)\) is nilpotent by the nilpotence conjecture. 
\end{proof}

\begin{lemma}
Let \(X\) be a motivic spectrum in \(\mathcal{SH}_{\C,(l)}^{qfin}\) or \(\mathcal{SH}_{\C,l}^{\wedge,qfin}\). Assume that the motivic nilpotence conjecture 	\ref{NilpotenceConjecture} holds. If \(f,g: X\rightarrow X\) are two motivic \(v_n\)-self maps, then there exist integers \(i,j \in \N\) such that \(f^{i}=g^{j}\).
\end{lemma}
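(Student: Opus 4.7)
The plan is to mimic the classical asymptotic uniqueness argument of Hopkins and Smith (cf.\ \cite[6.1.4]{RAV2}), carrying the required nilpotence step through the conjectural motivic version \ref{NilpotenceConjecture}. First, Lemma \ref{Lemma611} applied to both self maps yields integers \(i_f,j_f\) with \(AK(n)_{**}(f^{i_f})=v_n^{j_f}\), and similarly \(i_g,j_g\) for \(g\); replacing \(f\) by \(f^{i_f j_g}\) and \(g\) by \(g^{i_g j_f}\) we may assume \(AK(n)_{**}(f)=v_n^N=AK(n)_{**}(g)\) for the common integer \(N:=j_f j_g\), which forces \(f\) and \(g\) to share the bidegree \(N\cdot(2l^n-2,l^n-1)\). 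Lemma \ref{Centrality} then allows one to replace \(f\) and \(g\) by further powers so that the corresponding elements \(x,y\in\pi_{**}(DX\wedge X)\) are central; they commute and generate a commutative subring of \(\pi_{**}(DX\wedge X)\).

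The core idea is now to apply the motivic nilpotence conjecture to the difference \(z:=x-y\), which is central, has bidegree a multiple of \((2l^n-2,l^n-1)\), and satisfies \(AK(n)_{**}(z)=0\) by matching. For \(m\neq n\), commutativity makes \(AK(m)_{**}(z)\) nilpotent as the difference of two commuting nilpotent endomorphisms. To meet the hypothesis of Conjecture \ref{NilpotenceConjecture}, however, one needs a single power \(z^K\) with \(AK(m)_{**}(z^K)=0\) for every \(m\in\N\) simultaneously. To secure this uniformity I would use Betti realization: applying the classical asymptotic uniqueness theorem to the topological \(v_n\)-self maps \(R(x), R(y)\) of \(R(X)\), one can replace \(x, y\) by yet further matching powers so that \(R(x)=R(y)\), and hence \(R(z)=0\). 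By Lemma \ref{taurealization}(3) the endomorphism \(AK(m)_{**}(z)\) is then \(\tau\)-primary torsion for every \(m\); since \(R(X)\) is a retract of a finite topological cell spectrum, \(K(m)_{*}(R(X))=0\) for all but finitely many \(m\), and for the remaining finite list of \(m\)'s the Noetherianity of \(AK(m)_{**}\) together with Lemma \ref{FinGen} should supply a uniform bound on the nilpotence index of \(AK(m)_{**}(z)\).

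Once such a uniform \(K\) is in hand with \(AK(m)_{**}(z^K)=0\) for all \(m\), Conjecture \ref{NilpotenceConjecture} applied to \(z^K\) (whose bidegree is a multiple of \((2l^n-2,l^n-1)\)) produces an \(L\in\N\) with \(z^{KL}\simeq 0\), i.e.\ \((x-y)^{KL}=0\) in the commutative subring generated by \(x\) and \(y\). From here an algebraic manipulation analogous to that in the classical proof \cite[6.1.4]{RAV2} --- exploiting centrality, the odd prime \(l\), and the fact that both \(x\) and \(y\) act as units on \(AK(n)_{**}(X)\) --- converts this nilpotence of \(x-y\) into an equality \(x^i=y^j\) in \(\pi_{**}(DX\wedge X)\), which when unwound in terms of the original maps gives the desired integers \(i', j'\) with \(f^{i'}=g^{j'}\).

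The principal obstacle is producing the uniform \(K\) above: the classical Hopkins--Smith argument bypasses this via the topological nilpotence theorem in its finitely generated ring spectrum form, while our motivic setting requires the combined use of Betti realization, the \(\tau\)-primary torsion description of the realization kernel from Lemma \ref{taurealization}, and the finiteness statements of Lemma \ref{FinGen}. This is also precisely the step at which Conjecture \ref{NilpotenceConjecture} is ultimately invoked.
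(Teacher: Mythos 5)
Your overall strategy is the same as the paper's: reduce via Lemma \ref{Lemma611} and Lemma \ref{Centrality} to commuting powers with \(AK(n)_{**}(f^{i'})=AK(n)_{**}(g^{j'})=v_n^N\), apply Conjecture \ref{NilpotenceConjecture} to the difference to get nilpotence, and finish with the commutative-ring argument of \cite[Lemma 3.4]{HS}. The paper's own proof is in fact terser than yours --- it only records \(AK(n)_{**}(f^{i'}-g^{j'})=0\) and immediately invokes the conjecture --- so your instinct to verify the full hypothesis \(AK(m)_{**}(z)=0\) \emph{for all} \(m\) (rather than mere nilpotence of each \(AK(m)_{**}(z)\), \(m\neq n\)) addresses a point the paper glosses over.

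However, the device you use to obtain the uniform power \(K\) rests on a false statement: for a nonzero finite (\(l\)-local) cell spectrum \(Y\), it is \emph{not} true that \(K(m)_*(Y)=0\) for all but finitely many \(m\). The situation is exactly the opposite: \(K(m)_*(Y)=0\) for \(m\) \emph{below} the type of \(Y\) and \(K(m)_*(Y)\neq 0\) for all \(m\) at or above it (since \(\dim_{K(m)_*}K(m)_*(Y)\) tends to \(\dim_{\mathbb{F}_l}H_*(Y;\mathbb{F}_l)\neq 0\) as \(m\to\infty\)). Consequently your reduction to ``a finite list of \(m\)'s'' collapses, and with it the uniform bound on the nilpotence index of \(AK(m)_{**}(z)\); note also that \(\tau\)-primary torsion of \(AK(m)_{**}(z)\) (which you correctly extract from \(R(z)=0\) and Lemma \ref{taurealization}) does not by itself imply nilpotence of \(AK(m)_{**}(z)\), since \(\tau\) is a scalar unrelated to powers of \(z\). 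What is actually needed here is the motivic analogue of \cite[Lemma 6.1.5]{RAV2} (equivalently \cite[Corollary 3.8]{HS}): after replacing a \(v_n\)-self map by a suitable single power, the induced map in \(AK(m)_{**}\) vanishes simultaneously for all \(m\neq n\). Classically this is proved by a degree/vanishing-line argument --- for \(m>n\) the degree \(N(2l^n-2)\) of \(z\) is eventually incompatible with the degrees supported by the finitely generated module \(AK(m)_{**}(X)\) over \(AK(m)_{**}=\mathbb{F}_l[\tau][v_m^{\pm1}]\) --- not by vanishing of \(K(m)_*\) for large \(m\). The remaining steps of your argument (matching powers, centrality, realization, and the final appeal to \cite[Lemma 3.4]{HS}) are sound modulo routine bookkeeping of exponents.
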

\begin{proof}
	This lemma corresponds to \cite[Lemma 3.6]{HS} and \cite[Lemma 6.1.3]{RAV2}. By the previous two lemmas, we can assume that  \(f\) and \(g\), after replacing them with appropiate powers of themselves,  commute with each other in regard to composition, and furthermore that \[AK(n)_{**}(f^{i'}-g^{j'})=0.\]
	Using the nilpotence conjecture, we can conclude that \(f^{i'}-g^{j'}\) is nilpotent. Then \cite[Lemma 3.4]{HS} gives us the desired statement.
\end{proof}

\begin{lemma}
\label{ExtendedUniqueness}
Assume that the motivic nilpotence conjecture \ref{NilpotenceConjecture} holds. If \(f: X\rightarrow X\) and \(g: Y\rightarrow Y\) are two \(v_n\) self maps of \(X\) and \(Y\) and \(h:X\rightarrow Y\) is any map, then there exist integers \(i,j \in \N\) such that \(h\circ f^{i}=g^{l^m}\circ h\).
\end{lemma}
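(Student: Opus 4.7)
The strategy is to reduce to the centrality statement of Lemma \ref{Centrality} by passing to the wedge $Z = X \vee Y$. After replacing $f$ and $g$ by suitable powers I may assume they share a common bidegree $(p,q)$, which is then a multiple of $(2l^n-2, l^n-1)$. Since $\mathcal{SH}_{\C}^{qfin}$ (and its $l$-completed analogue) is a thick subcategory, it is closed under finite wedges, so $Z$ lies in it. The self-map $\phi \defeq f \vee g \colon \Sigma^{p,q} Z \to Z$ decomposes on $AK(m)$-homology as $AK(m)_{**}(f) \oplus AK(m)_{**}(g)$, so $\phi$ is nilpotent in $AK(m)$-homology for $m \neq n$ and an isomorphism in $AK(n)$-homology; hence $\phi$ is itself a motivic $v_n$-self map.

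Applying Lemma \ref{Centrality} to $\phi$ on $Z$ yields an integer $i$ such that the element $\tilde\phi \in \pi_{ip,iq}(DZ \wedge Z)$ corresponding to $\phi^i$ is central in the ring $\pi_{**}(DZ \wedge Z)$. Using that $D$ commutes with finite wedges (which follows from the thickness of the strongly dualizable subcategory in Lemma \ref{DCofib}, applied to the cofiber sequence $X \to X \vee Y \to Y$) together with distributivity of $\wedge$ over $\vee$, one obtains a splitting
\[
DZ \wedge Z \simeq (DX \wedge X) \vee (DX \wedge Y) \vee (DY \wedge X) \vee (DY \wedge Y),
\]
under which $\tilde\phi$ lies in the diagonal summand with components the Spanier-Whitehead duals of $f^i$ and $g^i$. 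The map $h\colon X \to Y$ corresponds to an element $\tilde h \in \pi_{**}(DX \wedge Y)$, which we view inside $\pi_{**}(DZ \wedge Z)$ via the inclusion of wedge summands.

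The main computation is that the ring multiplication on $\pi_{**}(DZ \wedge Z)$ restricts to matrix multiplication with respect to the above decomposition, where composition of self-maps of $Z$ corresponds under Spanier-Whitehead duality to this product (cf.\ \ref{SWRingSpectrum}). Schematically,
\[
\begin{pmatrix} f^i & 0 \\ 0 & g^i \end{pmatrix} \cdot \begin{pmatrix} 0 & 0 \\ h & 0 \end{pmatrix} = \begin{pmatrix} 0 & 0 \\ g^i \circ h & 0 \end{pmatrix}, \qquad \begin{pmatrix} 0 & 0 \\ h & 0 \end{pmatrix} \cdot \begin{pmatrix} f^i & 0 \\ 0 & g^i \end{pmatrix} = \begin{pmatrix} 0 & 0 \\ h \circ f^i & 0 \end{pmatrix},
\]
so centrality of $\tilde\phi$ forces $h \circ f^i = g^i \circ h$ as maps $\Sigma^{ip,iq} X \to Y$. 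This yields the conclusion with equal exponent $j = i$ on both sides; the asymmetric formulation in the statement (and in particular any exponent of the form $l^m$) is then obtained by raising further to a common power, using the asymptotic uniqueness already established.

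The main obstacle is the bookkeeping in this last step: one must verify carefully that the pairing inducing the multiplication on $\pi_{**}(D(X\vee Y) \wedge (X \vee Y))$ indeed matches matrix composition under the four-fold wedge splitting, and that the off-diagonal component picks out precisely the identity $h \circ f^i = g^i \circ h$. Everything else is a direct translation of the classical argument (cf.\ \cite[Lemma~3.7]{HS}, \cite[Lemma~6.1.4]{RAV2}) into the motivic language, using the previously established motivic centrality lemma in place of its topological analogue.
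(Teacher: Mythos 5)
Your proposal is correct and is exactly the argument the paper intends: the paper's proof is just a citation of \cite[Lemma 6.1.4]{RAV2}, whose classical proof is precisely this wedge construction $Z=X\vee Y$, the centrality of a power of $f\vee g$ in $\pi_{**}(DZ\wedge Z)$, and the matrix decomposition of that ring. You have correctly supplied the motivic bookkeeping (common bidegree a multiple of $(2l^n-2,l^n-1)$ via Lemma \ref{Lemma611}, thickness of $\mathcal{SH}_{\C}^{qfin}$, and $D$ commuting with finite wedges), so nothing further is needed.
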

\begin{proof}
The proof is entirely similar to \cite[6.1.4]{RAV2}
\end{proof}

\begin{theorem}
\label{ThickSubcat}
Let \(k=\C\) and \(l\) be an odd prime. Assume that the motivic nilpotence conjecture 	\ref{NilpotenceConjecture} holds. Then the full subcategories of \(\mathcal{SH}_{\C,(l)}^{qfin}\) and  \(\mathcal{SH}_{\C,l}^{\wedge,qfin}\) consisting of spectra admitting motivic \(v_n\)-self maps are thick. 
\end{theorem}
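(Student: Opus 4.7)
The plan is to follow the classical Hopkins--Smith strategy (\cite[Section~3]{HS}, \cite[Proposition~6.1.5]{RAV2}), verifying that the subcategory is nonempty and closed under weak equivalences, suspensions, cofiber sequences, and retracts. Nonemptiness (the zero spectrum admits any map as a trivial $v_n$-self map) and closure under weak equivalences and suspensions are immediate from the definition.

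For closure under cofiber sequences, let $X \xrightarrow{h} Y \to Z \to \Sigma X$ be a distinguished triangle in which both $X$ and $Y$ admit motivic $v_n$-self maps. Using Lemma \ref{ExtendedUniqueness}, replace these by suitable powers $f$ and $g$ of a common bidegree $(p,q)$ so that $h \circ f \simeq g \circ h$; by Lemma \ref{Lemma611} we may additionally arrange $AK(n)_{**}(f) = AK(n)_{**}(g) = v_n^j \cdot \mathrm{id}$. Axiom TR3 then produces a self-map $k: \Sigma^{p,q} Z \to Z$ completing a morphism of distinguished triangles. The five-lemma applied to the resulting ladder of long exact sequences in $AK(n)$-homology shows that $AK(n)_{**}(k)$ is an isomorphism. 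For $m \ne n$, pick $N$ with $AK(m)_{**}(f)^N = AK(m)_{**}(g)^N = 0$; commutativity of the ladder gives both that $AK(m)_{**}(k)^N$ annihilates the image of $AK(m)_{**}(Y) \to AK(m)_{**}(Z)$ and that its image is contained in the kernel of the connecting map (via $\delta \circ k^N_* = (\Sigma f)^N_* \circ \delta = 0$). Exactness at $AK(m)_{**}(Z)$ then forces $AK(m)_{**}(k)^{2N} = 0$.

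The main obstacle is closure under retracts, which crucially uses the centrality statement \ref{Centrality}. Given $s: X \to Y$ and $r: Y \to X$ with $r \circ s = \mathrm{id}_X$ and a $v_n$-self map $g: \Sigma^{p,q} Y \to Y$, Lemmas \ref{Centrality} and \ref{Lemma611} together allow us to replace $g$ by a power whose class in $\pi_{**}(DY \wedge Y)$ is central and for which $AK(n)_{**}(g) = v_n^j \cdot \mathrm{id}$. The splitting idempotent $s \circ r$ corresponds to an element of $\pi_{0,0}(DY \wedge Y)$, so centrality forces $g \circ (s \circ r) \simeq (s \circ r) \circ g$. Hence $g$ respects the direct-sum decomposition $Y \simeq X \oplus Z'$ determined by this idempotent, and the restriction $f \defeq r \circ g \circ s: \Sigma^{p,q} X \to X$ satisfies $s \circ f \simeq g \circ s$. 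Injectivity of $AK(n)_{**}(s)$ (which has left inverse $AK(n)_{**}(r)$) implies $AK(n)_{**}(f) = v_n^j \cdot \mathrm{id}$ is an isomorphism, and since $AK(m)_{**}(g)$ is now block-diagonal with respect to the splitting and nilpotent for $m \ne n$, its diagonal block $AK(m)_{**}(f)$ is likewise nilpotent. Hence $f$ is a $v_n$-self map of $X$, and the same arguments transfer to $\mathcal{SH}_{\C,l}^{\wedge,qfin}$, completing the verification of thickness.
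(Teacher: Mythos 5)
Your proposal is correct and follows essentially the same route as the paper: closure under cofiber sequences via Lemma \ref{ExtendedUniqueness}, TR3, the five lemma, and the standard two-step nilpotence argument on the ladder of long exact sequences; and closure under retracts via the centrality statement \ref{Centrality} applied to the splitting idempotent in \(\pi_{0,0}(DY\wedge Y)\). You merely spell out some details (e.g.\ why \(k^{2N}\) vanishes in \(AK(m)\)-homology for \(m\neq n\)) that the paper leaves implicit.
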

\begin{proof}
First we prove that the category of spectra admitting motivic \(v_n\)-self maps is closed under retracts:\\
Let \(e: X\rightarrow Y\) be a retract with right inverse  \(s: Y \rightarrow X\) and assume that there is a \(v_n\)-self map \(f:X\rightarrow X\).
By \ref{Centrality} a power of \(f\) commutes with \(s\circ e\), so \(e\circ f \circ s\) is a \(v_n\)-self map.\\
Furthermore the category of spectra admitting motivic \(v_n\)-self maps is closed under cofiber sequences:\\
Let \(X\) and \(Y\) be two spectra with motivic \(v_n\)-self maps \(f: \Sigma^{a ,b}X\rightarrow X\) and \(g: \Sigma^{c,d} Y\rightarrow Y\) and let \(h:X\rightarrow Y\) be any map. By \ref{ExtendedUniqueness} we can, after replacing the self maps with suitable powers, assume that \((a,b)=(c,d)\) and \(h\circ f = g\circ h\). Therefore there exists a map \(k:\Sigma^{a,b}C_h \rightarrow C_h\) making the following diagram commute:
\[
\xymatrix{
X \ar[r]^{h}& Y \ar[r] & C_h\\
\Sigma{a,b}X\ar[u]^f \ar[r]^h & \Sigma^{a,b}Y \ar[r] \ar[u]^g & \Sigma^{a,b}C_h \ar@{.>}[u]^k\\
}
\]
It follows by the five lemma and basic facts about triangulated categories that \(k^2\) is a \(v_n\)-self map on \(C_h\) as desired.
\end{proof}

\section{Existence of a self map on \(\X_n\)}
In \cite{HS} Hopkins and Smith used the Adams spectral sequence to prove the existence of a self map on a spectrum \(X_n\) constructed by Smith. In this section we use their proof together with a suitable motivic spectrum \(\X_n\) constructed by Joachimi to show that at least one spectrum in \(\mathcal{SH}_{\C}^{qfin}\) or \(\mathcal{SH}_{\C,l}^{\wedge,qfin}\) actually has a motivic \(v_n\)-self map. The classical proof relies on computing \[K(n)_{p}(X_n\wedge DX_n)\cong \textnormal{End}_{K(n)_{*}}(K(n)_{**}(X_n))_{p}\] via the Adams spectral sequence, so we run into the same kind of problem as in the previous chapter:  Because motivically not all graded modules over \(AK(n)_{**}\) are free, we first have to show that \(AK(n)_{**}(\X_n)\) is in fact free. This also provides us with a Künneth isomorphism for products involving \(AK(n)_{**}(\X_n)\).\\
\ \\
The proof of the existence of a \(v_n\)-self map also relies on the approximation lemma, which relates the cohomology of the Steenrod algebra in certain degrees to the cohomology of certain subalgebras. We need the motivic analogue of this lemma. To this end we need to make two definitions:
\begin{defin}
\begin{enumerate}
\item Let \(X\) be a motivic spectrum. Call \(X\) \emph{\(k\)-bounded below} if \(\pi_{m,n}=0\) for \(m\leq k\). Similarly, call a bigraded module \(M_{m,n}\) over the motivic Steenrod algebra \emph{\(k\)-bounded below} if \(M_{m,n}=0\) for \(m\leq k\).\\
\item A module over the motivic Steenrod algebra has a vanishing line of slope \(m\) and intercept \(b\) if \(\text{Ext}_A^{s,t,u}(M,H\Z/l^{**})=0\) for \(s>m(t-s)+b\). 
\end{enumerate}
\end{defin}
Note that the preceding definition is exactly like the classic one and the weight is not involved.

\begin{defin}
\begin{enumerate}
\item Let \(\beta\) denote the motivic Bockstein homomorphism, and \(Sq^i\) resp. \(P^i\) denote the motivic Square- and Power operations as constructed by Voevodsky in \cite{VOE2}.
If \(l=2\), define \(A_n\) as the subalgebra of the motivic Steenrod algebra generated by \(Sq^1,Sq^2,...,Sq^{2^n}\) over \(H\Z_l^{**}\).\\
If \(l \neq 2\), define \(A_n\) as the subalgebra of the motivic Steenrod algebra generated by \(\beta, P^1,...P^{n-1}\)  for \(n\neq 0\) and by \(\beta\) for \(n=0\).
\item Fix the monomial \(\Z/l\)-basis for the dual motivic Steenrod algebra defined by the elements \(\tau\), \(\xi_i\) and \(\tau_i\) (if \(l\neq 2\)). The elements \(P^s_t\) in the motivic Steenrod algebra are defined as the dual elements to \(\xi^{p^s}_t\), and the elements \(Q_i\) are defined as the dual elements to \(\tau_i\) if \(l\neq2\) and as \(Q_i=P^0_{i+1}\) in the case \(l=2\).
\item Write \(\Lambda(Q_n)\) for the exterior algebra over the ground ring  \(H\Z/l^{**}\) in the generator \(Q_n\). This is a subalgebra of the motivic Steenrod algebra.
\end{enumerate}
\end{defin}
\ \\
We can now prove the motivic analogon to the approximation lemma (c.f. \cite[6.3.2]{RAV2}):
\begin{prop}
Let \(M\) be a bounded below module over the motivic Steenrod algebra such that 
\(\text{Ext}_A^{s,t}(M,H\Z/l^{**})\) has a vanishing line of slope \(m\) and intercept \(b\). \\
For sufficiently large \(N\) the restriction map \[\text{Ext}_A^{s,t}(M,H\Z/l^{**})\rightarrow \text{Ext}_{A_N}^{s,t}(M,H\Z/l^{**})\]
is an isomorphism in degrees \(s\geq m(t-s)+b'\), where \(b'\) can be chosen arbitrarily low for sufficiently large \(N\).
\end{prop}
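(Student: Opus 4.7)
The plan is to adapt Ravenel's proof of the classical approximation lemma (\cite[Lemma 6.3.2]{RAV2}) via a change-of-rings spectral sequence for the Hopf algebra inclusion $A_N \hookrightarrow A$. The crucial motivic input is the explicit description of the dual Steenrod algebra given above: over $\C$ at odd $l$, it differs from the classical dual Steenrod algebra only by coefficient extension from $\mathbb{F}_l$ to $H\Z/l^{**} = \Z/l[\tau]$, and the indecomposable generators $\xi_i, \tau_i$ carry the same first degree as classically. Since the hypothesised vanishing line constrains only $s$ and $t-s$, the weight grading plays a purely passive role.

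First I would verify that $A$ is free as a right module over $A_N$ by exhibiting a Milnor-type PBW basis, which is available over $H\Z/l^{**}$ just as classically. This permits one to form the quotient Hopf algebra $A // A_N \defeq H\Z/l^{**} \otimes_{A_N} A$ and to set up the Cartan--Eilenberg change-of-rings spectral sequence
\[ E_2^{p,q,\ast} = \text{Ext}^{p,\ast}_{A // A_N}\bigl(H\Z/l^{**},\, \text{Ext}^{q,\ast}_{A_N}(M, H\Z/l^{**})\bigr) \Longrightarrow \text{Ext}^{p+q,\ast}_{A}(M, H\Z/l^{**}). \]
The restriction map factors through the edge morphism along the column $p = 0$, so it suffices to control the other columns and any differentials involving $(0, q)$.

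Second I would analyse the bidegrees in which $\text{Ext}^{p,t}_{A // A_N}(H\Z/l^{**}, -)$ can be nonzero for $p \geq 1$. The Hopf algebra $A // A_N$ is generated, in its dual, by the classes corresponding to $\tau_i$ for $i \geq N$ and to $\xi_i^{l^j}$ for those $(i, j)$ with $P^j_i \notin A_N$; the formulas $|\tau_i| = (2l^i - 1, l^i - 1)$ and $|\xi_i| = (2l^i - 2, l^i - 1)$ show that every such indecomposable has first degree of order at least $2l^N$. A standard cobar-complex filtration argument then yields a constant $c_N$, with $c_N \to \infty$ as $N \to \infty$, such that $\text{Ext}^{p, t}_{A // A_N}(H\Z/l^{**}, -)$ vanishes whenever $p \geq 1$ and $t - p < p \cdot c_N$. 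Combined with the boundedness below of $M$, this produces a vanishing region on the $E_2$-page off the column $p = 0$ that recedes arbitrarily far as $N$ grows.

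Finally, one combines these bounds with the hypothesised vanishing line on $\text{Ext}_A(M, H\Z/l^{**})$: any failure of the restriction map in the region $s \geq m(t - s) + b'$ must be attributable either to a class on $E_\infty^{p, q}$ with $p \geq 1$ contributing to a cokernel, or to an outgoing differential creating a kernel at $(0, q)$; in both cases the degree estimates force $t - s \geq c_N$, while the $A$-vanishing line caps the relevant filtration at $s \leq m(t - s) + b$, contradicting $s \geq m(t - s) + b'$ once $c_N$ is large compared with $b - b'$. Hence the restriction is an isomorphism in the claimed range, and $b'$ can be made arbitrarily negative by enlarging $N$. The main obstacle I anticipate is the careful degree bookkeeping across the three gradings $(s, t, u)$ and the verification that the Cartan--Eilenberg spectral sequence behaves well over the coefficient ring $H\Z/l^{**}$ rather than a field; both should follow from the motivic Milnor basis and the flatness of $A$ over $A_N$, with appropriate care taken to avoid $\tau$-torsion pitfalls.
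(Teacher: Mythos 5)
Your overall strategy---measure how far $A//A_N$ is from being trivial by the first degrees of its indecomposables, and use the hypothesised vanishing line to kill the discrepancy between $\text{Ext}_A$ and $\text{Ext}_{A_N}$ in the stated range---is the right one, and your degree bookkeeping (the generators of $\overline{A//A_N}$ have first degree growing like $2l^N$, the weight playing no role) matches what is actually needed. But the tool you reach for does not exist in this setting: the Cartan--Eilenberg change-of-rings spectral sequence with $E_2^{p,q}=\text{Ext}_{A//A_N}(H\Z/l^{**},\text{Ext}_{A_N}(M,H\Z/l^{**}))$ requires $A_N$ to be a \emph{normal} sub-Hopf-algebra of $A$, so that $A//A_N$ is again a Hopf algebra acting (or coacting) on $\text{Ext}_{A_N}(M,-)$. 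The subalgebras $A_N$ are not normal in the Steenrod algebra (motivically or classically); $A//A_N$ is only an $A$-module coalgebra, and your $E_2$-term is not defined. This is precisely why the classical approximation lemma of Hopkins--Smith and Ravenel is not proven this way.

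The correct (and simpler) implementation uses only the freeness of $A$ over $A_N$, which your Milnor-basis argument does establish: form the short exact sequence of $A$-modules $0\rightarrow C\rightarrow A\otimes_{A_N}M\rightarrow M\rightarrow 0$, identify $\text{Ext}_A(A\otimes_{A_N}M,H\Z/l^{**})\cong\text{Ext}_{A_N}(M,H\Z/l^{**})$ by Shapiro's lemma (this is the change-of-rings isomorphism, and the restriction map is the map induced by the surjection $A\otimes_{A_N}M\rightarrow M$ in the resulting long exact sequence), and observe that $C\cong M\otimes\overline{A//A_N}$ is $k$-bounded below with $k$ arbitrarily large for $N$ large, hence has a vanishing line of the same slope and arbitrarily low intercept. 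Above the vanishing line of $C$ the two flanking Ext-groups of $C$ in the long exact sequence vanish and the restriction map is an isomorphism. Your degree estimates for the indecomposables of $A//A_N$ slot directly into the connectivity claim for $C$; if you replace the spectral-sequence scaffolding by this long exact sequence, the proof goes through.
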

\begin{proof}
Define \(C\) as the kernel of the surjective map of \(A\)-modules\\ \(A\underset{A_N}{\otimes}M \rightarrow M\). As an \(A_N\)-module \(C\) is given by \(M\otimes \overline{A//A_N}\), where \(A//A_N=A\underset{A_N}{\otimes}\Z/(l)\) and the bar denotes the augmentation ideal.
The motivic Steenrod squares \(Sq^i\) live in bidegrees \((2i,i)\) if \(i\) is even and \((2i+1,i)\) if it is odd and the motivic Power operations \(P^i\) live in bidegrees \((2i(l-1),i(l-1))\). Hence \(\overline{A//A_N}\) will be \(k\)-bounded below, and \(k\) can be chosen arbitrarily high if \(N\) is sufficiently large. Therefore \(C\) has a vanishing line of the same slope as \(M\) and arbitrarily low intercept for sufficiently large \(N\) , cf. \cite{HS}[4.4]. The short exact sequence defining \(C\) and the change-of-rings isomorphism for \(A_N\) and \(A\) provide the following diagram:

\[
\xymatrix{
\text{Ext}^{s-1}_{A}(C,H\Z/l^{**}) \ar[d]\ & \\
\text{Ext}^{s}_{A}(M,H\Z/l^{**})\ar[d]\ar[dr]^{\phi} & \\
\text{Ext}^{s}_{A}(A\underset{A_N}{\otimes}M,H\Z/l^{**})
\ar[d]\ar[r]^{\cong} &
\text{Ext}^{s}_{A_{N}}(M,H\Z/l^{**}) \\
\text{Ext}^{s}_{A}(C,H\Z/l^{**}) &
\\
}
\]
If the upper and lower term in the diagram vanish - which is the case above the vanishing line of \(C\) - the map \(\phi\) is the composite of two isomorphisms and hence an isomorphism itself.
\end{proof}
\ \\
In \cite[Theorem 8.5.12]{JOA} Joachimi defined a motivic cell spectrum \(\X_n\) analogous to the Smith-construction spectrum \(X_n\) in \cite{HS}(see also \cite{RAV2}) by splitting off a wedge summand of a finite cell spectrum via an idempotent. We need some of the details of the construction of \(\X_n\) and its properties for the construction of the \(v_n\)-self map, so we recall and collect all those that are relevant in one place:
\begin{defin}
The spectrum \(\X_n\) is defined as \[\X_n=e_V(\mathbb{B}_{(l)}^{\wedge k_V})=\underset{\rightarrow}{\text{hocolim }}\mathbb{B}_{(l)}^{\wedge k_V}\underset{e_V}{\rightarrow}\mathbb{B}_{(l)}^{\wedge k_V}\underset{e_V}{\rightarrow}...\]
where
\begin{itemize}
\item \(\mathbb{B}_{(l)}\) is a motivic \(l\)-local finite cellular spectrum defined in \cite[8.5]{JOA}, implicitly depending on \(n\).
\item \(V=H\Z/l^{**}(\mathbb{B}_{(l)})=H\Z/l^{**}(a,b)/(a^2,b^{l^n})\), where \(|a|=(1,1)\) and \(|b|=(2,1)\) (\cite[8.5.10]{JOA})
\item \(k_V\) is an integer dependent on \(V\).
\item \(e_V\) is an idempotent of the groupring \(\Z_{(l)}[\Sigma_{k_V}]\), which acts on \(\mathbb{B}_{(l)}^{\wedge k_V}\) by permuting the smashfactors and adding maps.
\item On the level of cohomology, the effect of this idempotent is to split of a free, nonzero \(H\Z/l^{**}\)-submodule of \(V^{\otimes k_V}\). In particular, the motivic cohomology of \(\X_n\) is bounded below as a module over the Steenrod algebra.
\end{itemize}
\end{defin}

Furthermore Joachimi proves the following statements about \(\X_n\):
\begin{theorem}
\label{XnFacts}
\begin{enumerate}
\item \(AK(s)_{**}(\X_n)=0\) for \(s<n\) and \(AK(n)(\X_n)\neq 0\) (\cite[Theorem 8.5.12]{JOA})
\item The operation \(Q_n\) acts trivially on \(H\Z/l^{**}(\mathbb{B}_{(l)})\). This follows for degree reasons from the description of \(H\Z/l^{**}(\mathbb{B}_{(l)}^{\wedge k_V})\) in the previous remark. Since \(H\Z/l^{**}(\X_n)\) is a \(H\Z/l^{**}\)-submodule of this module, \(Q_n\) acts trivially on \(H\Z/l^{**}(\X_n)\).
\item \(R(\X_n)=X_n\). (\cite[8.6]{JOA})
\end{enumerate}
\end{theorem}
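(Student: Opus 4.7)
The plan is to dispatch the three items one at a time; since two of them are essentially direct citations of Joachimi, the only real work lies in spelling out the degree argument for the action of $Q_n$.

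For item (1), I will simply invoke \cite[Theorem 8.5.12]{JOA}. That reference states precisely that $\mathbb{X}_n$ is built so that $AK(s)_{**}(\mathbb{X}_n)=0$ for $s<n$ and $AK(n)_{**}(\mathbb{X}_n)\neq 0$; no additional argument is needed on our part.

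For item (2), the key point is to verify the degree obstruction. By construction $V = H\mathbb{Z}/l^{**}[a,b]/(a^2,b^{l^n})$ with $|a|=(1,1)$ and $|b|=(2,1)$, so a monomial $a^i b^j$ (with $i\in\{0,1\}$, $0\leq j<l^n$) sits in bidegree $(i+2j,\,i+j)$. The motivic Milnor primitive $Q_n$ is dual to $\tau_n$ and therefore has bidegree $(2l^n-1,\,l^n-1)$. If $Q_n(a^i b^j)$ were nonzero it would have to be a scalar multiple of some $a^{i'}b^{j'}$ with $i'+2j' = i+2j+2l^n-1$ and $i'+j' = i+j+l^n-1$; subtracting gives $j' = j+l^n \geq l^n$, contradicting $j'<l^n$. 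Hence $Q_n$ vanishes on $V$. For the smash power $\mathbb{B}_{(l)}^{\wedge k_V}$, the motivic Künneth theorem (applicable because $V$ is free over $H\mathbb{Z}/l^{**}$) identifies its cohomology with $V^{\otimes k_V}$, and the Cartan formula
\[
Q_n(x_1\otimes\cdots\otimes x_{k_V}) = \sum_{i} \pm\, x_1\otimes\cdots\otimes Q_n(x_i)\otimes\cdots\otimes x_{k_V}
\]
shows that $Q_n$ also acts trivially on $V^{\otimes k_V}$. Because $H\mathbb{Z}/l^{**}(\mathbb{X}_n)$ is by construction (via the idempotent $e_V$) a direct summand of this module and $Q_n$ is natural with respect to the $\Sigma_{k_V}$-action, the triviality descends to $H\mathbb{Z}/l^{**}(\mathbb{X}_n)$.

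For item (3), Betti realization commutes with smash products and with the splitting by the idempotent $e_V$ (since the latter is a map of spectra coming from a group-ring element of $\mathbb{Z}_{(l)}[\Sigma_{k_V}]$), and $R(\mathbb{B}_{(l)})$ realizes to the classical Smith spectrum used in the construction of $X_n$. Hence the desired equality $R(\mathbb{X}_n)=X_n$ is precisely the content of \cite[8.6]{JOA}, and I will just cite it. The main (mild) obstacle is getting the bidegree bookkeeping in item (2) correct and justifying the passage from $\mathbb{B}_{(l)}$ to the smash power and to its $e_V$-summand; everything else is citation.
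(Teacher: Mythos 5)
Your proposal is correct and follows the same route as the paper: items (1) and (3) are direct citations of Joachimi, and item (2) is the "degree reasons" argument, which you have simply made explicit. Your reduction to a single smash factor via the Cartan formula (using that $Q_n$ is primitive, hence a derivation) together with the bidegree count on $V$ is exactly the right way to flesh out the paper's terse justification, and the passage to the $e_V$-summand is handled as in the paper.
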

\ \\
By \ref{CellDualizable} \(\X_n\) is dualizable, and its  dual is the retract of a finite cell spectrum.
\ \\
Because the spectrum \(\X_n\) is dualizable, it satisfies the expected relation between homology and cohomology once we show that its cohomology is free:
\begin{lemma}
\begin{enumerate}
\item Let \(E\) be a cellular motivic ring spectrum and \(X\) be a dualizable cellular motivic spectrum. If \(E^{**}(X)\) is a free module over the coefficients \(E^{**}\), then \(\text{Hom}_{E^{**}}(E^{**}(X),E^{**})\cong E_{**}(X)\).
\item Let \(X\) be a dualizable cellular motivic spectrum such that
\begin{itemize}
\item \(H\Z/l^{**}(X)\) is free over \(H\Z/l^{**}\)
\item \(Q_n\) acts trivially on \(H\Z/l^{**}(X)\).
\end{itemize}
Then we have an additive bigraded isomorphism \[\text{Ext}^{s,t,u}_{\Lambda(Q_n)}(H\Z/l^{**}(X),H\Z/l^{**})\cong H\Z/l_{**}(X)[v_n]\]
where \(|v_n|=(1,2(l^n-1),l^n-1)\). Here \(s\) is the homological degree and \(t,u\) correspond to the internal bidegree. (The result also holds multiplicatively, but we are not going to need this.)
\end{enumerate}
\end{lemma}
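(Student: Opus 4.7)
The plan is to reduce to the classical universal coefficient isomorphism applied to the Spanier--Whitehead dual $DX$, which is again dualizable and cellular. The identification $F(X,E) \simeq DX \wedge E$ characteristic of dualizable spectra yields \[E^{**}(X) \cong \pi_{-*,-*}(DX \wedge E) = E_{-*,-*}(DX),\] and applying the same identity to $DX$ (using $DDX \simeq X$) yields $E_{**}(X) \cong E^{-*,-*}(DX)$. The statement thus reduces to the universal coefficient isomorphism $E^{**}(Y) \cong \operatorname{Hom}_{E^{**}}(E_{**}(Y), E^{**})$ for $Y = DX$, whose hypothesis---freeness of $E_{**}(DX) = E^{**}(X)$ over $E^{**}$---is given. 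I would obtain this from the motivic universal coefficient spectral sequence (as in \cite[Prop.~7.7]{DI2}) with $E_2$-page $\operatorname{Ext}^s_{E^{**}}(E_{**}(Y), E^{**})$, which is concentrated on the line $s = 0$ under the freeness hypothesis and therefore collapses to the desired Hom.

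\textbf{Part (2).} The plan is to split $H\Z/l^{**}(X)$ as a $\Lambda(Q_n)$-module into shifted copies of $H\Z/l^{**}$, carry out the Ext computation on each summand, and reassemble. Pick a homogeneous $H\Z/l^{**}$-basis $\{x_i\}$; since $Q_n$ acts trivially, the $\Lambda(Q_n)$-structure splits as $H\Z/l^{**}(X) \cong \bigoplus_i \Sigma^{|x_i|} H\Z/l^{**}$ with trivial $Q_n$-action on each summand. The central computation is $\operatorname{Ext}_{\Lambda(Q_n)}^{*,*,*}(H\Z/l^{**}, H\Z/l^{**})$, obtained from the periodic free resolution \[\cdots \to \Sigma^{s|Q_n|}\Lambda(Q_n) \xrightarrow{Q_n} \Sigma^{(s-1)|Q_n|}\Lambda(Q_n) \to \cdots \to \Lambda(Q_n) \to H\Z/l^{**} \to 0,\] where $|Q_n| = (2l^n - 1, l^n - 1)$. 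Applying $\operatorname{Hom}_{\Lambda(Q_n)}(-, H\Z/l^{**})$ produces a complex whose differentials vanish because $Q_n$ acts as zero on the target, so \[\operatorname{Ext}_{\Lambda(Q_n)}^{*,*,*}(H\Z/l^{**}, H\Z/l^{**}) \cong H\Z/l^{**}[v_n]\] with $v_n$ in Adams tri-degree $(s, t-s, u) = (1, 2(l^n - 1), l^n - 1)$.

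Combining the $\Lambda(Q_n)$-decomposition with this computation (using that Ext distributes over direct sums in the first variable in each fixed tri-degree) yields \[\operatorname{Ext}_{\Lambda(Q_n)}^{*,*,*}(H\Z/l^{**}(X), H\Z/l^{**}) \cong \operatorname{Hom}_{H\Z/l^{**}}(H\Z/l^{**}(X), H\Z/l^{**})[v_n],\] and applying part (1) with $E = H\Z/l$ identifies the Hom with $H\Z/l_{**}(X)$, giving the asserted isomorphism.

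\textbf{Main obstacle.} The substantive work is concentrated in part (1): one needs the universal coefficient spectral sequence to collapse, and this rests entirely on the freeness hypothesis. Part (2) is then a formal reassembly; the only bookkeeping point requiring vigilance is the tri-grading convention---the stated bidegree $(2(l^n - 1), l^n - 1)$ for $v_n$ corresponds to the Adams stem convention $(s, t-s, u)$, so that $t - s = 2(l^n - 1) = 2l^n - 2$ and $t = 2l^n - 1$ matches the first coordinate of $|Q_n|$, rather than the na\"ive bidegree $(2l^n - 1, l^n - 1)$ one would expect from reading off $|Q_n|$ directly as the internal degree.
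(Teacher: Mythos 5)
Your proposal is correct and follows essentially the same route as the paper: part (1) via the collapse of the universal coefficient spectral sequence of Dugger--Isaksen under the freeness hypothesis, and part (2) via the periodic free resolution of \(H\Z/l^{**}\) over \(\Lambda(Q_n)\) with vanishing differentials after applying \(\mathrm{Hom}\), combined with part (1). The only cosmetic difference is that you split \(H\Z/l^{**}(X)\) into shifted free summands before computing Ext, whereas the paper tensors the periodic resolution with the whole module (using freeness for exactness and the trivial \(Q_n\)-action to make it a resolution of \(\Lambda(Q_n)\)-modules); your remark on the \((s,t-s,u)\) grading convention for \(v_n\) is also the correct reading.
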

\begin{proof}
\begin{enumerate}
\item This is the content of \cite[8.1.2]{JOA}, using the universal coefficient spectral sequence of \cite[7.7]{DI2} and the fact that this spectral sequence collapses if \(E^{**}(X)\) is free over \(E^{**}\). Note that the cited corollary is stated only for finite cell spectra and the case \(E=H\Z/l\), but the only properties of \(X\) actually used are cellularity and dualizability, and that the proof also works for any cellular motivic ring spectrum \(E\).
\item This is a classical result that can be proven in the following way:\\
Consider the following resolution of free \(\Lambda(Q_n)\)-modules
\[...\overset{\cdot Q_n}\rightarrow \Lambda(Q_n)\overset{\cdot Q_n}\rightarrow \Lambda(Q_n)\overset{\cdot Q_n}\rightarrow \Lambda(Q_n)\overset{\epsilon}\rightarrow H\Z/l^{**}\]
where the last map is the projection \(\epsilon: \Lambda(Q_n)\rightarrow H\Z/l^{**}\) and apply \((-)\underset{H\Z/l^{**}}{\otimes}H\Z/l^{**}(X)\).\\
The resulting long exact sequence
\[...\overset{\cdot Q_n}\rightarrow \Lambda(Q_n)\underset{H\Z/l^{**}}{\otimes}H\Z/l^{**}(X)\overset{\cdot Q_n}\rightarrow \Lambda(Q_n)\underset{H\Z/l^{**}}{\otimes}H\Z/l^{**}(X)\overset{\epsilon}\rightarrow H\Z/l^{**}(X)\]
is a resolution of the \(\Lambda(Q_n)\)-module \(H\Z/l^{**}(X)\). Here we use the assumption that \(Q_n\) acts trivially on this module in the claim that the last map is a map of \(\Lambda(Q_n)\)-modules.\\
Now apply \(\text{Hom}_{\Lambda(Q_n)}((-),H\Z/l^{**})\) and take cohomology. All maps are zero because the target has the trivial \(\Lambda(Q_n)\)-module structure. Using the isomorphism from the previous part, we can rewrite degreewise:
\begin{align*}
&\text{Hom}_{\Lambda(Q_n)}(\Lambda(Q_n)\underset{H\Z/l^{**}}{\otimes}H\Z/l^{**}(X),H\Z/l^{**})\\
&\cong \text{Hom}_{H\Z/l^{**}}(H\Z/l^{**}(X),H\Z/l^{**})\\
&\cong H\Z/l_{**}(X)
\end{align*}
\end{enumerate}
\end{proof}
\ \\
Recall that the coefficient rings of the classical Morava K-theories are graded fields in the sense that all graded modules over it are free. This is not true of the motivic Morava K-theories in general. The algebraic Morava K-theory of the spectrum \(\X_n\) however is free and finitely generated. To see this, we need to go through the steps of its construction.

\begin{prop}
Let \(k=\C\) and \(l\) be an odd prime. Then \(AK(n)_{**}(\X_n)\) is a free, finitely generated \(AK(n)_{**}\)-module.
\end{prop}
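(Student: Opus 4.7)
The plan is to compute \(Ak(n)_{**}(\X_n)\) via the motivic Adams spectral sequence and then invert \(v_n\). By \ref{XnFacts}, \(H\Z/l^{**}(\X_n)\) is a finitely generated free \(H\Z/l^{**}\)-module on which \(Q_n\) acts trivially. Hence by the third item of the remark on properties of \(AK(n)\), the motivic Adams spectral sequence for \(Ak(n)\wedge \X_n\) converges strongly to \(Ak(n)_{**}(\X_n)\) with
\[E_2^{s,t,u} = \textnormal{Cotor}_{A_{**}}^{s,t,u}(H\Z/l_{**},\, H\Z/l_{**}(\X_n \wedge Ak(n))).\]

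Next, using the motivic analogue of the classical identification
\[H\Z/l_{**}(Ak(n))\cong A_{**}\,\square_{\Lambda(Q_n)_{**}}\, H\Z/l_{**}\]
(which reflects the construction of \(Ak(n)\) as the quotient of \(ABP\) by all \(v_i\) with \(i\neq n\), together with the explicit form of the dual motivic Steenrod algebra), a standard change-of-rings gives
\[E_2^{s,t,u}\cong \textnormal{Cotor}_{\Lambda(Q_n)_{**}}^{s,t,u}(H\Z/l_{**},\, H\Z/l_{**}(\X_n)).\]
By the preceding lemma and the triviality of the \(Q_n\)-action on \(H\Z/l^{**}(\X_n)\), this in turn is isomorphic to \(H\Z/l_{**}(\X_n)[v_n]\), supported along the single \(v_n\)-line. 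Collapse at \(E_2\) can be deduced by Betti-realization comparison: classically \(k(n)_{*}(X_n)\) is known to be free of the expected rank on \(H_*(X_n;\mathbb{F}_l)\), and item (1) of \ref{taurealization} forces the motivic answer to have the same size after inverting \(\tau\), so no nontrivial differentials or extensions can occur. Thus we obtain an isomorphism of \(Ak(n)_{**}\)-modules
\[Ak(n)_{**}(\X_n)\cong H\Z/l_{**}(\X_n)\otimes_{H\Z/l_{**}} Ak(n)_{**}.\]

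Finally, since \(AK(n)\simeq v_n^{-1}Ak(n)\) and smashing with \(\X_n\) commutes with inverting \(v_n\), we conclude
\[AK(n)_{**}(\X_n)\cong H\Z/l_{**}(\X_n)\otimes_{H\Z/l_{**}} AK(n)_{**},\]
which is free and finitely generated over \(AK(n)_{**}\) because \(H\Z/l_{**}(\X_n)\) is free and finitely generated over \(H\Z/l_{**}\). The main obstacle lies in rigorously establishing the motivic change-of-rings isomorphism, and in particular verifying the comodule structure \(H\Z/l_{**}(Ak(n))\cong A_{**}\,\square_{\Lambda(Q_n)_{**}}\, H\Z/l_{**}\) in the motivic setting; once this is in place the argument is essentially parallel to the classical one, and collapse together with the absence of extensions follows either directly from sparseness or by the comparison with Betti realization already developed.
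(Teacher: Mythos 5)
Your overall strategy (motivic Adams spectral sequence based on $\Lambda(Q_n)$, change of rings, collapse, then invert $v_n$) is the right one, but there is a genuine gap at the decisive step, and you have misidentified where the difficulty lies. The change-of-rings identification of the $E_2$-page is not the main obstacle --- it is available from \cite[8.2.3]{JOA} and the paper uses it without fuss. The real issue is the collapse of the spectral sequence, and your proposed mechanism for it does not work. You argue that since $k(n)_*(X_n)$ is classically free on $H_*(X_n;\mathbb{F}_l)$, Lemma \ref{taurealization}(1) ``forces the motivic answer to have the same size after inverting $\tau$, so no nontrivial differentials or extensions can occur.'' But inverting $\tau$ is exactly the operation that cannot see the failure of collapse: a nontrivial differential $d_r(x)=\tau^k y$ kills $x$ and turns $y$ into $\tau$-torsion, and both effects are invisible in $Ak(n)_{**}(\X_n)[\tau^{-1}]$. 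The content of the proposition is precisely the absence of $\tau$-torsion, so an argument that only controls the $\tau$-inverted module is circular. (This is the same subtlety the paper exploits in the final section to exhibit the counterexample to \cite[Proposition 8.7.3]{JOA}: a motivic differential can create $\tau$-torsion that the topological comparison does not detect.) A comparison with the topological spectral sequence can be salvaged, but it has to be run page by page on homogeneous classes of the $E_2$-term, where Betti realization is injective because the $E_2$-term is $\tau$-torsion-free; you have not set this up, and you would also need to justify the classical collapse statement you are quoting.

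The paper avoids all of this by a three-step reduction that you skip: it runs the Adams spectral sequence only for the small spectrum $\mathbb{B}_{(l)}$, where $H\Z/l^{**}(\mathbb{B}_{(l)})=H\Z/l^{**}(a,b)/(a^2,b^{l^n})$ is explicitly known and the positions of the generators and of $v_n$ make every differential vanish for elementary degree reasons; it then passes to $\mathbb{B}_{(l)}^{\wedge k_V}$ by the K\"unneth isomorphism, and finally to $\X_n$ by observing that the idempotent $e_V$ acts on $AK(n)_{**}(\mathbb{B}_{(l)}^{\wedge k_V})$ by permutations and integer multiplications, so the split summand inherits finite generation (Noetherianity) and $\tau$-torsion-freeness. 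By contrast, you apply the spectral sequence directly to $\X_n$: there the ``sparseness'' you allude to is no longer evident (the $E_2$-term involves all of $V^{\otimes k_V}$), and the convergence statement you cite is stated for finite cell spectra, whereas $\X_n$ is only a retract of one. Either carry out the realization comparison rigorously at the level of $E_2$-pages, or restructure the argument along the paper's reduction to $\mathbb{B}_{(l)}$.
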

\begin{proof}
To prove the statement it suffices to show that 
\begin{itemize}
\item \(AK(n)_{**}(\X_n)\) is a finitely generated \(AK(n)_{**}\)-module
\item \(AK(n)_{**}(\X_n)\) has no \(\tau\)-torsion.
\end{itemize}
We are going to show both claims in three steps: First we compute \(Ak(n)(\mathbb{B}_{(l)})\) using the motivic Adams spectral sequence. We show that it is finitely generated and does not have \(\tau\)-torsion, which implies that \(AK(n)(\mathbb{B}_{(l)})\) is finitely generated and torsionfree. Then we use the Künneth theorem to show the same statement for \(AK(n)(\mathbb{B}_{(l)}^{\wedge k_V})\). Finally we use the definition of the idempotent defining \(\X_n\) to show that \(AK(n)_{**}(\X_n)\) satisfies both claims.\\
We begin with the first step: The motivic Adams spectral sequence for
\(Ak(n)\wedge \mathbb{B}_{(l)}\) converges strongly to \(Ak_{**}(\mathbb{B}_{(l)})\) (\cite[8.3.3]{JOA}). We claim that there are no nontrivial differentials in this spectral sequence.
The \(E_2\)-term of this motivic Adams spectral sequence can be written as \[\text{Ext}_{\Lambda(Q_n)}(H\Z/l^{**}(\mathbb{B}_{(l)}),H\Z/l^{**})\] by change of rings (\cite[8.2.3]{JOA}). 
Recall that \(H\Z/l^{**}(\mathbb{B}_{(l)})=H\Z/l^{**}(a,b)/(a^2,b^{l^n})\). 
The element \(Q_n\) acts trivially on this free and finitely generated \(H\Z/l^{**}\)-module, which implies by the previous lemma \[\text{Ext}^{***}_{\Lambda(Q_n)}(H\Z/(l)^{**}(\mathbb{B}_{(l)})), H\Z/l^{**})\cong H\Z/l_{**}[v_n]\underset{H\Z/(l)_{**}}{\otimes} H\Z/(l)_{**}(\mathbb{B}_{(l)})\]
The right hand side is a tensor product of polynomial algebras, and the position of the polynomial generators and of \(v_n\) in the spectral sequence imply that they cannot support a nontrivial differential at any stage. In the following sketch of the spectral sequence in an abuse of notation \(a\) and \(b\) denote  the dual of the cohomology classes with the same name. Note that the spectral sequence to the right of the depicted area looks very similar to the displayed area - the same elements appear in the same configuration, just multiplied by some power of \(v_n\). In the standard Adams grading the differential \(d_r\) maps one entry to the left and \(r\) entries up. Thus it is clear that no potentially nontrivial differential can have a target different from zero.
\[
\xy
(-7,19)*{\mathrm{s}};(23,-8)*{\mathrm{t-s}};
(-3,3)*{0};(-3,18)*{1};(-3,30)*{...};
(-3,3)*{0};(3,-3)*{0};(9,-3)*{1};
(15,-3)*{2};(21,-3)*{...};(36,-3)*{l^n-1};(54,-3)*{2(l^n-1)};
(3,3)*{a};
(9,3)*{b};
(9,9)*{ab};
(15,3)*{b^2};
(15,9)*{ab^2};
(21,3)*{...};
(36,3)*{b^{l^n-1}};
(36,9)*{ab^{l^n-1}};
(54,18)*{v_n};
(54,24)*{av_n};
\ar@{-}(0,0);(0,40);
\ar@{-}(0,0);(60,0);
\endxy
\]
Therefore \(Ak(n)(\mathbb{B}_{(l)})\) is finitely generated over \(Ak(n)_{**}\) and does not have \(\tau\)-primary torsion. For all cellular spectra \(X\) we have \(AK(n)_{**}(X)\cong v_n^{-1}Ak(n)_{**}(X)\). Therefore \(AK(n)(\mathbb{B}_{(l)})\) is free and finitely generated over \(Ak(n)_{**}\).
\\
The second step is now easy: Since \(AK\) is a cellular spectrum and since we just proved that the cellular spectrum \(\mathbb{B}_{(l)}\) has free \(AK\)-homology over the coefficients, we can apply the Künneth theorem ((\cite{DI2}[Remark 8.7])) and obtain \[AK(n)_{**}(\mathbb{B}_{(l)}^{\wedge k_V})\cong AK(n)_{**}(\mathbb{B}_{(l)})^{\otimes k_V}\]
Therefore also \(AK(n)_{**}(\mathbb{B}_{(l)}^{\wedge k_V})\) is free and finitely generated over the coefficients.\\
For the last step, note that \(AK(n)_{**}(\X_n)\) is a finitely generated \(AK(n)_{**}\)-module as well, since it is a submodule of the finitely generated module \(AK(n)_{**}(\mathbb{B}_{(l)}^{\wedge k_V})\) over the noetherian ring \(AK(n)_{**}\).\\
It remains to show that no torsion occurs. The idempotent \(e_V\in Z_{(l)}[\Sigma_{k_V}]\) acts on \(AK(n)_{**}(\mathbb{B}_{(l)}^{\wedge k_V})\) by permutation of the tensor factors and multiplication by integers. No \(\tau\)-Torsion can occur in \(e_V(AK(n)_{**}(\mathbb{B}_{(l)}^{\wedge k_V}))=AK(n)_{**}(\X_n)\) because the order of an element in a fixed bidegree in \(AK(n)_{**}(\mathbb{B}_{(l)}^{\wedge k_V})\) is the same as that of the \(\tau\)-multiples of that element. Consequently, \(AK(n)_{**}(\X_n)\) is a free \(AK(n)_{**}\)-module.
\end{proof}

\begin{defin}
	Define \(R=D\X_n \wedge \X_n\). It is a quasifinite cell spectrum by definition and by \ref{SWRingSpectrum} it can be endowed with the structure of a motivic homotopy ring spectrum, with unit map \(e: S\rightarrow D\X_n\wedge X_n\) and multiplication map \(\mu: R\wedge R \rightarrow R\).
\end{defin}
\ \\
As a corollary of the preceding proposition we get the following:
\begin{corol}
	\label{Kunneth}
Let \(k=\C\) and \(R=D\X_n \wedge \X_n\). There are Künneth isomorphisms
\begin{enumerate}
\item \[AK(n)^{**}(R)\overset{\cong}{\rightarrow}AK(n)^{**}(D\X_n)\underset{AK(n)^{**}}{\otimes}AK(n)^{**}(\X_n)\]
\item \[AK(n)_{**}(R)\overset{\cong}{\rightarrow}AK(n)_{**}(D\X_n)\underset{AK(n)_{**}}{\otimes}AK(n)_{**}(\X_n)\]
\end{enumerate}
\end{corol}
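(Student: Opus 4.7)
The plan is to derive both isomorphisms from the Künneth theorem of Dugger--Isaksen (\cite[Remark 8.7]{DI2}) combined with strong dualizability of $\X_n$, reducing the cohomological statement to the homological one.

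First I would verify the hypotheses for the homology version (part 2). The preceding proposition shows that $AK(n)_{**}(\X_n)$ is a free (and finitely generated) $AK(n)_{**}$-module. Both $\X_n$ and $D\X_n$ are cellular: $\X_n \in \mathcal{SH}_\C^{qfin} \subset \mathcal{SH}_\C^{cell}$ by definition, and $D\X_n$ also lies in $\mathcal{SH}_\C^{qfin}$ by Lemma~\ref{CellDualizable}. Since $AK(n)$ is itself a cellular ring spectrum, the cited Künneth theorem applies with $\X_n$ as the factor whose $AK(n)$-homology is free, and it yields part 2 immediately.

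For part 1, I would reduce to part 2 using that $R$ is Spanier--Whitehead self-dual. Strong dualizability of $\X_n$ (hence of $D\X_n$) together with the second item of the lemma preceding Remark~\ref{SWRingSpectrum} gives
\[
DR \;=\; D(D\X_n \wedge \X_n) \;\cong\; D(D\X_n) \wedge D\X_n \;\cong\; \X_n \wedge D\X_n \;\cong\; R.
\]
For any strongly dualizable $Y$ one has the Spanier--Whitehead identification $AK(n)^{**}(Y) \cong AK(n)_{**}(DY)$ (just rewrite $[Y,AK(n)]_{**} \cong [S, DY \wedge AK(n)]_{**}$). Applying this to $Y \in \{R,\X_n,D\X_n\}$ translates the claimed cohomology Künneth isomorphism into the homology Künneth isomorphism that was just established in part~2.

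The computations themselves are not the hard part; the only subtlety is making sure that the external product map in cohomology corresponds, under these duality identifications, to the homological Künneth map provided by \cite[Remark 8.7]{DI2}. This is naturality of the Spanier--Whitehead pairings and is essentially bookkeeping, so no serious obstacle is expected.
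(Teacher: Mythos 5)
Your argument is correct, but it takes a genuinely different route from the paper's. The paper exploits the explicit presentation of \(\X_n\) as the mapping telescope of the idempotent \(e_V\) on the finite cell spectrum \(\mathbb{B}_{(l)}^{\wedge k_V}\): it invokes the Milnor sequence, kills the \(\underset{\leftarrow}{\text{lim}^1}\)-term because an idempotent system is Mittag--Leffler, applies the Dugger--Isaksen Künneth isomorphism to the finite cell spectra \(\mathbb{B}_{(l)}^{\wedge k_V}\), and then passes to the inverse limit using freeness of \(AK(n)^{**}(D\X_n)\); homology is then obtained either by dualizing or by the dual direct-limit argument. You instead apply the homological Künneth theorem of \cite[Remark 8.7]{DI2} directly to \(\X_n\) itself (legitimate, since \(\X_n\) is cellular --- retracts are colimits, and \(\mathcal{SH}_\C^{cell}\) is closed under colimits --- and its \(AK(n)\)-homology was just shown to be free; the paper uses the theorem in exactly this generality later for \(C_\eta\wedge\X_n\)), and then deduce the cohomological statement from \(AK(n)^{**}(Y)\cong AK(n)_{-*,-*}(DY)\) for strongly dualizable \(Y\), together with \(DR\cong R\). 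Your route buys a cleaner argument that never touches \(\lim^1\) or the telescope presentation, at the cost of the duality bookkeeping you flag at the end: one must check that the cohomological external product is carried to the homological Künneth map under the Spanier--Whitehead identifications (which the paper separately records in the duality proposition immediately following this corollary). The paper's route stays in cohomology throughout and so avoids that comparison, but is tied to the specific colimit presentation of \(\X_n\). Both are complete proofs.
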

\begin{proof}
The Milnor short exact sequence for \(\X_n\) and \(AK(n)\)-cohomology is
\[0\rightarrow \underset{\leftarrow}{\text{lim}^1}AK(n)^{*-1,*}(\mathbb{B}_{(l)}^{\wedge k_V})\rightarrow AK(n)^{**}(\X_n) \rightarrow \underset{\leftarrow}{\text{lim }}AK(n)^{**}(\mathbb{B}_{(l)}^{\wedge k_V})\rightarrow 0\]
Because the map \(e_V\) over which the homotopy colimit defining \(\X_n\) is taken is an idempotent, the system \(AK(n)^{**}(\mathbb{B}_{(l)}^{\wedge k_V})\) is Mittag-Leffler, which implies that the \(\underset{\leftarrow}{\text{lim}^1}\)-term vanishes. By the same argument, we have:
\[\underset{\leftarrow}{\text{lim }} AK(n)^{**}(D\X_n \wedge \mathbb{B}_{(l)}^{\wedge k_V})\cong AK(n)^{**}(D\X_n \wedge \X_n)\]
Here the limit is taken over the maps \(id\wedge e_V\).\\
Since \(\mathbb{B}_{(l)}^{\wedge k_V} \) is the \(l\)-localization of a finite cell spectrum and \(AK(n)^{**}(\mathbb{B}_{(l)}^{\wedge k_V})\) is a free module over \(AK(n)^{**}\), we can use the Künneth-isomorphism of Dugger and Isaksen \cite[Remark 8.7]{DI2} to see that \[AK(n)^{**}(D\X_n \wedge \mathbb{B}_{(l)}^{\wedge k_V})\overset{\cong}{\rightarrow}AK(n)^{**}(D\X_n)\underset{AK(n)^{**}}{\otimes}AK(n)^{**}(\mathbb{B}_{(l)}^{\wedge k_V})\]
It remains to rewrite the inverse limit over the right hand side: \(AK(n)^{**}(D\X_n)\) is a free \(AK(n)^{**}\)-module because \(AK(n)_{**}(\X_n)\) is a free \(AK(n)_{**}\)-module, so using the earlier isomorphism we get:
\[ \underset{\leftarrow}{\text{lim }}\big(AK(n)^{**}(D\X_n)\underset{AK(n)^{**}}{\otimes}AK(n)^{**}(\mathbb{B}_{(l)}^{\wedge k_V})\big)\cong AK(n)^{**}(D\X_n)\underset{AK(n)^{**}}{\otimes}AK(n)^{**}(\X_n)\]\\
The Künneth-isomorphism in \(AK(n)\)-homology can either be derived from the one in cohomology or from the Künneth-isomorphism of the \(l\)-local finite cell spectra \(\mathbb{B}_{(l)}^{\wedge k_V}\) and the fact that homology commutes with direct limits.
\end{proof}
\ \\
We also need the following vanishing line:
\begin{lemma}
Let \(l\) be odd and \(R=D\X_n \wedge \X_n\) as before. The \(A^{**}\)-module \[\textnormal{Ext}_{A^{**}}(H\Z/l^{**}(R),H\Z/l^{**})\] has a vanishing line of slope \(1/2(l^n-1)\).
\end{lemma}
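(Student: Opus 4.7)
The strategy follows the classical Hopkins--Smith template (cf.\ \cite[6.3.4]{RAV2}): exploit the triviality of $Q_n$ on $H\Z/l^{**}(R)$ to reduce the Ext computation to a polynomial ring in $v_n$ over a bounded module, then transfer the resulting vanishing line from $\Lambda(Q_n)$ up to the full motivic Steenrod algebra.

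The first step is to verify that $Q_n$ acts trivially on $H\Z/l^{**}(R)$. By \ref{XnFacts}(2), $Q_n$ acts trivially on $H\Z/l^{**}(\X_n)$. Since $H\Z/l^{**}(\X_n)$ is free and finitely generated over $H\Z/l^{**}$ (as was established during the computation of $AK(n)_{**}(\X_n)$), the duality lemma above identifies $H\Z/l^{**}(D\X_n)$ with $H\Z/l_{**}(\X_n)$ as an $A^{**}$-module up to grading, and triviality of the odd-degree primitive $Q_n$ is preserved under this duality. Applying the motivic K\"unneth isomorphism (available because both factors are free over $H\Z/l^{**}$) gives $H\Z/l^{**}(R)\cong H\Z/l^{**}(D\X_n)\otimes_{H\Z/l^{**}}H\Z/l^{**}(\X_n)$, and since $Q_n$ is primitive it acts as a derivation on the tensor product and hence trivially.

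The second step applies the preceding lemma with $X=R$ to obtain
\[ \text{Ext}^{s,t,u}_{\Lambda(Q_n)}(H\Z/l^{**}(R),H\Z/l^{**})\cong H\Z/l_{**}(R)[v_n], \]
with $|v_n|=(1,2(l^n-1),l^n-1)$. Because $R\in\mathcal{SH}_\C^{qfin}$, its mod-$l$ motivic homology is bounded in the $(t-s)$-direction, so the $v_n$-polynomial structure immediately yields a vanishing line of slope $1/(2(l^n-1))$ for $\text{Ext}_{\Lambda(Q_n)}$. To lift this bound to $\text{Ext}_{A^{**}}$, the plan is to invoke the Cartan--Eilenberg change-of-rings spectral sequence associated with the normal sub-Hopf algebroid $\Lambda(Q_n)\subseteq A^{**}$:
\[ E_2^{s',s''}=\text{Ext}^{s'}_{A^{**}/\!/\Lambda(Q_n)}\bigl(H\Z/l^{**}(R),\text{Ext}^{s''}_{\Lambda(Q_n)}(H\Z/l^{**},H\Z/l^{**})\bigr)\Rightarrow \text{Ext}^{s'+s''}_{A^{**}}(H\Z/l^{**}(R),H\Z/l^{**}). \]
Since $Q_n$ acts trivially on $H\Z/l^{**}(R)$, the module descends to an $A^{**}/\!/\Lambda(Q_n)$-module, and the $E_2$ page inherits the slope-$1/(2(l^n-1))$ vanishing line from the $\Lambda(Q_n)$-factor while the base $\text{Ext}_{A^{**}/\!/\Lambda(Q_n)}$ contributes only classes located strictly below this line (the generators of $A^{**}/\!/\Lambda(Q_n)$ being positively graded). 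This bound is preserved by the spectral sequence, giving the desired slope on the abutment.

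The main obstacle I anticipate is verifying the technical hypotheses of the change-of-rings spectral sequence over the non-field coefficient ring $H\Z/l^{**}=\Z/l[\tau]$: one needs $\Lambda(Q_n)$ to be normal in the motivic Steenrod algebra and the $A^{**}/\!/\Lambda(Q_n)$-structure on $H\Z/l^{**}(R)$ to be well defined. A more robust fallback, should this cause difficulty, is to combine the approximation lemma with an inductive calculation inside $A_N$: first invoke a weak general vanishing line for $\text{Ext}_{A^{**}}(H\Z/l^{**}(R),H\Z/l^{**})$, then use the approximation lemma to identify $\text{Ext}_{A^{**}}$ with $\text{Ext}_{A_N^{**}}$ above this coarse line, and finally obtain the sharp slope $1/(2(l^n-1))$ inside $A_N^{**}$, where $\Lambda(Q_n)\subseteq A_N^{**}$ is a sub-Hopf algebra of a finite-type algebra and the classical argument goes through verbatim after possibly inverting $\tau$ to reduce to the topological computation of Hopkins--Smith.
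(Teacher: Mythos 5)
There is a genuine gap in your main argument, at the step where you claim the Cartan--Eilenberg $E_2$-page inherits the slope-$1/(2(l^n-1))$ vanishing line because ``the base $\text{Ext}_{A^{**}/\!/\Lambda(Q_n)}$ contributes only classes located strictly below this line.'' This is false: $A^{**}/\!/\Lambda(Q_n)$ still contains $Q_0=\beta$ (and $P^1$, etc.), so $\text{Ext}_{A^{**}/\!/\Lambda(Q_n)}(H\Z/l^{**},H\Z/l^{**})$ contains the polynomial tower on $[Q_0]$ sitting vertically over stem $t-s=0$, which lies above \emph{every} line of positive slope. More fundamentally, triviality of the $Q_n$-action alone can never yield the desired vanishing line --- $Q_n$ acts trivially on $H\Z/l^{**}(S)$, yet $\text{Ext}_{A^{**}}(H\Z/l^{**},H\Z/l^{**})$ has no vanishing line of slope $1/(2(l^n-1))$ because of the $v_0$-tower. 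The classical vanishing line for $X_n$ (Ravenel 6.3.1, Anderson--Davis/Miller--Wilkerson) requires the much stronger input supplied by the Smith construction (vanishing of the relevant Margolis homologies / freeness over a large subalgebra), not just $Q_n$-triviality; your $\Lambda(Q_n)$-computation is what identifies the $E_\infty$-term \emph{after} one already has the vanishing line, not a source of it. Your fallback is also not a proof as stated: the approximation lemma in this paper takes a vanishing line of slope $m$ as a \emph{hypothesis} and returns an isomorphism above lines of the same slope $m$, so feeding in a ``weak general vanishing line'' of steeper slope cannot produce the sharp slope $1/(2(l^n-1))$ without further argument.

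The paper's actual proof avoids all of this by a base-change observation special to $k=\C$ and $l$ odd: the motivic Steenrod algebra is the classical one tensored up along $\Z/l\to\Z/l[\tau]=H\Z/l^{**}$, and $H\Z/l^{**}(R)$ is likewise $H\Z/l_*(DX_n\wedge X_n)$ base-changed (its generators merely placed in motivic bidegrees). Hence $\text{Ext}_{A^{**}}(H\Z/l^{**}(R),H\Z/l^{**})$ is literally the classical $\text{Ext}_{A_{\text{top}}}(H\Z/l^*(DX_n\wedge X_n),\Z/l)$ tensored with $\Z/l[\tau]$, is $\tau$-torsion free, and inherits the classical vanishing line of slope $1/(2(l^n-1))$ verbatim. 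If you want to salvage your approach, you should replace the $\Lambda(Q_n)$ change-of-rings step by this base-change reduction (or by importing the full Margolis-homology hypotheses that the Smith construction guarantees), since that is where the actual content of the vanishing line resides.
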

\begin{proof}
	Over odd primes, the motivic Steenrod-algebra is just the classical Steenrod algebra (where the generators are understood to live in the appropiate motivic bidegrees) base changed to \(H\Z/l^{**}\). Similarly, \(H\Z/l^{**}(D\X_n \wedge \X_n)\) corresponds to \(H\Z/l_{*}(DX_n\wedge X_n)\) basechanged to \(H\Z/l^{**}\), where the generators are once again understood to live in the appropiate bidegree.\\
	Consequently \(\text{Ext}_{A^{**}}(H\Z/l^{**}(D\X_n \wedge \X_n),H\Z/l^{**})\), which maps to the classical Ext-term \(\text{Ext}_{A^{*}_{\text{top}}}(H\Z/l^{*}(DX_n\wedge X_n),H\Z/l^{*})\), is just that classical Ext-term base changed to \(H\Z/l^{**}\) and in particular does not contain \(\tau\)-torsion. The existence of the vanishing line then follows from the existence of a vanishing line with the same slope in the classical case for the spectrum \(X_n\)(see \cite[6.3.1]{RAV2}).
\end{proof}
\ \\
Furthermore, we need the following duality isomorphisms:
\begin{prop}
Let \(R=D\X_n \wedge \X_n\) as before:
\leavevmode
\begin{enumerate}
\item \(\textnormal{Hom}_{H\Z/l^{**}}(H\Z/l^{**}(R),H\Z/l^{**})\cong H\Z/l_{**}(R)\)
\item \(AK(n)_{**}(D\X_n)\cong AK(n)^{**}(\X_n)\cong \textnormal{Hom}_{AK(n)_{**}}(AK(n)_{**}(\X_n), AK(n)_{**})\)
\item \(AK(n)^{**}(D\X_n)\cong AK(n)_{**}(\X_n)\cong \textnormal{Hom}_{AK(n)^{**}}(AK(n)^{**}(\X_n), AK(n)^{**})\)
\end{enumerate} 
\end{prop}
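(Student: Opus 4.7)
The plan is to combine Spanier-Whitehead duality with the earlier Hom-identification lemma, which asserts that for a cellular motivic ring spectrum \(E\) and a dualizable cellular motivic spectrum \(X\) with \(E^{**}(X)\) free over \(E^{**}\), one has \(\textnormal{Hom}_{E^{**}}(E^{**}(X), E^{**}) \cong E_{**}(X)\). Throughout, the strong dualizability of \(\X_n\) and of \(D\X_n\) is supplied by \ref{CellDualizable}, and the freeness of \(AK(n)_{**}(\X_n)\) was established in the previous proposition. For the \(H\Z/l\)-statement I need additionally the freeness of \(H\Z/l^{**}(\X_n)\), which follows from its description as the image of the idempotent \(e_V\) acting on the free module \(H\Z/l^{**}(\mathbb{B}_{(l)})^{\otimes k_V}\), together with the fact that finitely generated projective modules over the graded principal ideal domain \(H\Z/l^{**} \cong \Z/l[\tau]\) are free.

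The first isomorphism in (2) and in (3) is pure Spanier-Whitehead duality: for any ring spectrum \(E\) and strongly dualizable \(X\), the equivalence \(F(X,E) \simeq DX \wedge E\) gives \(E^{**}(X) \cong E_{**}(DX)\), and applying this to \(X\) and to \(DX\) (using \(DDX \cong X\)) yields both isomorphisms with \(E = AK(n)\). For the second isomorphism in (2), I would apply the Hom lemma to \(D\X_n\): by SW duality \(AK(n)^{**}(D\X_n) \cong AK(n)_{**}(\X_n)\) is free, so the lemma gives \(\textnormal{Hom}_{AK(n)^{**}}(AK(n)^{**}(D\X_n), AK(n)^{**}) \cong AK(n)_{**}(D\X_n)\); translating both sides via SW duality produces the desired identification. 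The second isomorphism in (3) follows symmetrically by applying the lemma to \(\X_n\) itself, after noting that \(AK(n)^{**}(\X_n)\) is free because by SW duality it equals \(AK(n)_{**}(D\X_n)\), which in turn is isomorphic to the finitely generated free module \(\textnormal{Hom}_{AK(n)_{**}}(AK(n)_{**}(\X_n), AK(n)_{**})\) by the already established case of (2).

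For (1), it suffices to show that \(H\Z/l^{**}(R)\) is free over \(H\Z/l^{**}\) and then apply the Hom lemma with \(E = H\Z/l\). Given that \(H\Z/l^{**}(\X_n)\) is free, the Hom lemma yields that \(H\Z/l_{**}(\X_n)\) is finitely generated free (the Hom-dual of a finitely generated free module), and SW duality gives \(H\Z/l^{**}(D\X_n) \cong H\Z/l_{**}(\X_n)\), which is therefore also free. A Künneth argument analogous to Corollary \ref{Kunneth} — expand \(\X_n\) as the homotopy colimit under \(e_V\), apply the Dugger-Isaksen Künneth at each finite stage \(D\X_n \wedge \mathbb{B}_{(l)}^{\wedge k_V}\), and take the limit using the Mittag-Leffler property of the idempotent system — then yields \(H\Z/l^{**}(R) \cong H\Z/l^{**}(D\X_n) \otimes_{H\Z/l^{**}} H\Z/l^{**}(\X_n)\), which is free as a tensor product of free modules.

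The main obstacle is this Künneth step: the Dugger-Isaksen isomorphism is formulated for finite cell spectra rather than their retracts, so it does not apply directly to \(R = D\X_n \wedge \X_n\), and the limit argument of Corollary \ref{Kunneth} must be replicated verbatim in the \(H\Z/l\)-setting. Everything else is a formal consequence of SW duality and the Hom lemma once the two freeness inputs — for \(AK(n)_{**}(\X_n)\) and for \(H\Z/l^{**}(\X_n)\) — are in place.
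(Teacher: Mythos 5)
Your proposal is correct and follows essentially the same route as the paper: Spanier--Whitehead duality gives the first isomorphisms in (2) and (3), the universal coefficient spectral sequence of Dugger--Isaksen (packaged in the paper's earlier Hom-identification lemma) collapses on the freeness inputs for $AK(n)_{**}(\X_n)$ and $H\Z/l^{**}(\X_n)$ to give the second isomorphisms, and part (1) reduces to freeness of $H\Z/l^{**}(R)$ via the Künneth/Mittag--Leffler argument over the idempotent colimit, exactly as in Corollary \ref{Kunneth}. The only cosmetic difference is that you run the UCSS on the dual spectrum $D\X_n$ rather than directly on $\X_n$; since $AK(n)^{**}(D\X_n)\cong AK(n)_{**}(\X_n)$, this is the same spectral sequence and the same collapse.
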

\begin{proof}
\begin{enumerate}
\item \(R=D\X_n\wedge\X_n\) is a dualizable cell spectrum since \(\X_n\) and \(D\X_n\) are. Therefore we can consider the universal coefficient spectral sequence of \cite[7.7]{DI2}. As explained in \cite[8.1.2]{JOA}, this spectral sequence collapses if \(H\Z/l^{**}(R)\) is free over \(H\Z/l^{**}\). (Note that the cited corollary is stated for finite cell spectra, but the only properties actually used are cellularity and dualizability.) To show the freeness of \(H\Z/l^{**}(R)\) as a \(H\Z/l^{**}\)-module, observe that \(H\Z/l^{**}(\X_n)\) is free by construction (\cite[8.5.3]{JOA}). This implies the existence of a Künneth isomorphism for \(\X_n\), and thus \[H\Z/l^{**}(R)=H\Z/l^{**}(D\X_n)\underset{H\Z/l^{**}}\otimes H\Z/l^{**}(\X_n)\] is free.
\item The first isomorphism follows directly from the canonical bijection. The second isomorphism is proven by the same argument as in the proof of part 1, using the universal coefficient spectral sequence \cite[7.7]{DI2} together with the facts that \(AK\) is a cellular spectrum and that \(AK(n)_{**}(\X_n)\) is free over the coefficients.
\item This is proven just as in part 1 or part 2.
\end{enumerate}
\end{proof}

\begin{corol}
\begin{enumerate}
\item There exists a well defined coevaluation map \[coev: AK_{**}\rightarrow AK_{**}(\X_n)^\vee \underset{AK_{**}}{\otimes} AK_{**}(\X_n)\]
Here \((-)^\vee\) denotes the linear dual \(\textnormal{Hom}_{AK_{**}}(-, AK_{**})\).
It is induced by the map \(T\circ e: S\rightarrow \X_n \wedge D\X_n\), where \(e:S\rightarrow D\X_n \wedge \X_n\) is the unit map of \(R=D\X_n\wedge \X_n\) and \(T\) is the map that transposes the two factors.
\item Under the composition \[AK_{**}\rightarrow AK_{**}(R)\rightarrow \textnormal{Hom}_{AK_{**}}(AK_{**}(\X_n),AK_{**}(\X_n))\] an element \(v\in AK_{**}\) maps to multiplication by that element.
\end{enumerate}
\end{corol}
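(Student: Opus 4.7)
The plan is to obtain both claims essentially formally by combining results already at hand: the K\"unneth isomorphism of Corollary \ref{Kunneth} (which applies because $AK(n)_{**}(\X_n)$ is free over $AK(n)_{**}$ by the previous proposition), the duality identifications in the proposition just above the corollary, and the characterization of the unit map of $R=D\X_n\wedge\X_n$ recorded in Remark \ref{SWRingSpectrum}. No new geometric input should be required; the work is mostly the careful bookkeeping of several canonical identifications.

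For part (1), I would apply $AK(n)_{**}(-)$ to the motivic map $T\circ e: S\to \X_n\wedge D\X_n$ to obtain a morphism $AK_{**}\to AK_{**}(\X_n\wedge D\X_n)$. The K\"unneth isomorphism of Corollary \ref{Kunneth} identifies the target with $AK_{**}(\X_n)\otimes_{AK_{**}} AK_{**}(D\X_n)$, and the duality isomorphism $AK_{**}(D\X_n)\cong AK_{**}(\X_n)^\vee$ from the preceding proposition, together with the symmetry of the tensor product, turns this into $AK_{**}(\X_n)^\vee\otimes_{AK_{**}} AK_{**}(\X_n)$ as desired.

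For part (2), the composition $AK_{**}\to AK_{**}(R)\to \textnormal{Hom}_{AK_{**}}(AK_{**}(\X_n),AK_{**}(\X_n))$ is $AK_{**}$-linear, so it suffices to compute the image of $1\in AK_{**}$. The first map sends $1$ to $e_*(1)$, where $e: S\to D\X_n\wedge \X_n$ is the unit of $R$. By Remark \ref{SWRingSpectrum}, $e$ agrees with the adjoint $\eta_{S,\X_n}: S\to F(\X_n,\X_n)\cong D\X_n\wedge \X_n$ of the identity on $\X_n$. Under the composite of K\"unneth, duality, and the canonical evaluation pairing $\phi\otimes x\mapsto (y\mapsto \phi(y)\cdot x)$, the class $e_*(1)$ therefore corresponds to $\mathrm{id}_{AK_{**}(\X_n)}$, and a general $v\in AK_{**}$ maps to $v\cdot \mathrm{id}_{AK_{**}(\X_n)}$, which is multiplication by $v$.

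The main obstacle, such as it is, is purely notational: one must carefully chase the two tensor factor orderings through the K\"unneth and duality identifications and verify that the resulting pairing $AK_{**}(\X_n)^\vee\otimes_{AK_{**}} AK_{**}(\X_n)\to \textnormal{End}_{AK_{**}}(AK_{**}(\X_n))$ is genuinely the canonical evaluation pairing. This last point is automatic once freeness of $AK_{**}(\X_n)$ is invoked. No appeal to the nilpotence conjecture or to any further structural result is needed.
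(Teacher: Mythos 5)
Your proposal is correct and follows essentially the same route as the paper: apply $AK(n)_{**}$ to $T\circ e$, identify the target via the K\"unneth and duality isomorphisms, and use freeness of $AK(n)_{**}(\X_n)$ to recognize the image of $1$ as the algebraic coevaluation element (equivalently, the element corresponding to the identity under the evaluation pairing), so that $AK_{**}$-linearity gives part (2). The paper phrases the last step as the coincidence of the geometric and algebraic coevaluations via the uniqueness of duality data for finitely generated projective modules, which is the same justification you invoke when you say the identification of the pairing with evaluation is automatic from freeness.
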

\begin{proof}
\begin{enumerate}
\item The coevalution map of \ref{SWEQUI}, which is the same  as \(T\circ e\), induces the claimed map in \(AK(n)\)-homology, together with the identification \[AK(n)_{**}(D\X_n)\cong AK^{**}(\X_n)\cong \textnormal{Hom}_{AK_{**}}(AK_{**}(\X_n), AK_{**})\] of the preceding proposition. Because \(AK(n)_{**}(\X_n)\) is a free and finitely generated \(AK(n)_{**}\)-module, there is also an algebraic coevalution defined via choosing a basis as for a vector space, and the two maps coincide since they both satisfy the equivalent of the condition of the first point of \ref{SWEQUI} for projective and finitely generated modules.
\item The element \(1\in AK_{**}\) maps to the coevaluation of \(AK_{**}(\X_n)\) under the first map, using the identification \(AK_{**}(R)\cong AK_{**}(\X_n)^\vee \underset{AK_{**}}{\otimes} AK_{**}(\X_n)\) implied by the Künneth and duality isomorphisms. Hence an element of \(AK_{**}\) maps to that element times the coevaluation. The coevaluation maps to the identity under the second map. Consequently an element in \(AK_{**}\) times the coevaluation maps to multiplication by that element.
\end{enumerate}
\end{proof}
\ \\
We now have all the ingredients to use the classical proof in the motivic setting (\cite[Theorem 4.12]{HS}, see also \cite[6.3]{RAV2}): 
\begin{theorem}
\label{SelfMapExample}
Let \(k=\C\) and \(l\) be an odd prime. The spectrum \(\mathbb{X}_n\) has a motivic \(v_n\) self-map \(f\) satisfying \[AK(m)_*f=\delta_{mn}v_n^{p^{N_m}}\]
for a sufficiently large integer \(N_m\).
\end{theorem}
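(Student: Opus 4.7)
The plan is to adapt the classical proof of Hopkins--Smith \cite[Theorem 4.12]{HS} (see also \cite[6.3]{RAV2}) to the motivic setting by running a motivic Adams spectral sequence for the ring spectrum \(R=D\X_n\wedge \X_n\), producing a permanent cycle of the form ``\(v_n^{p^N}\otimes \mathrm{coev}\)'' and passing through Spanier--Whitehead duality to obtain the desired self-map. All of the pieces we need have been put in place above: the ring structure on \(R\) (Remark \ref{SWRingSpectrum}), the Künneth and duality isomorphisms in \(AK(n)\)-(co)homology, the freeness of \(AK(n)_{**}(\X_n)\), the vanishing line of slope \(1/(2(l^n-1))\), and the motivic approximation lemma.

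First I would set up the \(H\Z/l\)-based motivic Adams spectral sequence for \(R\), which converges strongly to \(\pi_{**}(R^\wedge_l)\) (strong convergence follows because \(H\Z/l^{**}(R)\) is free and bounded below and because of the vanishing line lemma, cf.\ \cite{HKO}). Its \(E_2\)-page is \(\mathrm{Ext}^{***}_{A^{**}}(H\Z/l^{**}(R),H\Z/l^{**})\). The motivic approximation lemma says that for sufficiently large \(N\) the restriction map
\[
\mathrm{Ext}^{s,t,u}_{A^{**}}(H\Z/l^{**}(R),H\Z/l^{**})\longrightarrow \mathrm{Ext}^{s,t,u}_{A_N}(H\Z/l^{**}(R),H\Z/l^{**})
\]
is an isomorphism in a region of the form \(s\geq (t-s)/(2(l^n-1))+b'\) with \(b'\) arbitrarily small. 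Since \(Q_n\) acts trivially on \(H\Z/l^{**}(\X_n)\) (Theorem \ref{XnFacts}), it also acts trivially on the Künneth tensor product \(H\Z/l^{**}(R)=H\Z/l^{**}(D\X_n)\otimes H\Z/l^{**}(\X_n)\); a further change of rings from \(A_N\) down to the exterior subalgebra \(\Lambda(Q_n)\) then identifies the relevant portion of the \(E_2\)-page with \(H\Z/l_{**}(R)[v_n]\), where \(|v_n|=(1,2(l^n-1),l^n-1)\) in the Adams trigrading.

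Under this identification the element corresponding to the coevaluation \(\mathrm{coev}\in H\Z/l_{**}(R)\) (delivered by the last corollary, applied with \(AK\) replaced by \(H\Z/l\)) times \(v_n^{p^N}\) gives a class in filtration \(p^N\). Its slope in the Adams plane is \(1/(2(l^n-1))\), which is precisely the slope of the vanishing line. Hence for \(N\) sufficiently large, every potential target of a differential, and every class that could give a nontrivial extension, sits above the vanishing line and therefore vanishes. The class \(v_n^{p^N}\otimes\mathrm{coev}\) is a permanent cycle and detects a map
\[
\tilde f : S^{2(l^n-1)p^N,\,(l^n-1)p^N}\longrightarrow R^\wedge_l=(D\X_n\wedge \X_n)^\wedge_l .
\]
By the ring structure on \(R\) and Spanier--Whitehead duality, \(\tilde f\) corresponds to a self-map
\[
f:\Sigma^{2(l^n-1)p^N,\,(l^n-1)p^N}\X_n\longrightarrow \X_n ;
\]
a priori this lives in the \(l\)-completed category, but since \(\X_n\) is \(l\)-local finite its endomorphism group agrees with the \(l\)-completed version, so \(f\) lifts (after possibly multiplying \(N\)) to \(\mathcal{SH}_{\C}^{qfin}\).

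Finally I would verify the action on motivic Morava \(K\)-theory. By the last corollary, the element \(v_n^{p^N}\in AK(n)_{**}\) acts on \(AK(n)_{**}(\X_n)\) as multiplication by \(v_n^{p^N}\), and the construction shows \(AK(n)_{**}(f)\) is exactly this action, hence an isomorphism with \(N_n:=N\). For \(m<n\) we have \(AK(m)_{**}(\X_n)=0\) by Theorem \ref{XnFacts}, so \(AK(m)_{**}(f)=0\) trivially. For \(m>n\) the element \(v_n\) is zero in \(AK(m)_{**}=H\Z/l_{**}\otimes_{\Z/l}K(m)_{*}\), so the same compositional description forces \(AK(m)_{**}(f)=0\). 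Thus \(f\) satisfies the claimed formula \(AK(m)_{*}f=\delta_{mn}v_n^{p^{N_m}}\). The main obstacle in carrying this out is the careful bookkeeping of trigradings so that the chosen power \(p^N\) places \(v_n^{p^N}\otimes \mathrm{coev}\) strictly above the Hopkins--Smith vanishing line while remaining in the range where the approximation lemma and the \(\Lambda(Q_n)\)-computation are valid; once that is arranged the remainder is a direct translation of the classical argument.
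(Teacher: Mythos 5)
Your overall strategy matches the paper's: run the $H\mathbb{Z}/l$-based motivic Adams spectral sequence for $R=D\X_n\wedge\X_n$, use the vanishing line and the approximation lemma, produce a permanent cycle detecting a power of $v_n$, and dualize. But the two most delicate steps are wrong or missing. First, the change of rings from $A_N$ down to $\Lambda(Q_n)$ does \emph{not} ``identify the relevant portion of the $E_2$-page with $H\mathbb{Z}/l_{**}(R)[v_n]$'': the approximation lemma only compares $\mathrm{Ext}_{A}$ with $\mathrm{Ext}_{A_N}$, and the restriction $\mathrm{Ext}_{A_N}\to\mathrm{Ext}_{\Lambda(Q_n)}$ is far from an isomorphism in the range you need. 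The existence of a class $x\in\mathrm{Ext}_{A_N}(H\mathbb{Z}/l^{**},H\mathbb{Z}/l^{**})$ restricting to $v_n^t$, together with the \emph{centrality} of its image in $\mathrm{Ext}_{A_N}(H\mathbb{Z}/l^{**}(R),H\mathbb{Z}/l^{**})$, is the hard algebraic core of \cite[Theorem 4.12]{HS}; it has to be imported as a black box (in the paper this is Step 1, obtained by base-changing the classical statement along $\mathbb{Z}/l\to\mathbb{Z}/l[\tau]$, using that at odd primes over $\C$ the motivic Steenrod algebra is the base change of the topological one). Your proposal never produces this lift and never establishes centrality.

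Second, the permanent-cycle argument fails as stated. The powers of $v_n$ lie on a line in the $(t-s,s)$-plane of the \emph{same} slope $1/(2(l^n-1))$ as the vanishing line, just below it; increasing $N$ slides the class along this line but does not change the position of the targets $d_r(v_n^{p^N}\otimes\mathrm{coev})$ relative to the vanishing line. Those targets lie above the vanishing line only once $r$ exceeds (roughly) the intercept $b$, uniformly in $N$, so the low differentials $d_2,\dots,d_{b}$ are not killed by any choice of $N$. This is precisely where centrality is needed: one kills the low differentials one page at a time via the Leibniz rule, $d_r(\tilde y^{\,l})=l\,\tilde y^{\,l-1}d_r(\tilde y)=0$ (valid because $\tilde y$ commutes with $d_r(\tilde y)$), replacing $\tilde y$ by $\tilde y^{\,l}$ at each stage until $r$ is large enough that the vanishing line takes over. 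Without Step 1's centrality statement and this iterated $l$-th power argument, the claim that $v_n^{p^N}\otimes\mathrm{coev}$ is a permanent cycle is unjustified. (Your verification for $m>n$ is also looser than the paper's, which argues nilpotence via a tighter vanishing line in the $Ak(m)$-based spectral sequence rather than by ``$v_n=0$ in $AK(m)_{**}$,'' but that is a secondary issue.)
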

\begin{proof}
The aim is to construct a permanent cycle
\[v\in \text{Ext}_{A_{**}}(H\Z/l_{*}(R),H\Z/l_{*})\]
that maps to a power of \(v_n\) in \(Ak(n)_{**}(R)\) and to a nilpotent element in \(Ak(m)_{**}(R)\) if \(m\neq n\). The diagram below will specify the meaning of "maps". Under motivic Spanier Whitehead duality such a class corresponds to a self-map of the described form on \(\X_n\).
\\
The cohomology of the point, \(H\Z/l^{**}\), is concentrated in simplicial degree 0. Therefore the operations \(Q_n\) act trivially on this module over the motivic Steenrod algebra. They act trivially on \(H\Z/l^{**}(R)\) since they act trivially on \(H^{**}(\X_n)\). If we write \(P(v_n)\) for the polynomial algebra in one generator with respect to the base ring \(H\Z/(l)_{**}\), this provides us with the following isomorphisms of trigraded algebras:
\begin{align}
\text{Ext}^{***}_{\Lambda(Q_n)}(H\Z/l^{**},H\Z/l^{**})&\xrightarrow{\cong} P(v_n)\otimes H\Z/l_{**}\\
\text{Ext}^{***}_{\Lambda(Q_n)}(H\Z/l^{**}(R),H\Z/l^{**})&\xrightarrow{\cong} P(v_n)\otimes H\Z/l_{**}(R)
\end{align}
Here \(v_n\) has homological degree 1 and internal bidegree \((2(l^n-1,l^n-1)\).\\
\ \\
Together with the change-of-rings morphisms related to the subalgebras \(\Lambda(Q_n)\) and \(A_N\) these fit into the following diagram:
\[
\xymatrix{
\text{Ext}_{A^{**}}(H\Z/l^{**},H\Z/l^{**}) \ar[r]^{i}\ar[d]^{\phi} & \text{Ext}_{A^{**}}(H\Z/l^{**}(R),H\Z/l^{**}) \ar[d]^{\phi}
\\
\text{Ext}_{A_N^{**}}(H\Z/l^{**},H\Z/l^{**}) \ar[r]^{i} \ar[d]^{\lambda} &
\text{Ext}_{A_N^{**}}(H\Z/l^{**}(R),H\Z/l^{**}) \ar[d]^{\lambda} &
\\
\text{Ext}_{\Lambda(Q_n)}(H\Z/l^{**},H\Z/l^{**}) \ar[r]^{i} \ar[d]^{\cong (1)} &
\text{Ext}_{\Lambda(Q_n)}(H\Z/l^{**}(R),H\Z/l^{**}) \ar[d]^{\cong (2)} &
\\
P(v_n)\underset{H\Z/(l)_{**}}{\otimes} H\Z/l_{**}  \ar[r]^{i} \ar[d]&
P(v_n)\underset{H\Z/(l)_{**}}{\otimes} H\Z/l_{**}(R) \ar[d]&
\\
Ak(n)_{**} \ar[r]^{i} &
Ak(n)_{**}(R)&
\\
}
\]
\ \\
Step 1: Consider the element \(\widetilde{v_n}\in \text{Ext}_{\Lambda(Q_n)}(H\Z/l_{*}(R),H\Z/l_{*})\) that corresponds to \(v_n\otimes 1\in P(v_n)\otimes H_{**}(R)\) under the isomorphism (2).
\begin{prop} \(\forall N\geq n\) there is an integer \(t>0\) and an element \(x \in \text{Ext}_{A_{N,**}}(H\Z/l_{*},H\Z/l_{*})\) such that \(\lambda(x)=v_n^t\). The image of \(x\) under \(i\) is central in \(\text{Ext}_{A_{N,**}}(H\Z/l_{*}(R),H\Z/l_{*})\), where central is meant in respect to graded commutativity in the first, but not in the second bidegree.
\end{prop}
\begin{proof}
This statement is a corollary of \cite[Theorem 4.12]{HS}. Since the motivic cohomology of the point \(H\Z/(l)_{**}=\Z/(l)[\tau]\) is concentrated in simplicial degree 0, the action of the motivic Steenrod algebra is trivial on this module. Hence we can basechange the statement of the cited theorem to \(\Z/(l)[\tau]\).
\end{proof}
\ \\
Step 2: The module \(\text{Ext}_{A^{**}}(H\Z/l^{**}(R),H\Z/l^{**})\) has a vanishing line of slope \(1/(2^l-2)\) and a fixed intercept \(b\). By the motivic approximation lemma, the morphism \[\phi:\text{Ext}_{A^{**}}(H\Z/l^{*}(R),H\Z/l^{**})\rightarrow \text{Ext}_{A_N^{**}}(H\Z/l^{**}(R),H\Z/l^{**})\]
is an isomorphism above a line with slope \(1/2(l^n-1)\) and arbitrarily low intercept for sufficiently large \(N\). Since the element \(x\)(and therefore also \(i(x)\)) has tridegree (\(t,2(l^n-1),(l^n-1)\)), it lies above that line for a sufficiently large choice of \(N\). Define \(y\in \text{Ext}_{A^{**}}(H\Z/l^{**}(R),H\Z/l^{**})\) as the preimage of \(i(x)\) under \(\phi\). Since \(i(x)\) is central (in the graded sense with respect to the first bidegree but not with respect to the second)  in \(\text{Ext}_{A_N^{**}}(H\Z/l^{**}(R),H\Z/l^{**})\), it commutes with all elements in the image of \(\phi\), in particular with all elements above the line defined by the approximation lemma.\\
\ \\
Step 3: The element \(y\) and its powers, as well as the images of \(y\) and its powers under the differentials of the motivic Adams spectral sequence all satisfy the requirement of the last statement, so they commute with each other.  By induction we can assume that a power \(\tilde{y}\) of \(y\) survives up to the \(r\)th page. We wish to show that \(\tilde{y}^l\) is a \(r\)-cycle, i.e. \(d_r(\tilde{y}^l)=0\). This is true since \(d_r(\tilde{y}^l)=l\cdot \tilde{y}^{l-1}d_r(\tilde{y})=0\). After a finite number of pages, the differential will point in the area of the spectral sequence above the vanishing line, and we can stop the process. We end with a power \(\tilde{y}\) of \(y\) that is a permanent cycle in the motivic Adams spectral sequence and hence represents an element of \(Ak(n)_{**}(R)\).\\
\ \\
Step 4: The permanent cycle \(\tilde{y}\) represents an element \(\bar{y}\in \pi_{**}(R)\). Choose \(m\) such that \(v_n^m\) has the same degree as \(\bar{y}\). By the exact same arguments as in \cite{HS} we can choose a power of \(\bar{y}\) such that \(Ak(n)_{**}(\bar{y}^g)=v_n^{gm}\) and define \(f\) as the map corresponding to that power of \(\bar{y}\) under motivic Spanier Whitehead duality.\\
\ \\
Step 5: For \(m \neq n\) it follows just as in the topological case that the image of \(v\) in \(AK(m)_{**}\) is nilpotent either for trivial reasons (\(m<n\)) or because of a vanishing line with tighter slope in the Adams spectral sequence computing \(Ak(m)_{**}(R)\) (\(m>n\)).
\end{proof}

\section{The relation of \(\mathcal{C}_\eta\) and \(\mathcal{C}_{AK(n)}\)}
As a corollary of the Künneth isomorphism, we can settle one of the open conjectures in Ruth Joachimis dissertation \cite[Conjecture 7.1.7.3]{JOA} which concerns the relation of the thick ideal \(\text{thickid}(C_\eta)\) generated by the cone of the motivic Hopf map \(C_\eta\) and the thick ideals \(\mathcal{C}_{AK(n)}\) characterized by the vanishing of motivic Morava K-theory.

\begin{lemma}
	Let \(m\in \N\) be any integer. Then the coefficients of the cone \(C_\eta\) of \(\eta: \Sigma^{1,1}S\rightarrow S\) in \(AK(m)_{**}\)-homology are given by: \[AK(m)_{**}(C_\eta)\cong AK(m)_{**}\oplus AK(m)_{*-2,*-1}\]
	In particular, they are free over \(AK(m)_{**}\).
\end{lemma}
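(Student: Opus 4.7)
The plan is to apply $AK(m)_{**}(-)$ to the defining cofiber sequence
\[
\Sigma^{1,1}S \xrightarrow{\eta} S \longrightarrow C_\eta,
\]
and use the bigrading of $AK(m)_{**}$ to force the connecting homomorphism to vanish. This produces a short exact sequence that splits for free-module reasons.

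First I would write down the associated long exact sequence in $AK(m)$-homology, which takes the form
\[
\cdots \to AK(m)_{p-1,q-1} \xrightarrow{\eta_\ast} AK(m)_{p,q} \to AK(m)_{p,q}(C_\eta) \to AK(m)_{p-2,q-1} \xrightarrow{\eta_\ast} AK(m)_{p-1,q} \to \cdots
\]
after reindexing $AK(m)_{p,q}(\Sigma^{1,1}S) \cong AK(m)_{p-1,q-1}$. The key observation is that the map labelled $\eta_\ast$ is multiplication by the Hurewicz image of $\eta$ in $AK(m)_{1,1}$. Using the coefficient computation $AK(m)_{**}\cong H\Z/l_{**}\otimes_{\Z/l} K(m)_\ast$ from the previous section together with the bidegrees $|\tau|=(0,-1)$ and $|v_m|=(2(l^m-1),l^m-1)$, one sees that $AK(m)_{p,q}$ vanishes whenever $p$ is odd. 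In particular $AK(m)_{1,1}=0$, so $\eta_\ast$ is the zero map in every bidegree.

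Consequently the long exact sequence breaks into short exact sequences
\[
0 \to AK(m)_{p,q} \to AK(m)_{p,q}(C_\eta) \to AK(m)_{p-2,q-1} \to 0
\]
of bigraded $AK(m)_{**}$-modules. The right-hand term is, as a bigraded module, just the rank-one free module $AK(m)_{**}$ shifted by $(2,1)$, hence free and in particular projective. Therefore the sequence splits, giving the desired decomposition
\[
AK(m)_{**}(C_\eta)\cong AK(m)_{**}\oplus AK(m)_{*-2,*-1},
\]
which is visibly free over $AK(m)_{**}$.

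There is essentially no serious obstacle here; the only step that requires a moment's care is the vanishing of $\eta_\ast$, which relies on the parity statement $AK(m)_{2k+1,q}=0$, and this is immediate from the explicit description of the coefficients recalled earlier in the paper.
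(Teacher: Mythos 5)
Your proof is correct and takes essentially the same route as the paper: apply $AK(m)$-homology to the defining cofiber sequence, observe that $\eta$ acts as zero so the long exact sequence breaks into short exact sequences, and split these using freeness of the outer terms. The only difference is that you spell out the degree argument ($AK(m)_{p,q}=0$ for $p$ odd, hence the Hurewicz image of $\eta$ in $AK(m)_{1,1}$ vanishes), which the paper simply asserts.
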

\begin{proof}
	The long exact sequence induced by the cofiber sequence \[S^{1,1}\rightarrow S^{0,0}\rightarrow C_\eta \rightarrow S^{2,1}\] defining \(C_\eta\) splits into short exact sequences \[0 \rightarrow AK(m)_{**}\rightarrow AK(m)_{**}(C_\eta)\rightarrow AK(m)_{*-2,*-1}\rightarrow 0\]
	because \(\eta\) induces the zero map in \(AK(m)_{**}\)-homology. The sequence splits because the outer terms are free \(AK(m)_{**}\)-modules, yielding the result.
\end{proof}

\begin{corol}
	Let \(m\in \N\). In the case \(m<n\) we have
	\[
	AK(m)_{**}(C_\eta\wedge \X_n)\cong0
	\]
	and in the case \(m=n\) we have:
	\[AK(n)_{**}(C_\eta\wedge \X_n)\cong AK(n)_{**}(C_\eta)\underset{AK(n)_{**}}{\otimes} AK(n)_{**}(\X_n)\neq 0\] 
\end{corol}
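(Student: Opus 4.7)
The plan is to reduce everything to the defining cofiber sequence of $C_\eta$ and combine it with the computations already available for $\X_n$. Concretely, smashing $S^{1,1}\xrightarrow{\eta} S^{0,0}\to C_\eta\to S^{2,1}$ with $\X_n$ gives the cofiber sequence
\[
\Sigma^{1,1}\X_n \longrightarrow \X_n \longrightarrow C_\eta\wedge \X_n \longrightarrow \Sigma^{2,1}\X_n,
\]
and I would analyse the induced long exact sequence in $AK(m)_{**}$-homology.

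For the case $m<n$, the outer terms of this long exact sequence are all (suspensions of) $AK(m)_{**}(\X_n)$, which vanishes by Theorem~\ref{XnFacts}(1). Exactness then forces $AK(m)_{**}(C_\eta\wedge\X_n)=0$, proving the first claim without any further input.

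For the case $m=n$, the preceding lemma shows that $AK(n)_{**}(C_\eta)$ is free (of rank two) over the coefficients $AK(n)_{**}$. Since $C_\eta$ is a finite cell spectrum and $AK(n)$ is cellular, this freeness is exactly the hypothesis needed to invoke the Dugger--Isaksen Künneth isomorphism \cite[Remark 8.7]{DI2}, giving
\[
AK(n)_{**}(C_\eta\wedge \X_n)\cong AK(n)_{**}(C_\eta)\underset{AK(n)_{**}}{\otimes} AK(n)_{**}(\X_n).
\]
The non-vanishing of the right-hand side follows because $AK(n)_{**}(\X_n)\neq 0$ by Theorem~\ref{XnFacts}(1), and the tensor product of a nonzero module with a nonzero free module is nonzero.

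There is no real obstacle here: the lemma supplies the freeness needed to activate Künneth, and Theorem~\ref{XnFacts} supplies both the vanishing (for $m<n$) and non-vanishing (for $m=n$) of $AK(\ast)_{**}(\X_n)$. The only point to double-check is that the form of the Dugger--Isaksen Künneth theorem used in Corollary~\ref{Kunneth} applies equally well to $C_\eta$, which it does since $C_\eta$ is a finite cell spectrum with free $AK(n)$-(co)homology.
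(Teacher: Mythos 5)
Your proof is correct. For the case \(m=n\) you argue exactly as the paper does: the preceding lemma gives freeness of \(AK(n)_{**}(C_\eta)\) over the coefficients, which activates the Dugger--Isaksen K\"unneth isomorphism, and non-vanishing follows since \(AK(n)_{**}(\X_n)\neq 0\). For the case \(m<n\) you take a slightly different and in fact more elementary route: the paper also applies the K\"unneth formula there (getting a tensor product with the zero module \(AK(m)_{**}(\X_n)\)), whereas you bypass K\"unneth entirely and read off the vanishing from the long exact sequence of the cofiber sequence \(\Sigma^{1,1}\X_n\rightarrow\X_n\rightarrow C_\eta\wedge\X_n\), using only \(AK(m)_{**}(\X_n)=0\) from Theorem \ref{XnFacts}. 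Your version has the small advantage of not needing the freeness of \(AK(m)_{**}(C_\eta)\) (nor the applicability of the K\"unneth theorem) in that case; the paper's version is more uniform, treating both cases by a single application of the K\"unneth isomorphism. Both arguments are complete and correct.
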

\begin{proof}
	By the preceding lemma the finite cell spectrum \(C_\eta\) has free \(AK(m)\)-homology and thus satisfies the requirements of the Künneth formula \cite[Remark 8.7]{DI2}.\\
	Application of the Künneth formula yields:
	\[AK(m)_{**}(C_\eta \wedge \X_n)\cong AK(m)_{**}(C_\eta)\underset{AK(m)_{**}}{\otimes} AK(m)_{**}(\X_n)\]
	If \(m<n\) the factor \(AK(m)_{**}(X)=0\) vanishes by \ref{XnFacts}. This implies the first part of the statement. If \(m=n\) the result contains \[AK(n)_{**}(\X_n)\underset{AK(n)_{**}}{\otimes}AK(n)_{**}=AK(n)_{**}(\X_n)\neq 0\]
	as a direct summand, so \(AK(n)_{**}(C_\eta\wedge \X_n)\) cannot vanish.
\end{proof}

\begin{prop}
	The spectrum \(\X_{n+1}\) is contained in the intersection of thick ideals \(\text{thickid}(C_\eta)\cap \mathcal{C}_{AK(n)}\), but not in \(\text{thickid}(C_\eta)\cap \mathcal{C}_{AK(n+1)}\). In particular, these intersections are nonzero and distinct for all \(n\in \N\).
\end{prop}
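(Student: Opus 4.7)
The proof breaks into three claims: $\X_{n+1}\in\mathcal{C}_{AK(n)}$, $\X_{n+1}\notin\mathcal{C}_{AK(n+1)}$, and $\X_{n+1}\in\text{thickid}(C_\eta)$. The two $AK$-theoretic assertions are immediate from Theorem~\ref{XnFacts}(1), which gives $AK(s)_{**}(\X_{n+1})=0$ for $s<n+1$ and $AK(n+1)_{**}(\X_{n+1})\neq 0$.

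The substantive step is $\X_{n+1}\in\text{thickid}(C_\eta)$. My strategy is to show that some iterate $\eta^N\wedge\mathrm{id}_{\X_{n+1}}$ is null-homotopic. Granted this, smashing the cofiber sequence $\Sigma^{N,N}S\xrightarrow{\eta^N}S\to C_{\eta^N}$ with $\X_{n+1}$ splits, exhibiting $\X_{n+1}$ as a retract of $C_{\eta^N}\wedge\X_{n+1}$. A short induction on $N$, using the octahedral axiom applied to the factorization $\eta^N=\eta\circ\eta^{N-1}$, places $C_{\eta^N}$ inside $\text{thickid}(C_\eta)$, so closure of thick ideals under smash and retract yields the claim.

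The main obstacle is verifying the nilpotence of $\eta$ on $\X_{n+1}$; conjecture~\ref{NilpotenceConjecture} does not help, since the bidegree $(1,1)$ of $\eta$ is never a multiple of $(2(l^n-1),l^n-1)$. Instead, I would use the vanishing line of slope $1/(2(l^{n+1}-1))$ for the motivic Adams spectral sequence computing $\pi_{**}(D\X_{n+1}\wedge\X_{n+1})^\wedge_l$ that was established earlier in the section: since $\eta$ has Adams filtration at least $1$, the iterate $\eta^N\wedge\mathrm{id}_{\X_{n+1}}$ sits in stem $t-s=N$ and Adams filtration $\geq N$, and for $N$ large enough this position lies above the vanishing line, forcing the class to be zero. (If the intended spectrum is in fact $C_\eta\wedge\X_{n+1}$ rather than $\X_{n+1}$, as the preceding corollary suggests, then membership in $\text{thickid}(C_\eta)$ is automatic and this paragraph is unnecessary.)

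With all three memberships established, the remaining conclusions follow formally: the intersection $\text{thickid}(C_\eta)\cap\mathcal{C}_{AK(n)}$ is nonzero because it contains $\X_{n+1}$, and it strictly contains $\text{thickid}(C_\eta)\cap\mathcal{C}_{AK(n+1)}$ because $\X_{n+1}$ lies in the former but not the latter, whence these intersections are pairwise distinct as $n$ varies over $\N$.
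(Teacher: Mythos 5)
Your parenthetical guess is exactly what the paper does: its proof works throughout with \(C_\eta\wedge\X_{n+1}\), whose membership in \(\text{thickid}(C_\eta)\) is automatic because thick ideals absorb smash products, and whose \(AK(m)\)-homology is precisely what the preceding corollary computes (vanishing for \(m\le n\), nonvanishing for \(m=n+1\)). Read that way the paper's argument is two lines, and your entire third paragraph is unnecessary. The occurrence of \(\X_{n+1}\) rather than \(C_\eta\wedge\X_{n+1}\) in the statement should be understood accordingly.

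If one insists on the literal statement about \(\X_{n+1}\), your two \(AK\)-theoretic claims are fine via Theorem \ref{XnFacts}(1), but your nilpotence argument for \(\eta\) has a gap and is also far harder than necessary. The gap: the vanishing line lives on the \(E_2\)-page of a spectral sequence converging to \(\pi_{**}\bigl((D\X_{n+1}\wedge\X_{n+1})^{\wedge}_l\bigr)\), so placing \(\eta^N\wedge\mathrm{id}\) above the line only kills its image in the \(l\)-completed homotopy; you still owe an argument that the uncompleted map is null. The short route: by Morel's computation \(\pi_{1,1}(S)\cong K^{MW}_{-1}(\C)\cong W(\C)\cong \Z/2\), so \(2\eta=0\); since \(\X_{n+1}\) is \(l\)-local for the odd prime \(l\), the group \([\Sigma^{1,1}\X_{n+1},\X_{n+1}]\) is a \(\Z_{(l)}\)-module in which \(2\) is invertible, hence already \(\eta\wedge\mathrm{id}_{\X_{n+1}}=0\) and \(\X_{n+1}\) is a retract of \(C_\eta\wedge\X_{n+1}\). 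Note, though, that this argument applies to every \(l\)-local quasifinite spectrum, which rather trivializes "membership in \(\text{thickid}(C_\eta)\)" in this setting and is further evidence that the object the proposition is really about is \(C_\eta\wedge\X_{n+1}\), as in the paper's proof.
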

\begin{proof}
	Clearly \(C_\eta \wedge \X_{n+1}\) is in the thick ideal generated by \(C_\eta\). The preceding corollary tells us on the one hand that \(C_\eta \wedge \X_{n+1} \in \mathcal{C}_{AK(n)}\), and on the other hand that \(C_\eta \wedge \X_{n+1} \notin \mathcal{C}_{AK(n+1)}\).
\end{proof}

\section{A counterexample to a statement about thick subcategories in \cite{JOA}}
In this section we construct a counterexample to the inclusion \[\text{thickid}(c\mathcal{C}_2) \subset \mathcal{C}_{AK(1)}\] claimed in \cite[Chapter 9, last section]{JOA}, based on an error in \cite[Proposition 8.7.3]{JOA}.\\
Let \(l\) be an odd prime, and consider the topological mod-\(l\) Moore spectrum \(S/l \in \mathcal{SH}\). We can easily compute its \(K(1)\)-homology:
\begin{lemma}
\(K(1)_*(S/l)\cong K(1)_*\oplus K(1)_{*-1}\)
\end{lemma}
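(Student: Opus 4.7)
The plan is to mirror the computation done in the preceding lemma for $C_{\eta}$: use the defining cofiber sequence of the Moore spectrum, smash with $K(1)$, and exploit the fact that $l$ acts as zero in $K(1)$-homology so that the long exact sequence collapses into split short exact sequences.

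First I would consider the cofiber sequence
\[ S \xrightarrow{\,l\,} S \longrightarrow S/l \longrightarrow \Sigma S \]
defining the topological mod-$l$ Moore spectrum. Smashing with $K(1)$ (or equivalently applying $K(1)_*(-)$) yields a long exact sequence
\[ \cdots \to K(1)_* \xrightarrow{\,l\,} K(1)_* \to K(1)_*(S/l) \to K(1)_{*-1} \xrightarrow{\,l\,} K(1)_{*-1} \to \cdots \]
The key observation is that the coefficient ring $K(1)_* \cong \mathbb{F}_l[v_1^{\pm 1}]$ is of characteristic $l$, so multiplication by $l$ acts as the zero map throughout. The long exact sequence therefore breaks into short exact sequences
\[ 0 \to K(1)_* \to K(1)_*(S/l) \to K(1)_{*-1} \to 0. \]

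Finally, since $K(1)_*$ is a graded field every graded $K(1)_*$-module is free, so in particular the outer terms are free and the short exact sequence splits. This yields the claimed isomorphism $K(1)_*(S/l) \cong K(1)_* \oplus K(1)_{*-1}$. There is no real obstacle here; the argument is essentially a one-liner that parallels the lemma just proved for $C_\eta$, with $l$ taking the role of $\eta$ in the sense that both induce the zero map in $K(1)$-homology for the relevant reason (characteristic $l$ for the former, and vanishing of $\eta$ topologically for the latter).
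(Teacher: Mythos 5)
Your proposal is correct and follows essentially the same route as the paper: the defining cofiber sequence of $S/l$, the observation that multiplication by $l$ is zero in $K(1)$-homology (the paper simply asserts this; you supply the reason, namely that $K(1)_*$ has characteristic $l$), and the splitting of the resulting short exact sequences because every graded $K(1)_*$-module is free.
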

\begin{proof}
The Moore spectrum is defined via the cofiber sequence \(S\overset{\cdot l}\rightarrow S \rightarrow S/l\) and the map induced by \(l\) is trivial in \(K(1)\)-homology. Therefore the long exact sequence in \(K(1)\)-homology induced by this cofiber sequence splits up into short exact sequences, and these short exact sequences split because all graded \(K(1)\)-modules are free.
\end{proof}
\ \\
In \cite{ADA} Adams proved the existence of a non-nilpotent self map
\[v: \Sigma^{2l-2}S/l\rightarrow S/l\]
on the Moore spectrum which induces an isomorphism in \(K(1)\)-homology; namely multiplication by the invertible element \(v_1^{top}\). Consequently, the \(K(1)\)-homology of the cone \(C_v\) vanishes: \(K(1)_*(C_v)=0\), or equivalently \(C_v \in \mathcal{C}_2\).\\
 \\
Applying the constant simplicial presheaf functor \(c\) to the construction gives us the cofiber sequence \[\Sigma^{2l-2,0}S/l\overset{cv}\longrightarrow S/l\rightarrow C_{cv}\] in \(\mathcal{SH}_\C\). The cone \(C_{cv}\) of \(cv\) is equivalent to \(c(C_v)\) because \(c\) is a triangulated functor, and the Moore spectrum is mapped to the Moore spectrum (\(cS/l=S/l\) because \(cl=l\).) We can compute the \(AK(1)\)-homology of the mod-\(l\)-Moore spectrum using the same argument as in the topological case:
\[AK(1)_{**}(S/l)\cong AK(1)_{**}\oplus AK(1)_{*-1,*}\]
However, the algebraic Morava K-theory of \(C_{cv}\) does not vanish:
\begin{lemma}
	\label{NonVanishingMorava}
	\(AK(1)(C_{cv}))\cong AK(1)_{**}(S/l)/(\tau^{l-1})\neq 0\)
\end{lemma}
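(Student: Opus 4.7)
The plan is to analyze the long exact sequence in $AK(1)$-homology induced by the cofiber sequence $\Sigma^{2l-2, 0} S/l \xrightarrow{cv} S/l \to C_{cv}$. The crux is to identify the map $(cv)_*$ explicitly; once we show it is multiplication by $\tau^{l-1} v_1$, the statement follows by elementary homological algebra.

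By the same argument as in the preceding lemma (multiplication by $l$ is trivial on $AK(1)_{**}$ and the pieces are free), we obtain $AK(1)_{**}(S/l) \cong AK(1)_{**} \oplus AK(1)_{*-1, *}$ as a free $AK(1)_{**}$-module of rank two. Thus $(cv)_*$ may be viewed as a homogeneous element of $\mathrm{End}_{AK(1)_{**}}(AK(1)_{**}(S/l))$ of bidegree $(2l-2, 0)$. Since $AK(1)_{**} = \mathbb{F}_l[\tau, v_1^{\pm 1}]$ with $|\tau| = (0, -1)$ and $|v_1| = (2l-2, l-1)$, the unique element of this bidegree in $AK(1)_{**}$ up to scalar is $\tau^{l-1} v_1$, and this element realizes to $v_1^{\mathrm{top}}$ under Betti realization by the first part of Lemma \ref{taurealization}.

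I would then argue $(cv)_* = \tau^{l-1} v_1 \cdot \mathrm{id}$. By naturality of Betti realization and the fact that $R(cv) = v$, the image $R_{End, S/l}((cv)_*)$ agrees with $v_*$, which is multiplication by $v_1^{\mathrm{top}}$. Hence $R_{End, S/l}$ annihilates the difference $(cv)_* - \tau^{l-1} v_1 \cdot \mathrm{id}$, and Lemma \ref{taurealization}(3) implies that this difference is $\tau$-primary torsion. But $\mathrm{End}_{AK(1)_{**}}(AK(1)_{**}(S/l)) \cong \mathrm{Mat}_{2 \times 2}(AK(1)_{**})$ is free as an $AK(1)_{**}$-module and thus has no $\tau$-torsion, so the difference vanishes.

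With $(cv)_* = \tau^{l-1} v_1 \cdot \mathrm{id}$ established, the fact that $AK(1)_{**}$ is a domain makes multiplication by $\tau^{l-1} v_1$ injective on the free module $AK(1)_{**}(S/l)$; the long exact sequence breaks into short exact sequences
$$0 \to AK(1)_{*-2l+2, *}(S/l) \xrightarrow{\tau^{l-1} v_1} AK(1)_{**}(S/l) \to AK(1)_{**}(C_{cv}) \to 0,$$
yielding $AK(1)_{**}(C_{cv}) \cong AK(1)_{**}(S/l)/(\tau^{l-1} v_1) = AK(1)_{**}(S/l)/(\tau^{l-1})$, using that $v_1$ is a unit. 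Non-vanishing is immediate from the nontrivial class of $1 \in AK(1)_{**}$. The main obstacle is pinning down $(cv)_*$; a degree count alone only determines it up to a scalar that could in principle involve $\tau$-torsion coefficients in the off-diagonal entries of the $2 \times 2$ matrix, and the $\tau$-realization comparison of Lemma \ref{taurealization} is exactly what upgrades this to a precise identification.
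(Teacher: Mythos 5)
Your proposal is correct and follows essentially the same route as the paper: identify $AK(1)_{**}(cv)$ as multiplication by $\tau^{l-1}v_1$ via Betti realization and a bidegree argument, then split the long exact sequence and quotient out the unit $v_1$. Your justification of the identification (the difference from $\tau^{l-1}v_1\cdot\mathrm{id}$ is $\tau$-primary torsion by Lemma \ref{taurealization}(3), hence zero in the torsion-free endomorphism module) is a slightly more explicit version of the paper's remark that there is only one map in that bidegree realizing to $v_1^{\mathrm{top}}$.
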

\begin{proof}
	The cofiber sequence \(S/l\overset{cv}\rightarrow S/l \rightarrow C_{cv}\) induces a long exact sequence in \(AK(1)\)-homology:
	\[
	...\rightarrow AK(1)_{p+(2l-2),q}(S/l) \overset{AK(1)_{**}(cv)}\longrightarrow AK(1)_{pq}(S/l)\rightarrow AK(1)_{pq}(C_{cv})\rightarrow ...
	\]
The map \(AK(1)_{**}(cv)\) must be given by multiplication with \(\tau^{l-1}v_1\), because Betti realization maps \(AK(1)_{**}(cv)\) to multiplication with \(v_1^{top}\) and there is only one map realizing to this in the appropiate bidegree. This map is injective but, unlike the topological case, no longer an isomorphism. Hence the long exact sequence splits into short exact sequences
\[0 \rightarrow AK(1)_{pq}(cS/l) \overset{\cdot \tau^{l-1}v_1}\longrightarrow AK(1)_{pq}(cS/l)\rightarrow AK(1)_{pq}(C_{cv}) \rightarrow 0\]
and because \(v_1\) is invertible, the last term is isomorphic to \(AK(1)_{**}(S/l)/(\tau^{l-1})\).
\end{proof}
\ \\
Because \(\mathcal{C}_{AK(1)}\) was defined by the vanishing of \(AK(1)\)-homology and \(AK(1)_{**}(C_{cv})\neq 0\) does not vanish, we have \(C_{cv}\notin \mathcal{C}_{AK(1)}\). On the other hand, we have shown that \(C_v\in \mathcal{C}_2\). Because \(R(C_{cv})=C_v\), this implies \(C_{cv}\in R^{-1}(\mathcal{C}_2)\). Therefore we can conclude the following corollary from the preceding lemma:
\begin{corol}
	The inclusion \[\mathcal{C}_{AK(1)}\subsetneq R^{-1}(\mathcal{C}_2)\] is proper.
\end{corol}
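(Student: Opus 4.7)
The plan is to exhibit the cone $C_{cv}$ from Lemma \ref{NonVanishingMorava} as an explicit witness that separates the two thick ideals, after confirming that the asserted inclusion actually holds. First I would verify $\mathcal{C}_{AK(1)}\subseteq R^{-1}(\mathcal{C}_2)$: this is part of Joachimi's general comparison results, but it can also be read off directly from Lemma \ref{taurealization}(1), which for $X\in\mathcal{SH}_{\C}^{qfin}$ (or its $l$-complete analogue) gives an isomorphism
\[
AK(1)_{**}(X)\underset{AK(1)_{**}}{\otimes}AK(1)_{**}[\tau^{-1}]\;\cong\;K(1)_{*}(R(X))\underset{K(1)_{*}}{\otimes}AK(1)_{**}[\tau^{-1}].
\]
Thus vanishing of $AK(1)_{**}(X)$ forces vanishing of $K(1)_{*}(R(X))$, so $X\in R^{-1}(\mathcal{C}_2)$.

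Next I would argue properness using $C_{cv}$. On the one hand, Lemma \ref{NonVanishingMorava} furnishes an isomorphism $AK(1)_{**}(C_{cv})\cong AK(1)_{**}(S/l)/(\tau^{l-1})$, which is nonzero; hence $C_{cv}\notin \mathcal{C}_{AK(1)}$. On the other hand, because $c$ is a strict symmetric monoidal right inverse to $R$ and preserves cofiber sequences, we have $R(C_{cv})=R(c(C_v))=C_v$, where $C_v$ is the cone of Adams's self map $v\colon \Sigma^{2l-2}S/l\to S/l$. Since $v$ induces an isomorphism in $K(1)$-homology, the long exact sequence of $K(1)_{*}$ collapses to give $K(1)_{*}(C_v)=0$, i.e.\ $C_v\in\mathcal{C}_2$. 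Consequently $C_{cv}\in R^{-1}(\mathcal{C}_2)$, and combining with the previous step shows the inclusion is strict.

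There is essentially no hard step here; everything has already been set up. The only subtlety worth double-checking while writing the proof is that both thick ideals $\mathcal{C}_{AK(1)}$ and $R^{-1}(\mathcal{C}_2)$ are considered inside the same ambient category and that $C_{cv}$ lies in that category (it is a finite cell spectrum, in particular in $\mathcal{SH}_{\C}^{qfin}$), so that the membership and non-membership assertions are comparable and together yield properness of the inclusion.
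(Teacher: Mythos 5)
Your argument is correct and matches the paper's: both exhibit $C_{cv}$ as the separating object, using Lemma \ref{NonVanishingMorava} for $C_{cv}\notin\mathcal{C}_{AK(1)}$ and $R(C_{cv})=C_v\in\mathcal{C}_2$ for $C_{cv}\in R^{-1}(\mathcal{C}_2)$. Your extra verification of the inclusion itself via Lemma \ref{taurealization}(1) is a sound (and welcome) supplement to the paper, which simply cites Joachimi for that containment.
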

\ \\
Furthermore we have \(C_{cv}=cC_v\in \text{thickid}(c\mathcal{C}_2)\).  Therefore \(cC_v\) is our desired counterexample and proves:
\begin{prop}
\(\text{thickid}(c\mathcal{C}_2) \not\subset \mathcal{C}_{AK(1)}\)
\end{prop}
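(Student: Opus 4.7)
The plan is to exhibit an explicit spectrum that lies in $\text{thickid}(c\mathcal{C}_2)$ but fails the defining vanishing condition of $\mathcal{C}_{AK(1)}$. The natural candidate is $cC_v$, where $C_v$ is the cone of the Adams self-map $v\colon \Sigma^{2l-2}S/l \to S/l$. Everything needed has essentially been set up in the preceding discussion; the proof is then a short assembly.

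First I would verify that $cC_v \in \text{thickid}(c\mathcal{C}_2)$. Since $K(1)_*(C_v)=0$ by construction of $v$, we have $C_v \in \mathcal{C}_2$, and hence $cC_v$ lies in $c\mathcal{C}_2$ by definition, and therefore in the thick ideal it generates. Next I would identify $cC_v$ with the motivic cone $C_{cv}$ of the map $cv\colon \Sigma^{2l-2,0}S/l \to S/l$, using that $c$ is a triangulated (in fact exact symmetric monoidal) functor and that $cS/l = S/l$ because $cl=l$.

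Second, I would invoke Lemma~\ref{NonVanishingMorava}, which provides the isomorphism
\[
AK(1)_{**}(C_{cv}) \cong AK(1)_{**}(S/l)/(\tau^{l-1}) \neq 0.
\]
The key motivic phenomenon making this work is that Betti realization forces the map $AK(1)_{**}(cv)$ to be multiplication by $\tau^{l-1}v_1$ rather than by $v_1$ itself, so the self-map is injective but no longer surjective on $AK(1)$-homology. Consequently $C_{cv}=cC_v \notin \mathcal{C}_{AK(1)}$.

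Putting these two observations together yields the required statement: $cC_v$ belongs to $\text{thickid}(c\mathcal{C}_2)$ but not to $\mathcal{C}_{AK(1)}$, so the claimed inclusion $\text{thickid}(c\mathcal{C}_2)\subset \mathcal{C}_{AK(1)}$ fails. There is no real obstacle here since the work was done in the preceding lemma; the only point requiring a small amount of care is the identification $cC_v \simeq C_{cv}$, which rests on the fact that $c$ preserves cofiber sequences and sends the Moore spectrum to the motivic Moore spectrum.
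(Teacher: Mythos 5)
Your proposal is correct and follows exactly the paper's argument: take $cC_v \simeq C_{cv}$ as the counterexample, note it lies in $\text{thickid}(c\mathcal{C}_2)$ since $C_v\in\mathcal{C}_2$, and invoke Lemma \ref{NonVanishingMorava} to see that its $AK(1)$-homology is the nonzero module $AK(1)_{**}(S/l)/(\tau^{l-1})$, so it is not in $\mathcal{C}_{AK(1)}$. No differences worth noting.
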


\begin{rem}The mistake on which the incorrect assertion is based occurs in \cite[Proposition 8.7.3]{JOA}. This proposition states that for a finite topological CW spectrum \(Y\), \(AK(n)^{**}(cY)=0\) if and only if \(K(n)_{*}(Y)=0\). In the proof of this proposition Joachimi shows that the differentials in the motivic Atiyah-Hirzebruch spectral sequence are determined by the differentials of the topological Atiyah-Hirzebruch spectral sequence, and that the \(E_2\)-page of the motivic spectral sequence is given by adjoining a generator \(\tau\) to each entry in the topological spectral sequence, where all entries are generated in motivic weight 0. The problem that now occurs is that the differentials in the motivic spectral sequence do not preserve the weight, but lower it. Hence a nontrivial differential can generate \(\tau\)-primary torsion in the spectral sequence. The above example shows that this in fact happens.
\end{rem}
\ \\
This argument can in fact be made for any topological spectrum \(X\in \mathcal{C}_{n+1}\setminus\mathcal{C}_{n+2}\). Any such spectrum has nontrivial \(
K(n)\)-homology and a self map \(v: \Sigma^m X\rightarrow X\) that induces multiplication by some power of \(v_n^{top}\). We know by \ref{taurealization} that the map \(AK(n)_{**}(cX)\rightarrow K(n)_*(X)\) induced by Betti realization is surjective and its kernel is exactly the \(\tau\)-primary torsion elements. In particular we know that \(AK(n)_{**}(cX)\neq 0\) , and the self map provides us with a motivic map \(cv\). This map induces multiplication by the same power of \(\tau^{l-1}v_n\) in \(AK(n)\)-homology -  up to a possible error term, which has to be \(\tau\)-primary torsion. We can eliminate this error term by taking sufficiently large \(l\)-fold powers of this map. We end up with a \(v_n^{top}\)-self map \(v'\) of \(X\) whose image \(cv'\) under the constant simplicial presheaf funtor \(c\) induces multiplication by some power of \(\tau^{l-1}v_n\) in \(AK(n)\)-homology. In particular, its cone has nonvanishing \(AK(n)\)-homology by the same argument as for our earlier counterexample and thus proves:
\begin{prop}
\(\text{thickid}(c\mathcal{C}_{n+1}) \not\subset \mathcal{C}_{AK(n)}\)
\end{prop}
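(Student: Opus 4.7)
The plan is to make the paragraph preceding the statement rigorous. Fix an odd prime $l$, $n \geq 1$, and pick a finite $l$-local topological cell spectrum $X \in \mathcal{C}_{n+1} \setminus \mathcal{C}_{n+2}$ equipped with a $v_n^{top}$-self map $v: \Sigma^{2j(l^n-1)}X \to X$ inducing multiplication by $(v_n^{top})^j$ on $K(n)_{*}(X)$ (such an $X$ and $v$ exist by the classical periodicity theorem of Hopkins and Smith). Since $c$ is symmetric monoidal, triangulated and preserves cells, $cX \in \mathcal{SH}_{\C}^{qfin}$ and the cofiber of $(cv)^{l^k}$ agrees with $c(C_{v^{l^k}})$. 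Since $v^{l^k}$ is again a $v_n^{top}$-self map, $C_{v^{l^k}} \in \mathcal{C}_{n+1}$, and hence $c(C_{v^{l^k}}) \in \mathrm{thickid}(c\mathcal{C}_{n+1})$. The entire task therefore reduces to showing that for $k$ sufficiently large, $AK(n)_{**}(c(C_{v^{l^k}}))\neq 0$, equivalently, that $AK(n)_{**}((cv)^{l^k})$ fails to be an isomorphism.

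To analyze $AK(n)_{**}(cv)$, I would compare it with the central endomorphism given by multiplication by $\tau^{j(l^n-1)} v_n^{j}$, which lives in the correct bidegree $(2j(l^n-1),0)$. Both endomorphisms realize to $(v_n^{top})^j$ under Betti realization (since $\tau \mapsto 1$ and $v_n \mapsto v_n^{top}$), so their difference $\varphi \defeq AK(n)_{**}(cv) - \tau^{j(l^n-1)}v_n^{j}\cdot \mathrm{id}$ lies in the kernel of $R_{\mathrm{End},cX}$. By Lemma \ref{taurealization}(3), $\varphi$ is $\tau$-primary torsion in $\mathrm{End}_{AK(n)_{**}}(AK(n)_{**}(cX))$; since this endomorphism ring is finitely generated over the Noetherian ring $AK(n)_{**}$ by Lemma \ref{FinGen}(2), there is a uniform exponent $M$ with $\tau^M\varphi = 0$.

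The next step is to iterate. The endomorphism $\beta \defeq \tau^{j(l^n-1)}v_n^{j}\cdot \mathrm{id}$ is central and $\varphi$ commutes with it; moreover the endomorphism ring is an $\mathbb{F}_l$-algebra, so the Frobenius identity $(\beta+\varphi)^{l^k} = \beta^{l^k} + \varphi^{l^k}$ holds. Combined with $\tau^M\varphi^{l^k} = (\tau^M\varphi)\varphi^{l^k-1} = 0$, this gives
\[
\tau^M\cdot AK(n)_{**}((cv)^{l^k}) \;=\; \tau^{M + jl^k(l^n-1)} v_n^{jl^k}\cdot \mathrm{id}.
\]
If $AK(n)_{**}((cv)^{l^k})$ were surjective, this identity would force multiplication by $\tau^{M + jl^k(l^n-1)}$ (which is $v_n^{jl^k}$ times a unit) to be surjective on $AK(n)_{**}(cX)$.

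Finally, I apply Nakayama: the graded ring $AK(n)_{**} \cong \mathbb{F}_l[\tau,v_n^{\pm 1}]$ is graded-local with maximal graded ideal $(\tau)$ (the quotient being the graded field $\mathbb{F}_l[v_n^{\pm 1}]$), and $AK(n)_{**}(cX)$ is finitely generated over it by Lemma \ref{FinGen}(1). By Lemma \ref{taurealization}(1), $AK(n)_{**}(cX)[\tau^{-1}] \cong K(n)_*(X)[\tau^{\pm 1}] \neq 0$, so $AK(n)_{**}(cX)\neq 0$, and Nakayama yields $\tau\cdot AK(n)_{**}(cX) \subsetneq AK(n)_{**}(cX)$. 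Hence multiplication by a positive power of $\tau$ cannot be surjective, giving the desired contradiction and showing $AK(n)_{**}(c(C_{v^{l^k}})) \neq 0$. The only slightly delicate step is the simultaneous handling of the torsion error term $\varphi$ and the scalar $\beta$; everything else is a matter of citing the structural lemmas already proved. I would expect this Freshman's-dream-plus-Nakayama combination to be the only place where one has to be careful.
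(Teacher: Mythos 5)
Your proposal follows the same route as the paper (which only sketches this argument in the paragraph preceding the proposition): choose $X\in\mathcal{C}_{n+1}\setminus\mathcal{C}_{n+2}$ with a $v_n^{top}$-self map, compare $AK(n)_{**}(cv)$ with the central scalar $\tau^{j(l^n-1)}v_n^j$, identify the difference as $\tau$-primary torsion via Lemma \ref{taurealization}(3), kill it with Frobenius powers, and conclude that the relevant endomorphism cannot be an isomorphism because multiplication by a positive power of $\tau$ is not surjective on the nonzero finitely generated module $AK(n)_{**}(cX)$. Your write-up is in fact more careful than the paper's sketch (which tacitly assumes the error term is divisible by $\tau$ so that its $l^k$-th powers vanish outright; your device of multiplying the whole identity by $\tau^M$ avoids having to justify that).

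There is one inference at the end that fails as literally stated. From $\tau^M\circ g=\tau^{M+N}\circ v_n^{jl^k}$ with $g=AK(n)_{**}((cv)^{l^k})$ surjective, you conclude that multiplication by $\tau^{M+N}$ is surjective on $M'=AK(n)_{**}(cX)$. What actually follows is only that its image coincides with that of $\tau^M$: taking images gives $\tau^M M'=\tau^{M+N}M'$, i.e.\ $\tau^N(\tau^M M')=\tau^M M'$, and you do not know that $\tau^M$ itself is surjective. The repair is immediate with the tools you already cite: apply your graded Nakayama (or the paper's degree argument from Lemma \ref{Lemma611}) to the finitely generated submodule $\tau^M M'$ to get $\tau^M M'=0$, so $M'$ is $\tau$-power torsion and $M'[\tau^{-1}]=0$, contradicting $M'[\tau^{-1}]\cong K(n)_*(X)[\tau,\tau^{-1}]\neq 0$ from Lemma \ref{taurealization}(1). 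With that substitution the argument is complete and matches the paper's.
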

Just as before, this also proves:
\begin{corol} The inclusion \[\mathcal{C}_{AK(n)}\subsetneq R^{-1}(\mathcal{C}_{n+1})\]
is proper.
\end{corol}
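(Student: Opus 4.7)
The plan is to split the statement into two assertions: (i) the inclusion $\mathcal{C}_{AK(n)} \subset R^{-1}(\mathcal{C}_{n+1})$ actually holds, and (ii) the counterexample produced in the previous proposition already witnesses that the inclusion is strict. Both are essentially immediate given the machinery already developed; the only nontrivial point is (i).

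For (i), I would argue as follows. Suppose $X \in \mathcal{SH}_{\C}^{qfin}$ satisfies $AK(n)_{**}(X) = 0$. Then after tensoring with $AK(n)_{**}[\tau^{-1}]$ over $AK(n)_{**}$ the result is still zero, so by the isomorphism in the first part of Lemma \ref{taurealization} we have
\[
K(n)_*(R(X)) \underset{K(n)_*}{\otimes} AK(n)_{**}[\tau^{-1}] = 0.
\]
Now Remark \ref{ExtScalarFlat} identifies $AK(n)_{**}[\tau^{-1}]$ as a free $K(n)_*$-module with basis the powers $\tau^k$, $-l^n+1 < k < l^n-1$, in particular as a faithfully flat $K(n)_*$-module. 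Hence $K(n)_*(R(X)) = 0$, i.e.\ $R(X) \in \mathcal{C}_{n+1}$, establishing the inclusion.

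For (ii), I would reuse the spectrum $C_{cv'}$ constructed in the paragraph preceding the previous proposition, where $X \in \mathcal{C}_{n+1}\setminus\mathcal{C}_{n+2}$ is topological and $v': \Sigma^m X \to X$ is a $v_n^{top}$-self map. Since $v'$ is a $v_n^{top}$-self map we have $K(n)_*(C_{v'}) = 0$, so $C_{v'} \in \mathcal{C}_{n+1}$. Because $c$ is triangulated and a right inverse to $R$, we have $R(C_{cv'}) \simeq C_{v'} \in \mathcal{C}_{n+1}$, which places $C_{cv'}$ in $R^{-1}(\mathcal{C}_{n+1})$. On the other hand, the computation given in Lemma \ref{NonVanishingMorava} and the discussion following it shows $AK(n)_{**}(C_{cv'}) \neq 0$, so $C_{cv'} \notin \mathcal{C}_{AK(n)}$. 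I expect no genuine obstacle here: the work is done in the preceding proposition, and the only subtlety worth flagging is the faithful flatness step in (i), which is needed to convert the vanishing of $AK(n)_{**}(X)$ (a statement about a $\tau$-torsion-sensitive module) into the vanishing of $K(n)_*(R(X))$ (a module on which $\tau$ acts as $1$).
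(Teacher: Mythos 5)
Your part (ii) is exactly the paper's argument: the paper's proof of this corollary is simply ``just as before,'' i.e.\ the cone $C_{cv'}$ of the image under $c$ of a suitable $v_n^{top}$-self map $v'$ on some $X\in\mathcal{C}_{n+1}\setminus\mathcal{C}_{n+2}$ has $R(C_{cv'})=C_{v'}\in\mathcal{C}_{n+1}$ but $AK(n)_{**}(C_{cv'})\neq 0$ by the $n$-analogue of Lemma \ref{NonVanishingMorava}. Where you go beyond the paper is part (i): the paper does not reprove the inclusion $\mathcal{C}_{AK(n)}\subset R^{-1}(\mathcal{C}_{n+1})$ at all, but imports it from Joachimi's thesis (it is quoted in the introduction as a known result), whereas you derive it from Lemma \ref{taurealization}(1) together with the faithful flatness of $AK(n)_{**}[\tau^{-1}]$ over $K(n)_*$ from Remark \ref{ExtScalarFlat}. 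That argument is sound --- a nonzero free module is faithfully flat, so vanishing of $K(n)_*(R(X))\otimes_{K(n)_*}AK(n)_{**}[\tau^{-1}]$ forces $K(n)_*(R(X))=0$ --- and it is a genuine (and self-contained) bonus, with the one caveat that Lemma \ref{taurealization} is stated for quasifinite cellular spectra, so your argument covers $\mathcal{C}_{AK(n)}$ only insofar as its objects lie in $\mathcal{SH}_{\C}^{qfin}$ (which is the setting the paper works in). For the properness itself your route and the paper's coincide.
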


\appendix

\ \\
\textit{Sven-Torben Stahn\\
Fachgruppe Mathematik und Informatik\\
Bergische Universität Wuppertal\\}
SvenTorbenStahn@gmail.com

\begin{thebibliography}{mustermarke}

\bibitem[Ada]{ADA} J. F. Adams, \emph{On the groups J(X) - IV}, Topology (1966), Volume 5, Issue 1, pp. 21–71 
\bibitem[Bor]{Bor} S. Borghesi, \emph{Algebraic Morava K-theories}, Inventiones Mathematicae (2003), Volume 151, Issue 2, pp. 381–413 
\bibitem[Bour]{BOUR} Nicolas Bourbaki, \emph{Commutative Algebra: Chapters 1-7}, Springer(1998)
\bibitem[DHS]{DHS} E. S. Devinatz, M. J. Hopkins, and J. H. Smith, \emph{Nilpotence and stable homotopy theory}, Annals of Mathematics (1988), Volume 128, Issue 2, pp. 207–242
\bibitem[DI]{DI} D. Dugger and D. C. Isaksen, \emph{The motivic Adams spectral sequence}, Geometry \& Topology (2010), Volume 14, Issue 2, pp. 967–1014
\bibitem[DI2]{DI2} D. Dugger and D. C. Isaksen, \emph{Motivic cell structures}, Algebraic \& Geometric Topology (2005), Volume 5, Issue 2, pp. 615-652
\bibitem[GL]{GL} T. Geisser and M. Levine, \emph{The Bloch-Kato conjecture and a theorem of Suslin-Voevodsky}, Journal fuer die reine und angewandte Math. (2001), Issue 530, pp. 55–103 
\bibitem[HKO]{HKO} P. Hu, I. Kriz, and K. M. Ormsby, \emph{Convergence of the motivic Adams spectral sequence}, J. K-Theory (2011), Volume 7, Issue 3, pp. 573–596 
\bibitem[HS]{HS} M. Hopkins and J. Smith,  \emph{Nilpotence and Stable Homotopy Theory II}, Annals of Mathematics (1998), Volume 148, Issue 1, pp. 1-49
\bibitem[Hor]{HOR} J. Hornbostel, \emph{Some comments on motivic nilpotence (with an appendix by M. Zibrowius)}, Trans. Amer. Math. Soc. (2018), Volume 370, pp. 3001-3015
\bibitem[Hov]{HOV} M. Hovey, \emph{Model categories}, Mathematical Surveys and Monographs, Band 63, AMS
\bibitem[Jar]{JAR} J.F. Jardine, \emph{Motivic symmetric spectra}, Documenta Mathematica (2000), Volume 5, pp. 445-552
\bibitem[Joa]{JOA} R. Joachimi, \emph{Thick ideals in equivariant and motivic
stable homotopy categories}, Dissertation at the Bergische Universität Wuppertal
\bibitem[LMS]{LMS} L. G. Jr. Lewis, J. P. May, and M. Steinberger, \emph{Equivariant Stable Homotopy Theory}, Lecture Notes in Mathematics, Volume 1213
\bibitem[Lev]{LEV} M. Levine, \emph{A comparison of motivic and classical stable homotopy theories}, Journal of Topology (2014), Volume 7, Issue 2, pp. 327–362
\bibitem[Mor]{Mor} F. Morel, \emph{On the motivic \(\pi_0\) of the sphere spectrum}, Axiomatic, enriched, and motivic homotopy theory (2004), pp. 219–260. NATO Sci. Ser. II Math. Phys. Chem., 131, Kluwer Acad.Publ., Dordrecht
\bibitem[MV]{MV} F. Morel and V. Voevodsky, \emph{$A^1$-homotopy theory of schemes}, Publications Mathématiques de l'IHÉS (1999), Volume 90, Issue 1, pp. 45-143 
\bibitem[MVW]{MVW} C. Mazza, V. Voevodsky, and C. Weibel, \emph{Lecture Notes on Motivic Cohomology}, Clay Mathematics Monographs (2006), Vol. 2
\bibitem[NSO]{NOS}N. Naumann, M. Spitzweck and P. A. Østvær, \emph{Motivic Landweber Exactness}, Documenta Math. (2009), Volume 14, pp. 551-593
\bibitem[RO]{OR} O. Röndigs and P. A. {\O}stv{\ae}r, \emph{Rigidity in motivic homotopy theory}, Mathematische Annalen (2008), Volume 341, Issue 3, pp. 651-675
\bibitem[Roen]{ROEN} O. Röndigs, \emph{The \(\eta\)-inverted sphere}, Preprint on the arxiv
\bibitem[Rav]{RAV} D. C. Ravenel, \emph{Complex cobordism and stable homotopy groups of spheres}, Pure and Applied Mathematics (1986/2003), Volume 121, Academic Press Inc.
\bibitem[Rav2]{RAV2} D. C. Ravenel, \emph{Nilpotence and periodicity in stable homotopy theory}, Annals of Mathematics Studies (1992), Volume 128, Princeton University Press
\bibitem[Str]{STR} N. Strickland, \emph{Products on MU-modules}, Transactions of the American Mathematical Society (1999), Volume 351, pp. 2569–2606
\bibitem[Voe]{VOE} V. Voevodsky, \emph{\(\mathbb{A}^1\)-homotopy theory}, Documenta Mathematica (1998), Proceedings of the International Congress of Mathematicians, Vol. I, pp. 579–604
\bibitem[Voe2]{VOE2} V. Voevodsky, \emph{Reduced power operations in motivic cohomology}, Publ. Math. Inst. Hautes Études Sci. (2003), Volume 98, pp. 1-57
\end{thebibliography}
\end{document}